\numberwithin{equation}{section}
\theoremstyle{plain}
\newtheorem{theorem}{Theorem}[section]
\newtheorem{lemma}[theorem]{Lemma}
\newtheorem{proposition}[theorem]{Proposition}
\newtheorem{corollary}[theorem]{Corollary}
\newtheorem{definition}[theorem]{Definition}
\newtheorem{example}[theorem]{Example}
\newtheorem{remark}[theorem]{Remark}
\newcommand{\depth}{\operatorname{depth}}
\newcommand{\Hom}{\operatorname{Hom}\nolimits}
\newcommand{\id}{\operatorname{id}}
\newcommand{\rad}{\operatorname{rad}\nolimits}
\renewcommand{\mod}{\mbox{-}\mathrm{mod}}
\newcommand{\Modp}{\mbox{-}\mathrm{Mod}_{\mathcal{P}}}
\newcommand{\modp}{\mbox{-}\mathrm{mod}_{\mathcal{P}}}
\newcommand{\Modfpd}{\mbox{-}\mathrm{Mod}_{\rm fpd}}
\newcommand{\modfpd}{\mbox{-}\mathrm{mod}_{\rm fpd}}
\newcommand{\smod}{\mbox{-}\underline{\mathrm{mod}}}
\newcommand{\sMod}{\mbox{-}\underline{\mathrm{Mod}}}
\newcommand{\Mod}{\mbox{-}\mathrm{Mod}}
\newcommand{\Gproj}{\mbox{-}\mathrm{Gproj}}
\newcommand{\GProj}{\mbox{-}\mathrm{GProj}}
\newcommand{\SGProj}{\mbox{-}\mathrm{SGProj}}
\newcommand{\SGproj}{\mbox{-}\mathrm{SGproj}}
\newcommand{\sGproj}{\mbox{-}\underline{\mathrm{Gproj}}}
\newcommand{\sGProj}{\mbox{-}\underline{\mathrm{GProj}}}
\newcommand{\proj}{\mbox{-}\mathrm{proj}}
\newcommand{\Proj}{\mbox{-}\mathrm{Proj}}
\newcommand{\Fpd}{\mbox{-}\mathrm{Fpd}}
\newcommand{\fpd}{\mbox{-}\mathrm{fpd}}
\newcommand{\Ext}{\operatorname{Ext}\nolimits}
\newcommand{\Gpd}{\operatorname{Gpd}}
\newcommand{\Gdim}{\operatorname{G\mbox{-}dim}}
\newcommand{\Pdim}{\operatorname{per.dim}}
\newcommand{\Findim}{\operatorname{Fin.dim}}
\newcommand{\findim}{\operatorname{fin.dim}}
\newcommand{\gldim}{\operatorname{gl.dim}}
\newcommand{\injdim}{\operatorname{inj.dim}}
\newcommand{\projdim}{\operatorname{proj.dim}}
\renewcommand{\L}{\Lambda}
\newcommand{\Lop}{\Lambda^{\rm op}}
\newcommand{\G}{\Gamma}
\newcommand{\Le}{\Lambda^\textrm{e}}
\newcommand{\Ge}{\Gamma^\textrm{e}}
\begin{document}
%
%%%%%%%%%%%%%%%%%%%%%%     Title         %%%%%%%%%%%%%%%%%%%%%%
%
\title[Periodic dimensions and...]{Periodic dimensions and some homological properties of eventually periodic algebras}

%%%%%%%%%%%%%%%%%%%%%%%%%%%%%%%%%%%%
%       author one information
%%%%%%%%%%%%%%%%%%%%%%%%%%%%%%%%%%%%
\author[S. Usui]{Satoshi Usui}
\address{%
Graduate School of Science, Department of Mathematics,
Tokyo University of Science, 
1-3, Kagurazaka, Shinjuku-ku, 
Tokyo, 162-8601,
Japan}
\address{%
Present address: Department of General Education, Tokyo Metropolitan College of Industrial Technology, 
8-17-1, Minamisenju, Arakawa-ku, 
Tokyo, 116-8523,
Japan}

\email{usui@metro-cit.ac.jp} 

%%%%%%%%%%%%%%%%%%%%%%%%%%%%%%%%%%%%
%       author one information
%%%%%%%%%%%%%%%%%%%%%%%%%%%%%%%%%%%%

\thanks{}

%%==================================%%
%% sample for unstructured abstract %%
%%==================================%%

%%%%%%%%%%%%%%%%%%%%%%%%%%%%%%%%%%%%
%           Abstract
%%%%%%%%%%%%%%%%%%%%%%%%%%%%%%%%%%%%
\begin{abstract}
For an eventually periodic module, we have the degree and the period of its first periodic syzygy. 
This paper studies the former under the name \lq\lq periodic dimension\rq\rq. 
We give a bound for the periodic dimension of an eventually periodic module with finite Gorenstein projective dimension. 
We also provide a method of computing the Gorenstein projective dimension of an eventually periodic module under certain conditions. 
Besides, motivated by recent results of Dotsenko, G{\' e}linas and Tamaroff and of the author, we determine the bimodule periodic dimension of an eventually periodic Gorenstein algebra.   
Another aim of this paper is to obtain some of the basic homological properties of eventually periodic algebras. 
We show that a lot of homological conjectures hold for this class of algebras. 
As an application, we characterize eventually periodic Gorenstein algebras in terms of bimodules Gorenstein projective dimensions. 
\end{abstract}
%%%%%%%%%%%%%%%%%%%%%%%%%%%%%%%%%%%%
%           Abstract
%%%%%%%%%%%%%%%%%%%%%%%%%%%%%%%%%%%%

\subjclass[2020]
{16E05, %Syzygies, resolutions, complexes in associative algebras	
16E10  	%Homological dimension in associative algebras
%Secondary 
16G10, %Representations of associative artin rings
16G50%Cohen-Macaulay modules in associative algebras
}
\keywords{eventually periodic modules, Gorenstein projective dimensions, eventually periodic algebras, Gorenstein algebras}

\maketitle
\tableofcontents

%%%%%%%%%%%%%%%%%%%%%%%%%%%%%%%%%%%%
%           Section ↓
%%%%%%%%%%%%%%%%%%%%%%%%%%%%%%%%%%%%
\section{Introduction} \label{Introduction}

Throughout this paper, we assume that all rings are associative and unital, and let $k$ denote a field. Moreover, by a module, we mean a left module unless otherwise stated.

Let $R$ be a left Noetherian semiperfect ring. Then any finitely generated $R$-module $M$ admits a minimal projective resolution $P_{\bullet} \rightarrow M$ with each $P_i$ finitely generated projective.
Such a resolution is uniquely determined up to isomorphism in the category of (chain) complexes over $R$.
We call the kernel of the $(n-1)$-th differential  $d_{n-1} : P_{n-1} \rightarrow P_{n-2}$ of the minimal projective resolution $P_{\bullet} \rightarrow M$ the {\it $n$-th syzygy} of $M$ and denote it by $\Omega_{R}^{n}(M)$.
The $R$-module $M$ is called {\it periodic} if  $\Omega_R^{p}(M)$ is isomorphic to $M$ as $R$-modules for some $p>0$. 
The least such $p$ is called the {\it period} of $M$.
We say that $M$ is {\it eventually periodic} if $\Omega_R^{n}(M)$ is periodic for some $n \geq 0$.

So far, various characterizations of eventually periodic modules have been provided in \cite{Avramov_1989,Bergh_2006,Croll_2013,Eisenbud_1980,Gasharov-Peeva_1990,kupper2010two,Usui_2022}. 
At the same time, the degree $n$ and the period $p$ of the first periodic syzygy $\Omega_R^{n}(M)$ of an eventually periodic $R$-module $M$ have been also examined:
for instance, in case $R$ is a commutative Noetherian local ring and $M$ has finite virtual projective dimension, Avramov \cite[Theorem 4.4]{Avramov_1989} obtained that the degree $n$ is bounded above by $\depth R -\depth M +1$, and the period $p$ is either $1$ or $2$, where $\depth N$ denotes the depth of an $R$-module $N$.
We point out that the degree $n$ is bounded below by $\depth R -\depth M$, which can be observed by using \cite[Lemma 1.2.6]{Avramov_1998}. 
This observation is valid for any eventually periodic modules over a commutative Noetherian local ring.
Also, we remark that the above result of Avramov recovers a well-known result of Eisenbud \cite[Theorem 4.1]{Eisenbud_1980} over complete intersections.
On the other hand, the author \cite[Theorem 2.4]{Usui_2022} showed that the Tate cohomology ring of the eventually periodic $R$-module $M$ has an invertible homogeneous element whose degree is the period $p$ of some periodic syzygy $\Omega_R^{n}(M)$ (without any additional assumption on $R$ and $M$).
Furthermore, there are results on the values of $n$ and $p$: for example, \cite[Theorem 2.5]{Bergh_2006}, \cite[Theorem 1.2]{Gasharov-Peeva_1990}, \cite[the proof of Corollary 6.4]{Dotsenko-Gelinas-Tamaroff_2023} and \cite[Proposition 4.2]{Usui21_No02}.

In this paper, we are concerned with the degrees of the first periodic syzygies of eventually periodic modules.
Since any module over a left perfect ring admits by definition a minimal projective resolution, we work with such rings $R$ and define eventually periodic $R$-modules by dropping the property \lq\lq finitely generated\rq\rq.  
We then introduce the notion of the {\it periodic dimension} of an $R$-module $M$. 
It is defined to be the infimum of the degrees $n$ of the periodic syzygies $\Omega_R^{n}(M)$ of $M$.
From the definition, a module being eventually periodic is equivalent to its periodic dimension being finite.
In this case, the value of the periodic dimension equals the degree of the first periodic syzygy. 
Moreover, it also follows that a module is periodic if and only if the module is of periodic dimension $0$, which justifies the terminology \lq\lq periodic dimension\rq\rq.

After providing some basic properties of periodic dimensions, we use the notion of {\it Gorenstein projective dimensions}, introduced by Holm \cite{HenrikHolm04}, to study periodic dimensions. 
As the first main result of this paper, we show that the periodic dimension of an eventually periodic module $M$ of finite Gorenstein projective dimension $r$ is either $r$ or $r+1$ (Theorem \ref{claim_6}).
Moreover, under the additional assumption that $M$ is finitely generated over a left artin ring, we will not only decide when the former case occurs but also describe how to obtain the Gorenstein projective dimension from the periodic dimension; see Corollary \ref{claim_49}.
Note that the corollary enables us to compute the Gorenstein projective dimensions of arbitrary finitely generated modules over CM-finite Gorenstein algebras; see Example \ref{example_5}.
Also, in the case of (both left and right) Noetherian semiperfect rings, we give an analogous result to the above main
result (Theorem \ref{claim_50}).

We also consider the bimodule periodic dimension of a finite dimensional eventually periodic algebra (i.e.~a finite dimensional algebra that is eventually periodic as a regular bimodule).
In the proof of \cite[Proposition 2.4]{Dotsenko-Gelinas-Tamaroff_2023}, Dotsenko, G{\' e}linas, and Tamaroff showed that finite dimensional monomial Gorenstein algebras $\L$ are eventually periodic, and their bimodule periodic dimensions are bounded above by $\injdim_{\L}\L+1$, where $\injdim_{\L}\L$ stands for the injective dimension of $\L$ over $\L$.
Also, the author \cite[Proposition 4.2]{Usui21_No02} obtained that the bimodule periodic dimensions of finite dimensional eventually periodic Gorenstein algebras $\L$ are bounded below by $\injdim_{\L}\L$. 
These two results are the motivation for the second main result of this paper that the bimodule periodic dimension of an eventually periodic Gorenstein algebra $\L$ is either $\injdim_{\L}\L$ or $\injdim_{\L}\L+1$ (Theorem \ref{claim_10}).
We remark that our second main result is a consequence of the first one.

This paper also focuses on a homological aspect of finite dimensional eventually periodic algebras.
It turns out that many homological conjectures such as the periodicity conjecture, the finitistic dimension conjecture, the Gorenstein symmetric conjecture, and the Auslander conjecture hold for this class of algebras (Propositions \ref{claim_3}, \ref{claim_18}, \ref{claim_19} and \ref{claim_28}).
As its application, we obtain the third main result of this paper that a finite dimensional eventually periodic algebra is Gorenstein if and only if its bimodule Gorenstein projective dimension is finite (Theorem \ref{claim_38}).
This illustrates that our second main result is the best possible in the sense that the algebras for which we can describe the bimodule periodic dimensions as a consequence of the first main result are precisely eventually periodic Gorenstein algebras (cf.~Remark \ref{remark_1}).

This paper is organized as follows.
In Section \ref{Preliminaries}, we recall some basic facts that are used in this paper.
In Section \ref{Periodic dimension}, we define and study the periodic dimensions of modules over left perfect rings.
In Section 4, we concentrate on the bimodule periodic dimensions of finite dimensional eventually periodic algebras.
In Section 5, we continue to work with finite dimensional eventually periodic algebras and investigate them from a homological point of view.

%%%%%%%%%%%%%%%%%%%%%%%%%%%%%%%%%%%%
%       Subsection ↓
%%%%%%%%%%%%%%%%%%%%%%%%%%%%%%%%%%%%
\subsection*{Conventions and notation}
For any ring $R$ and any $R$-module $M$, we denote by $R\Mod$ (resp.\ $R\mod$) the category of (resp.\ finitely generated) $R$-modules, by $\gldim R$ the global dimension of $R$, and by $\projdim_{R}M$ the projective dimension of $M$.
We define four full subcategories of $R\Mod$ as follows:
\begin{align*}
    &R\Proj := \left\{\, M \in R\Mod \mid  M \mbox{ is projective} \,\right\}; \\
    &R\Fpd := \left\{\, M \in R\Mod \mid \projdim_R M < \infty \,\right\}; \\
    &R\Modp := \left\{\, M \in R\Mod \mid M \mbox{ has no non-zero direct summand in $R\Proj$} \,\right\}; \\
    &R\Modfpd := \left\{\, M \in R\Mod \mid M \mbox{ has no non-zero direct summand in $R\Fpd$} \,\right\}.
\end{align*}
Similarly, the four full subcategories $R\proj,$ $R\fpd,$ $R\modp$ and $R\modfpd$ of $R\mod$ are defined.
For a collection $\mathcal{X}$ of $R$-modules, we denote by ${}^{\perp}\mathcal{X}$ the full subcategory of $R\Mod$ given by 
\begin{align*} 
   {}^{\perp}\mathcal{X} := \left\{\, M \in R\Mod \mid  \Ext_{R}^{i}(M, X) = 0 \mbox{ for all } i > 0  \mbox{ and all } X \in \mathcal{X} \,\right\}.
\end{align*} 
Finally, for any $i \in \mathbb{Z}$ and any complex $X_\bullet$, we denote by $\Omega_{i}(X_\bullet)$ the cokernel of the $(i+1)$-th  differential $d_{i+1}: X_{i+1} \rightarrow X_{i}$.
%%%%%%%%%%%%%%%%%%%%%%%%%%%%%%%%%%%%
%       Subsection ↑
%%%%%%%%%%%%%%%%%%%%%%%%%%%%%%%%%%%%

%%%%%%%%%%%%%%%%%%%%%%%%%%%%%%%%%%%%
%           Section ↑
%%%%%%%%%%%%%%%%%%%%%%%%%%%%%%%%%%%%

%%%%%%%%%%%%%%%%%%%%%%%%%%%%%%%%%%%%
%           Section ↓
%%%%%%%%%%%%%%%%%%%%%%%%%%%%%%%%%%%%
\section{Preliminaries} \label{Preliminaries} 
%%%%%%%%%%%%%%%%%%%%%%%%%%%%%%%%%%%%
%%%%%%%%%%%%%%%%%%%%%%%%%%%%%%%%%%%%

In this section, we recall some basic facts related to Gorenstein projective modules, Gorenstein projective dimensions, and Gorenstein rings.

%%%%%%%%%%%%%%%%%%%%%%%%%%%%%%%%%%%%
%%%%%%%%%%%%%%%%%%%%%%%%%%%%%%%%%%%%
\subsection{Gorenstein projective modules} 
%%%%%%%%%%%%%%%%%%%%%%%%%%%%%%%%%%%%
%%%%%%%%%%%%%%%%%%%%%%%%%%%%%%%%%%%%
Let $R$ be a ring.
Recall that an acyclic complex $T_\bullet$ of projective $R$-modules is {\it totally acyclic} if $\Hom_{R}(T_\bullet, Q)$ is acyclic for any $Q \in R\Proj$.
An $R$-module $M$ is called {\it Gorenstein projective} \cite{Enochs-Jenda_1995_MathZ} if there exists a totally acyclic complex $T_\bullet$ such that $\Omega_{0}(T_\bullet) \cong M$ in $R\Mod$. 
For example, projective modules are Gorenstein projective.
As will be seen in the next subsection, if $R$ is a Noetherian ring, then finitely generated Gorenstein projective $R$-modules are precisely {\it totally reflexive $R$-modules} \cite{Avramov-Martsinkovsky_2002}.

Let $n$ be a positive integer. 
Following \cite[Definition 2.1]{Bennis-Mahdou_2009}, we say that the $R$-module $M$ is {\it $n$-strongly Gorenstein projective} if there exists an exact sequence of $R$-modules
\begin{align*}
     0 \rightarrow M \rightarrow P_{n-1} \rightarrow \cdots \rightarrow P_{0} \rightarrow M \rightarrow 0
\end{align*}
with each $P_{i}$ projective such that $\Hom_{R}(-, Q)$ leaves the sequence exact for any $Q \in R\Proj$.
Recall that the {\it stable category} $R\sMod$ of $R\Mod$ is the category whose objects are the same as $R\Mod$ and morphisms are given by $\underline{\Hom}_{\L}(M, N) := \Hom_{\L}(M, N)/\mathcal{P}(M, N)$,
where $\mathcal{P}(M, N)$ is the group of morphisms from $M$ to $N$ factoring through a projective module.
We then observe that $M$ is $n$-strongly Gorenstein projective if and only if $\Omega_{n}(P_\bullet) \cong M$ in $R\sMod$ for some (hence for any) projective resolution $P_\bullet \rightarrow M$ of $M$, and $\Ext_{R}^{i}(M, P) = 0$ for all $i$ with  $1 \leq i \leq n$ and all $P \in R\Proj$ (cf.~\cite[Proposition 2.2.17]{X-WChen_2017}).

The category $R\GProj$ of Gorenstein projective $R$-modules is a Frobenius category whose projective objects are precisely projective $R$-modules, so that the stable category $R\sGProj$ of $R\GProj$ carries a structure of a triangulated category (cf.~\cite[Proposition 2.1.11]{X-WChen_2017}).
If $\Sigma$ denotes the shift functor on $R\sGProj$, then any totally acyclic complex  $T_\bullet$ associated with a Gorenstein projective $R$-module $M$ has the property that $\Sigma^{i}M = \Omega_{-i}(T_\bullet)$ for all $i \in \mathbb{Z}$.
Moreover, we know by \cite[Lemma 2.3.4]{Veliche_2006} that $\Sigma^{-i}M = \Omega_{i}(P_\bullet)$ for any $i  \geq 0$ and any projective resolution $P_\bullet \rightarrow M$ of $M$.
On the other hand, one observes that $R\GProj \subseteq{{}^{\perp}(R\Proj)} = {}^{\perp}(R\Fpd)$.

We denote by $n\mbox{-}R\SGProj$ the category of $n$-strongly Gorenstein projective $R$-modules.
It follows from \cite[Proposition 2.5]{Bennis-Mahdou_2009} that the following inclusions hold for each $n>0$:
\begin{align*}
    R\Proj \subseteq{1\mbox{-}R\SGProj}  \subseteq{n\mbox{-}R\SGProj} \subseteq{R\GProj}.
\end{align*}

In case $R$ is a Noetherian ring, one can deduce analogous results as in the above for $R\Gproj$  and $n\mbox{-}R\SGproj$, where $R\Gproj$ (resp.~$n\mbox{-}R\SGproj$) stands for the category of finitely generated Gorenstein projective (resp.~$n$-strongly Gorenstein projective) $R$-modules.

%%%%%%%%%%%%%%%%%%%%%%%%%%%%%%%%%%%%
%%%%%%%%%%%%%%%%%%%%%%%%%%%%%%%%%%%%
\subsection{Gorenstein projective dimensions} \label{GorensteinProjectiveDimensions}
%%%%%%%%%%%%%%%%%%%%%%%%%%%%%%%%%%%%
%%%%%%%%%%%%%%%%%%%%%%%%%%%%%%%%%%%%
Let $R$ be a ring.
Following \cite[Definition 2.8]{HenrikHolm04}, we define the {\it Gorenstein projective dimension} $\Gpd_{R}M$ of an $R$-module $M$ by the infimum of the length $n$ of an exact sequence of $R$-modules 
\begin{align*}
0 \rightarrow G_n \rightarrow \cdots \rightarrow G_1 \rightarrow G_0 \rightarrow M \rightarrow 0 
\end{align*}
with each $G_i$ Gorenstein projective. 
From the definition, there is an inequality 
\begin{align*}
    \Gpd_R M \leq \projdim_R M.
\end{align*}
We know from  \cite[Proposition 2.27]{HenrikHolm04} that the equality holds if $M$ has finite projective dimension.
Moreover, it was proved in \cite[Theorem 2.20]{HenrikHolm04} that if $M$ has finite Gorenstein projective dimension, then we have
\begin{align} \label{eq_2}
    \Gpd_{R}M = \sup \left\{ i \geq 0 \mid \Ext_{R}^{i}(M, Q) \not= 0 \mbox{ for some } Q \in R\Proj \right\}.
\end{align}

Now, suppose that $R$ is a Noetherian ring.
Recall from \cite{Avramov-Martsinkovsky_2002} that the {\it Gorenstein dimension} $\Gdim_{R}M$ of a finitely generated $R$-module $M$ is defined to be the infimum of the length $n$ of an exact sequence of finitely generated $R$-modules \[ 0 \rightarrow X_n \rightarrow \cdots \rightarrow X_1 \rightarrow X_0 \rightarrow M \rightarrow 0 \] with each $X_i$ totally reflexive. 
It was observed in \cite[2.4.1]{Veliche_2006} that  
\begin{align} \label{eq_11}
    \Gdim_{R}M = \Gpd_{R}M
\end{align}
for any $M\in R\mod$.

%%%%%%%%%%%%%%%%%%%%%%%%%%%%%%%%%%%%
%%%%%%%%%%%%%%%%%%%%%%%%%%%%%%%%%%%%
\subsection{Gorenstein rings}  \label{Preliminaries_GorensteinRings}
%%%%%%%%%%%%%%%%%%%%%%%%%%%%%%%%%%%%
%%%%%%%%%%%%%%%%%%%%%%%%%%%%%%%%%%%%

A Noetherian ring $R$ is called {\it Gorenstein} (or {\it Iwanaga-Gorenstein})  if $R$ has finite injective dimension as a left and as a right $R$-module (cf.~\cite{Iwanaga_1980,Buchweitz_1986}).
It follows from \cite[Lemma A]{Zaks69} that any Gorenstein ring $R$ satisfies $\injdim_{R}R = \injdim_{R^{\rm op}}R$.
We hence call a Gorenstein ring $R$
with $\injdim_{R}R = d$ a {\it $d$-Gorenstein} ring.
$0$-Gorenstein rings are just self-injective rings.
Also, finitely generated Gorenstein projective modules over a Gorenstein ring are sometimes called {\it maximal Cohen-Macaulay modules} \cite{Buchweitz_1986}. 
The following two results which will be used in this paper are due to Veliche \cite[2.4.2]{Veliche_2006} and Dotsenko, G\'{e}linas and Tamaroff \cite[Proposition 2.4]{Dotsenko-Gelinas-Tamaroff_2023}.

%%%%%%%%%%%%%%%%%%%%%%%%%%%%%%%%%%%%
%           statement ↓
%%%%%%%%%%%%%%%%%%%%%%%%%%%%%%%%%%%%
\begin{proposition}[Veliche] \label{Veliche_2006_2.4.2}
Let $R$ be a Noetherian ring and $d$ a non-negative integer.
Then the following conditions are equivalent.
\begin{enumerate}
    \item $\injdim_{R}R \leq d$ and $\injdim_{R^{\rm op}}R \leq d$.
    \item $\Gpd_{R} M \leq d$ for any $R$-module $M$.
    \item $\Gdim_{R} M \leq d$ for any finitely generated $R$-module $M$.
\end{enumerate}
\end{proposition}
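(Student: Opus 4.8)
The plan is to prove Proposition~\ref{Veliche_2006_2.4.2} by establishing the cycle of implications $(1)\Rightarrow(2)\Rightarrow(3)\Rightarrow(1)$, using the characterization of Gorenstein projective dimension via vanishing of Ext recalled in \eqref{eq_2} and the identity \eqref{eq_11} relating $\Gdim$ and $\Gpd$ in the finitely generated Noetherian setting. The implication $(2)\Rightarrow(3)$ is immediate: if every $R$-module $M$ has $\Gpd_R M \leq d$, then in particular this holds for every finitely generated $M$, and then \eqref{eq_11} gives $\Gdim_R M = \Gpd_R M \leq d$.

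\smallskip

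For $(1)\Rightarrow(2)$, I would first reduce to showing that every module $M$ with finite Gorenstein projective dimension satisfies $\Gpd_R M \le d$, and then argue that under hypothesis $(1)$ every module has finite Gorenstein projective dimension. For the first half: given $M$ with $\Gpd_R M < \infty$, formula \eqref{eq_2} says $\Gpd_R M = \sup\{ i \ge 0 \mid \Ext_R^i(M,Q) \ne 0 \text{ for some projective } Q\}$. Taking a projective resolution $P_\bullet \to M$, we have $\Ext_R^{i}(M,Q) \cong \Ext_R^{1}(\Omega_R^{i-1}(M), Q)$ for $i \ge 1$, and one bounds this using a projective resolution of $Q$ of length $\le \injdim_R R \le d$ together with injective-dimension (equivalently, since $R$ is Noetherian, the dual argument over $R^{\rm op}$) to force $\Ext_R^{i}(M,Q)=0$ for $i > d$; hence $\Gpd_R M \le d$. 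For the second half — that hypothesis $(1)$ forces \emph{finite} Gorenstein projective dimension for every module — one takes a $(d)$-th syzygy $\Omega_R^{d}(M)$ in a projective resolution and shows it is Gorenstein projective: the condition $\injdim_{R^{\rm op}} R \le d$ implies $\Ext_R^{i}(\Omega_R^{d}(M), Q) = 0$ for all $i>0$ and all projective $Q$, and the condition $\injdim_R R \le d$ lets one splice the given projective resolution together with a coresolution to produce a totally acyclic complex through $\Omega_R^{d}(M)$; so $\Gpd_R M \le d$.

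\smallskip

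For $(3)\Rightarrow(1)$, I would run the preceding argument in reverse at the level of finitely generated modules and then pass to injective dimensions. Concretely, $\Gdim_R M \le d$ for all $M \in R\mod$ means (via \eqref{eq_11}) that $\Gpd_R M \le d$ for all such $M$; applying \eqref{eq_2} to $M = R/\mathfrak{a}$ for appropriate $\mathfrak{a}$, or more cleanly using that $R$ as a right module has $\injdim_{R^{\rm op}} R = \sup\{ i \mid \Ext_{R^{\rm op}}^{i}(S, R) \ne 0 \text{ for } S \text{ simple}\}$, one deduces $\Ext_R^{i}(M,R) = 0$ for $i > d$ and all finitely generated $M$, which is exactly $\injdim_{R^{\rm op}} R \le d$; the symmetric computation over $R^{\rm op}\mod$ (noting that the hypothesis is left-right symmetric once phrased via $\Gdim$, or simply applying the already-proved equivalence to $R^{\rm op}$) yields $\injdim_R R \le d$.

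\smallskip

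I expect the main obstacle to be the careful bookkeeping in $(1)\Rightarrow(2)$ when $M$ is an \emph{arbitrary} (not finitely generated) module: one must verify that a high syzygy is genuinely Gorenstein projective, i.e.\ produce an honest totally acyclic complex of projectives rather than merely checking Ext-vanishing on one side, and this requires the two-sided hypothesis $(1)$ in an essential way (the left injective-dimension bound builds the "coresolution half" of the totally acyclic complex, while the right one, via Noetherianity and standard duality, supplies the needed vanishing $\Hom_R(T_\bullet, Q)$ is acyclic). Making that splicing argument rigorous — and confirming that the relevant complex stays within projectives and remains acyclic after applying $\Hom_R(-,Q)$ — is the technical heart; the rest is formal manipulation with \eqref{eq_2} and \eqref{eq_11}. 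Since this result is attributed to Veliche \cite[2.4.2]{Veliche_2006}, I would in practice cite that source for the splicing step and only sketch the Ext-vanishing reductions here.
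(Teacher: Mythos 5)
First, a point of comparison: the paper does not prove Proposition \ref{Veliche_2006_2.4.2} at all — it is quoted verbatim from Veliche \cite[2.4.2]{Veliche_2006} — so there is no internal argument for your sketch to match, and deferring the hard splicing step to that source is in itself consistent with what the paper does. Judged on its own terms, your outline has a sensible skeleton: the cycle $(1)\Rightarrow(2)\Rightarrow(3)\Rightarrow(1)$, with $(2)\Rightarrow(3)$ immediate from \eqref{eq_11}, is fine. But the two parts you do argue contain a systematic confusion of sides and, in $(3)\Rightarrow(1)$, a genuine gap.

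Concretely: in $(3)\Rightarrow(1)$, the vanishing $\Ext_R^i(M,R)=0$ for all finitely generated left modules $M$ and all $i>d$ is, by Baer's criterion applied to the $d$-th cosyzygy of ${}_RR$, exactly the statement $\injdim_R R\le d$ — the \emph{left} bound, not $\injdim_{R^{\rm op}}R\le d$ as you write (and testing injective dimension against simple modules is not available over an arbitrary Noetherian ring; that device needs an artin algebra or a local ring). The second inequality cannot then be obtained ``by symmetry'' or ``by applying the already-proved equivalence to $R^{\rm op}$'': condition (3) is a one-sided hypothesis, and the implications you have at that point only run from (1) to (3), so you would need (3) for $R^{\rm op}$, which is precisely what is not given. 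Passing from the bound on $\Gdim$ of finitely generated \emph{left} modules to $\injdim_{R^{\rm op}}R\le d$ is the nontrivial content of this direction (classically handled via Auslander--Bridger transpose duality for totally reflexive modules, or Iwanaga-type results on flat dimensions of injectives), and it must be argued or explicitly cited. The same sides confusion occurs in $(1)\Rightarrow(2)$: it is the \emph{left} hypothesis $\injdim_R R\le d$ together with left Noetherianity (direct sums of injectives are injective, so projective left modules have injective dimension $\le d$) that yields $\Ext_R^i(\Omega_R^d(M),Q)=0$ for $i>0$ and $Q\in R\Proj$, whereas the \emph{right} hypothesis is what is needed to build the coresolution half of the totally acyclic complex — the opposite of your attribution. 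Since the paper simply cites Veliche, the practical fix is to do likewise; a self-contained proof would have to supply the right-sided bound in $(3)\Rightarrow(1)$ and the coresolution construction in $(1)\Rightarrow(2)$ honestly.
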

%%%%%%%%%%%%%%%%%%%%%%%%%%%%%%%%%%%%
%           statement ↑
%%%%%%%%%%%%%%%%%%%%%%%%%%%%%%%%%%%%

%%%%%%%%%%%%%%%%%%%%%%%%%%%%%%%%%%%%
%           statement ↓
%%%%%%%%%%%%%%%%%%%%%%%%%%%%%%%%%%%%
\begin{proposition}[Dotsenko-Gélinas-Tamaroff]
\label{Dotsenko-Gelinas-Tamaroff_2023_Proposition 2.4}
Let $\L$ be an artin algebra with Jacobson radical $J(\L)$.
Then $\L$ is a Gorenstein algebra if and only if $\L/J(\L)$ has finite Gorenstein dimension as a $\L$-module. 
In this case, we have $\Gdim_{\L}\L/J(\L) = \injdim_{\L}\L$.
\end{proposition}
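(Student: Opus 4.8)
The plan is to reduce the statement to two standard facts about the artin algebra $\L$ and then feed the outcome into Veliche's criterion (Proposition~\ref{Veliche_2006_2.4.2}). Write $\L/J(\L)\cong\bigoplus_{i=1}^{m}S_i^{\,d_i}$, where $S_1,\dots,S_m$ are the pairwise non-isomorphic simple $\L$-modules; then every simple module is a direct summand of $\L/J(\L)$, so $\Gdim_\L S_i\le\Gdim_\L\L/J(\L)$ for all $i$, since a direct summand has Gorenstein projective dimension at most that of the whole module and $\Gpd$ agrees with $\Gdim$ on finitely generated modules by \eqref{eq_11}. The \textbf{first fact} I will use is that over an artin algebra the injective dimension of a finitely generated module $N$ is detected on $\L/J(\L)$: because finitely generated modules have finite composition length, a dimension shift in the long exact sequences of $\Ext_\L^{\bullet}(-,N)$ shows that $\Ext_\L^{n+1}(S,N)=0$ for all simple $S$ already forces $\Ext_\L^{n+1}(M,N)=0$ for all finitely generated $M$, whence
\[
\injdim_\L N=\sup\{\,i\ge 0\mid \Ext_\L^{i}(\L/J(\L),N)\ne 0\,\}.
\]
The \textbf{second fact} is Holm's formula \eqref{eq_2} together with \eqref{eq_11}: if $\Gdim_\L M<\infty$, then $\Gdim_\L M=\sup\{\,i\ge 0\mid\Ext_\L^{i}(M,Q)\ne 0\text{ for some }Q\in\L\Proj\,\}$, and since $\Ext_\L^{i}(M,-)$ commutes with coproducts when $M$ is finitely generated and every projective is a direct summand of a free module, one may replace $\L\Proj$ by $\{\L\}$ here.

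For the forward implication, I would take $\L$ to be $d$-Gorenstein. Then condition~(1) of Proposition~\ref{Veliche_2006_2.4.2} holds, hence so does condition~(3): $\Gdim_\L M\le d$ for every finitely generated $M$; in particular $\Gdim_\L\L/J(\L)<\infty$. The ``in this case'' equality then falls out by applying the two facts with $M=\L/J(\L)$ and $N=\L$:
\[
\Gdim_\L\L/J(\L)=\sup\{\,i\ge 0\mid\Ext_\L^{i}(\L/J(\L),\L)\ne 0\,\}=\injdim_\L\L .
\]

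For the converse, assume $d:=\Gdim_\L\L/J(\L)<\infty$, so $\Gdim_\L S\le d$ for every simple $S$ by the observation above. I would then run an induction on composition length: given a short exact sequence $0\to M'\to M\to M''\to 0$ of finitely generated modules with $\Gdim_\L M'\le d$ and $\Gdim_\L M''\le d$, the class of modules of finite Gorenstein projective dimension is closed under extensions (\cite{HenrikHolm04}), so $\Gdim_\L M<\infty$; then the exactness of $\Ext_\L^{n+1}(M'',Q)\to\Ext_\L^{n+1}(M,Q)\to\Ext_\L^{n+1}(M',Q)$ together with \eqref{eq_2} gives $\Gdim_\L M\le\max\{\Gdim_\L M',\Gdim_\L M''\}\le d$. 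Applying this along a composition series yields $\Gdim_\L M\le d$ for \emph{all} finitely generated $M$, so by Proposition~\ref{Veliche_2006_2.4.2} (implication (3)$\Rightarrow$(1)) we get $\injdim_\L\L\le d$ and $\injdim_{\Lop}\L\le d$; hence $\L$ is Gorenstein, and the displayed equality follows as in the forward direction.

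The step I expect to be the crux is this converse, specifically the upgrade from finiteness of the \emph{single} invariant $\Gdim_\L\L/J(\L)$ to a \emph{uniform} bound on $\Gdim_\L M$ over all finitely generated $M$: this is precisely what Veliche's criterion needs, and it is what yields the two-sided finiteness of $\injdim\L$ (one-sided finiteness by itself would only be the still-open Gorenstein symmetry statement). It rests on three ingredients --- that $\L/J(\L)$ carries every simple as a direct summand, that finite Gorenstein projective dimension is stable under extensions, and that composition series are finite. The remaining points, namely behaviour under finite direct sums, the reduction of test objects from arbitrary projectives to $\L$, and the $\Ext$ dimension shifts, are routine bookkeeping.
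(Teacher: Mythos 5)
Your argument is correct. Note, however, that the paper itself does not prove this statement: it is imported verbatim from Dotsenko--G\'elinas--Tamaroff and used as a black box, so there is no internal proof to compare against; what you have written is a self-contained derivation, and it is sound. Your route --- detect $\injdim_{\L}\L$ on the simples via a Baer-criterion/composition-length dimension shift, compute $\Gdim_{\L}\L/J(\L)$ by Holm's formula \eqref{eq_2} with $\eqref{eq_11}$ and with test objects reduced from $\L\Proj$ to $\L$ (legitimate, since $\Ext^i_{\L}(M,-)$ commutes with direct sums for $M$ finitely generated over a Noetherian ring), and then upgrade $\Gdim_{\L}\L/J(\L)<\infty$ to a uniform bound on all finitely generated modules so that Veliche's implication (3)$\Rightarrow$(1) delivers the two-sided finiteness of $\injdim \L$ --- is exactly the standard way to prove this, and you correctly identify the converse's uniformization step as the crux (one-sided finiteness alone would only amount to the open Gorenstein symmetry statement, which Proposition~\ref{Veliche_2006_2.4.2} circumvents). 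The only step worth pinning down more carefully is the appeal to ``closure under extensions of the class of modules of finite Gorenstein projective dimension'': this is true, but if you want to avoid hunting for the precise reference you can bypass it, since the horseshoe lemma applied to projective resolutions of $M'$ and $M''$ gives a short exact sequence of $d$-th syzygies $0\to\Omega^d_{\L}(M')\to\Omega^d_{\L}(M)\to\Omega^d_{\L}(M'')\to 0$ whose outer terms are Gorenstein projective, and $\L\GProj$ is closed under extensions (it is projectively resolving), which yields $\Gpd_{\L}M\le d$ directly without invoking \eqref{eq_2} at that point.
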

%%%%%%%%%%%%%%%%%%%%%%%%%%%%%%%%%%%%
%           statement ↑
%%%%%%%%%%%%%%%%%%%%%%%%%%%%%%%%%%%%

It follows from Proposition \ref{Veliche_2006_2.4.2} that for a self-injective ring $R$, we have  
\begin{align*}
    R\GProj = R\Mod \mbox{\quad  and   \quad} R\Gproj = R\mod.
\end{align*}

Let $\L$ be a finite dimensional $d$-Gorenstein algebra (over the field $k$).
Then \cite[Lemma 6.1]{benson_iyengar_krause_pevtsova_2020} implies that the enveloping algebra $\Le := \L \otimes_k \Lop$ of $\L$ is a finite dimensional $(2d)$-Gorenstein algebra.
We note that for any finite dimensional algebra $\L$, $\Le$-modules can be identified with $\L$-bimodules on which the ground field $k$ acts centrally.

%%%%%%%%%%%%%%%%%%%%%%%%%%%%%%%%%%%%
%           Section ↑
%%%%%%%%%%%%%%%%%%%%%%%%%%%%%%%%%%%%

%%%%%%%%%%%%%%%%%%%%%%%%%%%%%%%%%%%%
%           Section ↓
%%%%%%%%%%%%%%%%%%%%%%%%%%%%%%%%%%%%
\section{Periodic dimensions} \label{Periodic dimension}
%%%%%%%%%%%%%%%%%%%%%%%%%%%%%%%%%%%%
In this section, we introduce the notion of the periodic dimension of a module and present some of its basic properties. 
Moreover, we inspect the periodic dimension of an eventually periodic module having finite Gorenstein projective dimension.
Throughout this section, let $R$ be a left perfect ring unless otherwise stated.
We start with a quick review of syzygies.

Recall that the {\it syzygy} $\Omega_{R}(M)$ of an $R$-module $M$ is the kernel of a projective cover $P \rightarrow M$ with $P\in R\Proj$.
It is known that $\Omega_{R}(M)$ is uniquely determined up to isomorphism.
For $n \geq 0$, we set $\Omega_{R}^{n}(M) := \Omega_{R}(\Omega_{R}^{n-1}(M))$, called the {\it $n$-th syzygy} of $M$, where it is understood that $\Omega_{R}^{0}(M) := M$.
Observe that for any family $\{ M_i \}_{i \in I}$ of $R$-modules, there exists an isomorphism in $R\Mod$
\begin{align} \label{eq_9}
    \Omega_R\left(\bigoplus_{i\in I} M_i\right) \cong \bigoplus_{i\in I} \Omega_R(M_i).
\end{align}
On the other hand, there exists a well-defined functor $\Omega_{R} : R\sMod\rightarrow R\sMod$, called the {\it syzygy functor}, sending a module $M$ to its syzygy $\Omega_{R}(M)$.
In case $R$ is furthermore left Noetherian, the syzygy functor  $\Omega_R$ restricts to an endofunctor on the stable category $R\smod$ of $R\mod$.

We now define eventually periodic modules.

%%%%%%%%%%%%%%%%%%%%%%%%%%%%%%%%%%%%
%           definition ↓
%%%%%%%%%%%%%%%%%%%%%%%%%%%%%%%%%%%%
\begin{definition} \label{def_1}
{\rm 
An $R$-module $M$ is said to be {\it periodic} if there exists an integer $p>0$ such that $\Omega_{R}^{p}(M) \cong M$ in $R\Mod$.  
The smallest $p>0$ with this property is called the {\it period} of $M$.
We call $M$ {\it eventually periodic} if there exists an integer $n \geq 0$ such that  $\Omega_{R}^{n}(M)$ is periodic.
}
\end{definition}
%%%%%%%%%%%%%%%%%%%%%%%%%%%%%%%%%%%%
%           definition ↑
%%%%%%%%%%%%%%%%%%%%%%%%%%%%%%%%%%%%

In this paper, an {\it $(n, p)$-eventually periodic module} means an eventually periodic module whose $n$-th syzygy is the first periodic syzygy of period $p$. If $n=0$, such an eventually periodic module is called {\it $p$-periodic}.
For example, modules of finite projective dimension $n$ are $(n+1, 1)$-eventually periodic.
We now provide an example of $(n, p)$-eventually periodic modules.

%%%%%%%%%%%%%%%%%%%%%%%%%%%%%%%%%%%%
%           Example ↓
%%%%%%%%%%%%%%%%%%%%%%%%%%%%%%%%%%%%
\begin{example} \label{example_3}
{\rm 
Fix two integers $n \geq 0$ and $p>0$, and consider the truncated algebra $\L = kQ/R^2$, where $Q$ is the following quiver:
\vspace{5.5mm}
\begin{align*}
\xymatrix{
n \ar[r] & n-1 \ar[r] & \cdots \ar[r] & 1 \ar[r] & 0 \ar[r] & -1 \ar[r] & \cdots \ar[r] & -p+1 \ar@/_20pt/[lll]}
\end{align*}
and $R$ is the arrow ideal of the path algebra $k Q$.
We denote by $S_i$ the simple $\L$-module  associated with the vertex $i$.
A direct calculation shows that  $S_i$ is $(i, p)$-eventually periodic if  $1 \leq i \leq n$ and is $p$-periodic if $-p+1 \leq i \leq 0$.
In particular, $S_n$ is $(n, p)$-eventually periodic.
}\end{example}
%%%%%%%%%%%%%%%%%%%%%%%%%%%%%%%%%%%%
%           Example ↑
%%%%%%%%%%%%%%%%%%%%%%%%%%%%%%%%%%%%

It is easy to see that if $M$ is a periodic module, then all its syzygies are periodic and have the same period as $M$.
This implies that the class of periodic modules is closed under syzygies. 
Therefore, it is natural to introduce the following notion.

%%%%%%%%%%%%%%%%%%%%%%%%%%%%%%%%%%%%
%           definition ↓
%%%%%%%%%%%%%%%%%%%%%%%%%%%%%%%%%%%%
\begin{definition} \label{def_2} 
{\rm 
The {\it periodic dimension} of an $R$-module $M$ is defined by
\[
\Pdim_{R}M := \inf\left\{\,n \geq 0 \mid  \Omega_{R}^{n}(M) \mbox{\rm \ is periodic}\,\right\}.
\]
}
\end{definition}
%%%%%%%%%%%%%%%%%%%%%%%%%%%%%%%%%%%%
%           definition ↑
%%%%%%%%%%%%%%%%%%%%%%%%%%%%%%%%%%%%

By definition, $M$ is eventually periodic if and only if $\Pdim_{R}M < \infty$. 
In this case, $\Pdim_R M$ is equal to the degree $n$ of the first periodic syzygy $\Omega_R^{n}(M)$.
For instance, if $M$ is an $R$-module of finite projective dimension, then $\Pdim_R M = \projdim_R M +1$.
Also, if $M$ has finite periodic dimension $n$, then we have  
    \begin{align*}
        \Pdim_R \Omega_R^{i}(M) = 
        \begin{cases}
        n-i & \mbox{ if $0 \leq i \leq n$,} \\
        0 & \mbox{  if $i > n$ .}
        \end{cases}
    \end{align*}
Moreover, for any family $\{ M_i \}_{i \in I}$ of $R$-modules, the isomorphism (\ref{eq_9}) yields an inequality
    \begin{align} \label{eq_7}
    \Pdim_R \bigoplus_{i\in I} M_i \leq \sup\{\, \Pdim_R M_i \mid i \in I\,\}.
    \end{align}
As in the following example, the equality does not hold in general.

%%%%%%%%%%%%%%%%%%%%%%%%%%%%%%%%%%%%
%           Example ↓
%%%%%%%%%%%%%%%%%%%%%%%%%%%%%%%%%%%%
\begin{example} \label{example_2}
{\rm 
Let $Q$ be the following quiver:
\begin{align*}
\xymatrix{
5 \ar@<0.6ex>[r] & 4 \ar@<0.6ex>[l] \ar[r] & 3 \ar[r] & 2 \ar[r] & 1 \ar[r] & 0 }
\end{align*}
and consider $\L = kQ/R^2$.
A direct calculation shows that
\begin{align*}
    \projdim_{\L}S_i = 
    \begin{cases}
    i & 
    \mbox{ if $ 0 \leq i \leq 3$, }\\
    \infty & \mbox{ if $4 \leq i \leq 5$}
    \end{cases}
\end{align*}
and 
\begin{align*}
    \Pdim_{\L}S_i = 
    \begin{cases}
    i+1 & 
    \mbox{ if $ 0 \leq i \leq 3$, }\\
    i-1 & \mbox{ if $4 \leq i \leq 5$,}
    \end{cases}
\end{align*}
and that $\Omega_{\L}^{3}(S_4) = S_1 \oplus S_3 \oplus S_5$.
We then have that  
\begin{align*}
    \Pdim_\L \Omega_\L^{3}(S_4) = 0 < 4 = \max\{\, \Pdim_\L S_i \mid i = 1, 3, 5 \,\}.
\end{align*}
}\end{example}
%%%%%%%%%%%%%%%%%%%%%%%%%%%%%%%%%%%%
%           Example ↑
%%%%%%%%%%%%%%%%%%%%%%%%%%%%%%%%%%%%

Example \ref{example_2} concludes that the class of periodic modules is not closed under direct summands in general. The following observation says that direct summands of finitely generated periodic modules are at least eventually periodic and moreover decides when such a direct summand is again periodic.

%%%%%%%%%%%%%%%%%%%%%%%%%%%%%%%%%%%%
%           statement ↓
%%%%%%%%%%%%%%%%%%%%%%%%%%%%%%%%%%%%
\begin{proposition}  \label{claim_46}
Let $R$ be a left artin ring and $M$ a finitely generated periodic $R$-module. Then the following statements hold.
\begin{enumerate}
    \item Any indecomposable direct summand of $M$  is eventually periodic.
    \item Every indecomposable direct summand of $M$ is periodic if and only if $M$ has no non-zero direct summand with finite projective dimension.    
\end{enumerate}
\end{proposition}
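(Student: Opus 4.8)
The plan is to exploit the Krull--Schmidt property of $R\mod$ (valid since $R$ is left artin) together with the description of $n$-strongly Gorenstein projective modules recalled in Section~\ref{Preliminaries}. For (1), write $M = N \oplus N'$ with $N$ an indecomposable direct summand. Since $M$ is $p$-periodic, we have $\Omega_R^{p}(M)\cong M$, and by the decomposition of syzygies \eqref{eq_9} this gives $\Omega_R^{p}(N)\oplus \Omega_R^{p}(N')\cong N\oplus N'$ in $R\mod$. Iterating, $\Omega_R^{mp}(N)$ is, for every $m\geq 0$, a direct summand of $M$, hence (Krull--Schmidt) isomorphic to one of the finitely many indecomposable summands of $M$. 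By the pigeonhole principle there are $m<m'$ with $\Omega_R^{mp}(N)\cong \Omega_R^{m'p}(N)$; applying $\Omega_R^{?}$ appropriately we get $\Omega_R^{mp}(N)$ periodic, so $N$ is eventually periodic. (One should note that this argument in fact shows $\Pdim_R N \le $ (number of non-periodic indecomposable summands of $M$)$\cdot p$, but the mere finiteness is what is claimed.)

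For (2), the ``if'' direction: suppose $M$ has no non-zero direct summand of finite projective dimension, and let $N$ be an indecomposable direct summand. By (1), $N$ is eventually periodic, say $(n,q)$-eventually periodic with $n = \Pdim_R N$; I must show $n=0$. The key point is that a finitely generated periodic module $M$ has the property that $\Omega_R$ acts as a permutation (of finite order) on the set of isomorphism classes of indecomposable summands of $M$: indeed $\Omega_R$ restricts to a self-equivalence on $R\smod$ whose $p$-th power is the identity on the (finite) set of summands of $M$ up to stable isomorphism, and over a left artin ring a finitely generated module is projective iff it is stably zero, so on the \emph{non-projective} indecomposable summands $\Omega_R$ is a genuine permutation of finite order. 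Since $M$ has no summand of finite projective dimension, in particular no non-zero projective summand, every indecomposable summand $N$ of $M$ is non-projective and satisfies $\Omega_R^{p}(N)\cong N$; but we also need that $N$ itself, not merely some high syzygy, returns. This is exactly what the permutation statement gives: $\Omega_R^{p!}(N)\cong N$, so $N$ is periodic and $\Pdim_R N = 0$. Conversely, for ``only if'', if every indecomposable summand of $M$ is periodic and some non-zero summand $L$ had finite projective dimension, then $L$ would have a non-zero indecomposable summand $N$ with $\projdim_R N<\infty$ which is periodic; but a periodic module of finite projective dimension is zero (its minimal projective resolution is both eventually zero and periodic), a contradiction.

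The main obstacle I anticipate is the ``if'' direction of (2), specifically justifying that $\Omega_R$ sends each non-projective indecomposable summand of $M$ \emph{to another such summand} and acts invertibly there --- i.e.\ that syzygies of indecomposables over a left artin ring are again indecomposable up to projective summands, and that the syzygy functor on the stable category is a (quasi-)inverse-possessing operation on the relevant finite set. This needs the fact that $\Omega_R : R\smod \to R\smod$ need not be an equivalence in general, so one cannot simply invert it globally; instead one restricts to the additive closure of the summands of $M$, notes this subcategory is stable under $\Omega_R$ (because $\Omega_R^{p}(M)\cong M$), has only finitely many indecomposables, and on it $\Omega_R$ is surjective on objects hence, being an additive endofunctor of a Krull--Schmidt category with finitely many indecomposables that is essentially surjective, is an equivalence. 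Once that structural statement is in place, the rest is bookkeeping with $\Omega_R^{p!}$; I would present the permutation argument carefully and relegate the finite-projective-dimension-implies-zero observation to a one-line remark.
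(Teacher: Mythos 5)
Your part (1) is fine and is in fact a shortcut compared with the paper's argument: the paper decomposes $M$ into summands of infinite versus finite projective dimension and tracks a bijection on the former to get an explicit bound, whereas your pigeonhole over the syzygies $\Omega_R^{mp}(N)$ works with almost no bookkeeping. One small correction: $\Omega_R^{mp}(N)$ is a direct summand of $\Omega_R^{mp}(M)\cong M$ by (\ref{eq_9}), but it need not be indecomposable, so the pigeonhole must be taken over the (still finite, by Krull--Schmidt) set of isomorphism classes of \emph{direct summands} of $M$, not over its indecomposable summands; with that rewording the argument for (1) is complete.

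The genuine gap is in the ``if'' direction of (2), exactly at the step you flag as the main obstacle, and your proposed repair does not close it. First, $\mathrm{add}(M)$ is \emph{not} stable under $\Omega_R$ when $p>1$ (only $\Omega_R^{p}(M)\cong M$ is given, so only stability under $\Omega_R^{p}$ is available). Second, even for $\Omega_R^{p}$, essential surjectivity on $\mathrm{add}(M)$ is not established: from $\Omega_R^{p}(M)\cong M$ you only get that each indecomposable summand $N$ of $M$ is a direct summand of $\Omega_R^{p}(X)$ for some indecomposable summand $X$; to upgrade this to $N\cong\Omega_R^{p}(X)$ you would already need to know that $\Omega_R^{p}(X)$ is indecomposable, which is precisely the point in question (syzygies of indecomposables can decompose, cf.\ Example \ref{example_2}, where $\Omega_{\L}^{3}(S_4)=S_1\oplus S_3\oplus S_5$). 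Third, the principle you invoke --- that an essentially surjective additive endofunctor of a Krull--Schmidt category with finitely many indecomposables is an equivalence --- is false as stated (essential surjectivity gives neither fullness nor faithfulness), so even granting essential surjectivity the permutation statement would not follow this way. What does work, and is essentially the paper's proof, is a Krull--Schmidt multiplicity count: since $M$ has no non-zero summand of finite projective dimension, $\Omega_R^{p}(X)\neq 0$ for every indecomposable summand $X$; every indecomposable summand of $\Omega_R^{p}(X)$ is a summand of $M$, and comparing the total number of indecomposable summands (with multiplicity) on the two sides of $\Omega_R^{p}(M)\cong M$ forces each $\Omega_R^{p}(X)$ to be indecomposable and $X\mapsto\Omega_R^{p}(X)$ to be a permutation of the indecomposable summands; raising to the order $m$ of that permutation gives $\Omega_R^{mp}(X)\cong X$. (Also note the exponent should be $mp$, or $r!\,p$ with $r$ the number of summands as in the paper, not $p!$ --- $p$ is the period of $M$, not the number of its summands.) Your ``only if'' direction is correct.
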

\begin{proof}
Suppose that $M$ is $p$-periodic and that 
\begin{align} \label{eq_8}
    M = L_1  \oplus \cdots \oplus L_r \oplus N_1 \oplus  \cdots \oplus N_s \oplus N_{s+1} \oplus \cdots \oplus N_t 
\end{align}
is a decomposition of indecomposable $R$-modules such that 
\begin{enumerate}
    \item[(i)] $\projdim_{R} L_i = \infty$ for $1 \leq i \leq r$;
    \item[(ii)] $p \leq \projdim_{R} N_i < \infty$ for $1 \leq i \leq s$; and
    \item[(iii)] $\projdim_{R} N_i < p$ for $s+1 \leq i \leq t$.
\end{enumerate}

For (1), it is enough to show that each $L_i$ is eventually periodic. 
Since $\Omega_R^p(M) \cong M$, we have the following isomorphism in $R\mod$
\begin{align*}
&\Omega_R^{p}(L_1)  \oplus \cdots \oplus  \Omega_R^{p}(L_r) \oplus  \Omega_R^{p}(N_1) \oplus  \cdots \oplus  \Omega_R^{p}(N_s) \\[2mm]
 \cong &\   
L_1  \oplus \cdots \oplus L_r \oplus N_1 \oplus   \cdots \oplus N_t.
\end{align*}
Since $\projdim_R \Omega_R^{p}(L_i) = \infty$ and $\projdim_R \Omega_R^{p}(N_i) < \infty$, the Krull-Schmidt theorem implies that there exists a bijection $\sigma : \{1, \ldots, r\} \rightarrow \{1, \ldots, r\}$ such that  
\begin{align*} 
    \Omega_R^{p}(L_i) \cong L_{\sigma(i)} \oplus N_i^\prime
\end{align*}
in $R\mod$ for each $i$, where $N_i^\prime := \bigoplus_{j \in I(i)}N_j$ for some index set $I(i) \subseteq{\{1, \ldots, t\}}$.
Applying $\Omega_R^{lp}$ with $l:=r!$ to the above isomorphism, we obtain the following isomorphisms in $R\mod$:
\begin{align*} 
    \Omega_R^{(l+1)p}(L_i) 
    &\cong \Omega_R^{lp}\!\left(L_{\sigma(i)}\right) \oplus \Omega_R^{lp}\!\left(N_i^\prime\right) \nonumber
    \\[1.5mm]
    &\cong \Omega_R^{(l-1)p}\!\left(L_{\sigma^2(i)}\right) \oplus \Omega_R^{(l-1)p}\!\left(N_{\sigma(i)}^\prime\right)\oplus \Omega_R^{lp}\!\left(N_i^\prime\right) \nonumber
    \\
    &\ \,\vdots\nonumber
    \\
    &\cong L_{\sigma^{l+1}(i)} \oplus N_{\sigma^l(i)}^\prime \oplus \left( \bigoplus_{j=1}^{l} \Omega_R^{j p}\!\left(N_{\sigma^{l-j}(i)}^\prime\right) \right) \nonumber
    \\[1.5mm]
    &\cong \Omega_R^{p}(L_i) \oplus \left(\bigoplus_{j=1}^{l} \Omega_R^{j p}\!\left(N_{\sigma^{l-j}(i)}^\prime \right) \right). 
\end{align*}
Since the direct summand 
\begin{align*}
   \bigoplus_{j=1}^{l} \Omega_R^{j p}\!\left(N_{\sigma^{l-j}(i)}^\prime \right)
\end{align*}
has finite projective dimension, say $d_i$, 
we deduce that
\begin{align*}
    \Omega_R^{(p+d_i)+lp}(L_i) =  \Omega_R^{(l+1)p+d_i}(L_i) \cong  \Omega_R^{p+d_i}(L_i)
\end{align*}
in $R\mod$. 
This means that the periodic dimension of $L_i$ is finite and at most $p + d_i$.

For (2), it suffices to show the\ \lq\lq if\ \!\rq\rq\ part. 
When $M$ is in $R\modfpd$, or equivalently, $t=0$ in the decomposition (\ref{eq_8}), one gets a bijection $\sigma : \{1, \ldots, r\} \rightarrow \{1, \ldots, r\}$ such that $\Omega_R^{p}(L_i) \cong L_{\sigma(i)}$ for each $i$.
It then follows that $\Omega_R^{l p}(L_i) \cong L_{\sigma^{l}(i)} = L_{i}$.
\end{proof}
%%%%%%%%%%%%%%%%%%%%%%%%%%%%%%%%%%%%
%           statement ↑
%%%%%%%%%%%%%%%%%%%%%%%%%%%%%%%%%%%%

We have the following consequence of Proposition \ref{claim_46}.

%%%%%%%%%%%%%%%%%%%%%%%%%%%%%%%%%%%%
%           statement ↓
%%%%%%%%%%%%%%%%%%%%%%%%%%%%%%%%%%%%
\begin{corollary} \label{claim_47}
Let $\{ M_i \}_{i \in I}$ be a finite set of finitely generated modules over a left artin ring $R$. 
Assume that $M := \bigoplus_{i\in I} M_i$ is $(n, p)$-eventually periodic with $\Omega_R^{n}(M)$ in $R\modfpd$. Then $\Omega_R^{n}(M_i)$ is periodic for all $i \in I$, and we have
    \begin{align*}
        \Pdim_R M = \max\left\{\, \Pdim_R M_i \mid i \in I\,\right\}.
    \end{align*}
\end{corollary}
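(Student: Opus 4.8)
The plan is to reduce Corollary~\ref{claim_47} to Proposition~\ref{claim_46} applied to the periodic module $\Omega_R^{n}(M)$, together with the already-noted behaviour of periodic dimension under syzygies and direct sums. First I would observe that $\Omega_R^{n}(M) \cong \bigoplus_{i\in I}\Omega_R^{n}(M_i)$ by the isomorphism~(\ref{eq_9}), and that this module is $p$-periodic by hypothesis and lies in $R\modfpd$. Writing each $\Omega_R^{n}(M_i)$ as a finite direct sum of indecomposables, the whole of $\Omega_R^{n}(M)$ becomes a finite direct sum of indecomposable summands of a finitely generated $p$-periodic module with no non-zero summand of finite projective dimension; hence by Proposition~\ref{claim_46}(2) every such indecomposable summand is periodic. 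In particular each $\Omega_R^{n}(M_i)$, being a direct sum of periodic modules of period dividing $p$, is periodic, which gives the first assertion and shows $\Pdim_R M_i \le n$ for every $i$.

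Next I would pin down the exact value of $\Pdim_R M_i$. Fix $i\in I$. Since $\Omega_R^{n}(M_i)$ is periodic we have $\Pdim_R M_i \le n$; write $n_i := \Pdim_R M_i$, so $\Omega_R^{n_i}(M_i)$ is periodic while $\Omega_R^{n_i-1}(M_i)$ (if $n_i>0$) is not. The claim is $\max_i n_i = n$. The inequality $\max_i n_i \le n$ is already established, so it remains to show $n_i = n$ for at least one $i$. Suppose for contradiction that $n_i < n$ for all $i$; then $\Omega_R^{n-1}(M_i) \cong \Omega_R^{\,n-1-n_i}\big(\Omega_R^{n_i}(M_i)\big)$ is a syzygy of a periodic module, hence periodic, for every $i$. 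Taking the direct sum and using~(\ref{eq_9}) again, $\Omega_R^{n-1}(M) \cong \bigoplus_i \Omega_R^{n-1}(M_i)$ would then be periodic, contradicting that $n$ is the least integer with $\Omega_R^{n}(M)$ periodic (i.e.\ that $M$ is $(n,p)$-eventually periodic with $n\ge 1$; the case $n=0$ is trivial since then all $n_i=0$ as well). Therefore some $n_i$ equals $n$, and combined with $\Pdim_R M_j \le n$ for all $j$ this yields $\Pdim_R M = n = \max\{\Pdim_R M_i \mid i\in I\}$.

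The one point that needs a little care, and which I expect to be the main (though modest) obstacle, is the application of Proposition~\ref{claim_46}(2): that proposition is stated for a single finitely generated periodic module with no non-zero direct summand of finite projective dimension, so I must first justify that $\Omega_R^{n}(M)$ genuinely has \emph{no} non-zero summand of finite projective dimension --- this is exactly the hypothesis $\Omega_R^{n}(M)\in R\modfpd$ --- and that it is genuinely periodic (it is $p$-periodic by the definition of $(n,p)$-eventually periodic). Once these are in place, decomposing into indecomposables is legitimate because $R$ is left artin (Krull--Schmidt), and each indecomposable summand of $\Omega_R^{n}(M)$ is an indecomposable summand of a module in $R\modfpd$, hence itself of infinite projective dimension, so Proposition~\ref{claim_46}(2) applies directly. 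Everything else is a formal manipulation of syzygies of direct sums via~(\ref{eq_9}) and the displayed formula for $\Pdim_R\Omega_R^{i}(M)$ in terms of $\Pdim_R M$.
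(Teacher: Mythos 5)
Your proposal is correct and takes essentially the same route as the paper: you apply Proposition \ref{claim_46}(2) to the periodic module $\Omega_R^{n}(M)\in R\modfpd$ to conclude that every indecomposable summand, and hence each $\Omega_R^{n}(M_i)$, is periodic, giving $\Pdim_R M_i\leq n$, and your concluding contradiction argument is just an inline re-derivation of the general inequality (\ref{eq_7}) that the paper invokes implicitly to finish. (Your parenthetical claim that the indecomposable summands have period dividing $p$ is not actually justified by Proposition \ref{claim_46} --- its proof only gives a period dividing a multiple of $p$ --- but this is harmless, since you only use that a finite direct sum of periodic modules is periodic.)
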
  
\begin{proof} 
We know from Proposition \ref{claim_46} (2) that each indecomposable direct summand of $\Omega_R^{n}(M)$ is periodic.
Since a direct sum of periodic modules is periodic as well, we see that each $\Omega_R^{n}(M_i)$ is periodic. 
This implies that $\Pdim_R M_i \leq \Pdim_R M$ for each $i \in I$, which completes the proof.
\end{proof}
%%%%%%%%%%%%%%%%%%%%%%%%%%%%%%%%%%%%
%           statement ↑
%%%%%%%%%%%%%%%%%%%%%%%%%%%%%%%%%%%%

Let $M$ be an $R$-module and $n$ a positive integer. 
One easily observes that if $\Ext_{R}^{n}(M, X)= 0$ for all $X \in R\Fpd$, then $\Omega_R^{n}(M)$ belongs to $R\Modfpd$.

Next, we treat eventually periodic modules of finite Gorenstein projective dimension. 
We begin with the following lemma.

%%%%%%%%%%%%%%%%%%%%%%%%%%%%%%%%%%%%
%           statement ↓
%%%%%%%%%%%%%%%%%%%%%%%%%%%%%%%%%%%%
\begin{lemma}  \label{claim_1}
Let $M$ be an $R$-module such that $\Omega_{R}^{n+p}(M)$ $\cong$ $\Omega_{R}^{n}(M)$ in $R\sMod$ for some $n \geq 0$ and $p>0$.
Then we have $\Pdim_{R}M \leq n+1$.  
Moreover, the period of the first periodic syzygy of $M$ divides $p$.
\end{lemma}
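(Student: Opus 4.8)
The plan is to upgrade the given stable periodicity $\Omega_{R}^{n+p}(M)\cong\Omega_{R}^{n}(M)$ in $R\sMod$ to a genuine isomorphism of modules one step further along the minimal projective resolution, namely $\Omega_{R}^{n+p+1}(M)\cong\Omega_{R}^{n+1}(M)$ in $R\Mod$. Once this is available, $\Omega_{R}^{n+1}(M)$ is periodic by definition, so $\Pdim_{R}M\le n+1$; and the statement about the period of the first periodic syzygy will follow from an elementary divisibility argument.

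For the upgrade I would invoke the well-known description of isomorphisms in the stable category: for $R$-modules $X$ and $Y$ one has $X\cong Y$ in $R\sMod$ if and only if there are projective modules $P$ and $Q$ with $X\oplus P\cong Y\oplus Q$ in $R\Mod$. Applying this to $X=\Omega_{R}^{n+p}(M)$ and $Y=\Omega_{R}^{n}(M)$ yields projectives $P,Q$ with $\Omega_{R}^{n+p}(M)\oplus P\cong\Omega_{R}^{n}(M)\oplus Q$ in $R\Mod$. Now I would apply the (honest, hence well-defined up to isomorphism) syzygy operation $\Omega_{R}$ on $R\Mod$; since $\Omega_{R}$ annihilates projective modules and commutes with finite direct sums by (\ref{eq_9}), this gives
\[
\Omega_{R}^{n+p+1}(M)\cong\Omega_{R}\!\left(\Omega_{R}^{n+p}(M)\oplus P\right)\cong\Omega_{R}\!\left(\Omega_{R}^{n}(M)\oplus Q\right)\cong\Omega_{R}^{n+1}(M)
\]
in $R\Mod$, as wanted.

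With $\Pdim_{R}M\le n+1$ in hand, set $N:=\Omega_{R}^{n+1}(M)$ and let $d:=\Pdim_{R}M$, so $d\le n+1$. Since the class of periodic modules is closed under syzygies and all syzygies of a periodic module share its period, $N=\Omega_{R}^{\,n+1-d}\!\left(\Omega_{R}^{d}(M)\right)$ is periodic with the same period $q$ as the first periodic syzygy $\Omega_{R}^{d}(M)$. The displayed isomorphism shows $p\in S:=\{\,m>0\mid\Omega_{R}^{m}(N)\cong N\text{ in }R\Mod\,\}$, and $q=\min S$. If $m>m'$ lie in $S$, then from $\Omega_{R}^{m'}(N)\cong N$ one gets $\Omega_{R}^{m-m'}(N)\cong\Omega_{R}^{m-m'}\!\left(\Omega_{R}^{m'}(N)\right)=\Omega_{R}^{m}(N)\cong N$, so $m-m'\in S$; by the Euclidean algorithm every element of $S$ is a multiple of $q$, and in particular $q\mid p$.

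The only genuine obstacle is the first move: the stable syzygy functor $\Omega_{R}\colon R\sMod\to R\sMod$ by itself only reproduces the same kind of statement, $\Omega_{R}^{n+p+1}(M)\cong\Omega_{R}^{n+1}(M)$ in $R\sMod$, which does not suffice to witness periodicity in $R\Mod$. The description of stable isomorphisms recalled above is exactly what converts the stable hypothesis into a module isomorphism after applying one syzygy; after that, everything is a formal manipulation of syzygies and direct sums.
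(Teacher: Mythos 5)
Your proof is correct and follows essentially the same route as the paper: the paper also converts the stable isomorphism into a module isomorphism $\Omega_{R}^{n+p}(M)\oplus P\cong\Omega_{R}^{n}(M)\oplus Q$ with $P,Q$ projective (citing Auslander--Bridger) and then applies $\Omega_{R}$, which kills the projective summands, to get $\Omega_{R}^{n+p+1}(M)\cong\Omega_{R}^{n+1}(M)$ in $R\Mod$. Your additional Euclidean-algorithm argument for the divisibility of $p$ by the period is sound and simply makes explicit what the paper leaves implicit.
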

\begin{proof} 
By \cite[Proposition 1.44]{Auslander-Bridger_1969}, there exist two projective $R$-modules $P$ and $Q$ such that $\Omega_{R}^{n+p}(M) \oplus P \cong \Omega_{R}^{n}(M) \oplus Q$ in $R\Mod$.
Taking their syzygies, we obtain an isomorphism $\Omega_{R}^{n+p+1}(M) \cong \Omega_{R}^{n+1}(M)$ in $R\Mod$.
\end{proof}
%%%%%%%%%%%%%%%%%%%%%%%%%%%%%%%%%%%%
%           statement ↑
%%%%%%%%%%%%%%%%%%%%%%%%%%%%%%%%%%%%

We are now ready to give the main result of this section, which says that periodic dimension is almost equal to Gorenstein projective dimension when both of the two dimensions are finite.

%%%%%%%%%%%%%%%%%%%%%%%%%%%%%%%%%%%%
%           statement ↓
%%%%%%%%%%%%%%%%%%%%%%%%%%%%%%%%%%%%
\begin{theorem} \label{claim_6}
Let $M$ be an eventually periodic $R$-module of finite Gorenstein projective dimension $r$.
Then we have
\begin{align*}
    r \,\leq\, \Pdim_R M \,\leq\, r+1.
\end{align*}
Moreover, there exists an isomorphism in $R\sMod$
\begin{align*}
     \Omega_{R}^{r+p}(M) \cong \Omega_{R}^{r}(M),
\end{align*}
where $p$ is the period of the first periodic syzygy of $M$.
\end{theorem}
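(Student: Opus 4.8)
## Proof Proposal

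The plan is to exploit the characterization of Gorenstein projective dimension in terms of vanishing of Ext into projectives, namely equation~(\ref{eq_2}), together with the fact that $R\GProj \subseteq {}^\perp(R\Fpd)$. Write $M$ as an $(n,p)$-eventually periodic module, so $\Omega_R^n(M)$ is periodic of period $p$ and $\Pdim_R M = n$. The lower bound $r \le \Pdim_R M$ should be the easier half: I would argue by contradiction. If $n < r$, then $\Omega_R^n(M)$ is periodic, hence every syzygy $\Omega_R^{n+j}(M) \cong \Omega_R^{n+(j \bmod p)}(M)$ ranges over only finitely many isomorphism classes; in particular, since $r = \Gpd_R M < \infty$, by the syzygy formula for Gorenstein projective dimension (a standard consequence of Holm, or of the equality $\Gpd_R \Omega_R^i(M) = \Gpd_R M - i$ for $i \le \Gpd_R M$) we'd get that $\Omega_R^n(M)$ has $\Gpd_R \Omega_R^n(M) = r - n > 0$, yet $\Omega_R^n(M)$ is a syzygy (all syzygies of a periodic module are periodic of the same period) and equals $\Omega_R^p(\Omega_R^n(M))$. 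Actually the cleanest route: a periodic module $N = \Omega_R^p(N)$ has $\Gpd_R N \in \{0, \infty\}$, because if $\Gpd_R N = s < \infty$ then $\Gpd_R \Omega_R^{ps}(N) = \max(0, s - ps) = 0$ for $p \ge 1, s \ge 1$, but $\Omega_R^{ps}(N) \cong N$ up to projective summands, forcing $\Gpd_R N = 0$, a contradiction unless $s = 0$. Hence if $\Pdim_R M = n$ and $\Omega_R^n(M)$ is periodic with $\Gpd_R M = r < \infty$, then $\Gpd_R \Omega_R^n(M) = 0$, which gives $r = \Gpd_R M \le n$ by the additivity/minimality of Gorenstein projective dimension over the resolution $M \leftarrow \cdots \leftarrow$ (the first $n$ syzygies together with a Gorenstein projective $\Omega_R^n(M)$ produce a Gorenstein projective resolution of length $n$). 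That yields $r \le n = \Pdim_R M$.

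For the upper bound and the displayed stable isomorphism, I would set $G := \Omega_R^r(M)$. Since $\Gpd_R M = r$, the module $G$ is Gorenstein projective, so it sits on a totally acyclic complex $T_\bullet$, and we have access to the triangulated structure of $R\sGProj$ with shift $\Sigma$ satisfying $\Sigma^{-i}G = \Omega_R^i(G) = \Omega_R^{r+i}(M)$ in $R\sMod$ for $i \ge 0$ (Veliche, \cite[Lemma 2.3.4]{Veliche_2006}). Now use eventual periodicity: $\Omega_R^n(M)$ is periodic of period $p$. If $n \le r$, then $\Omega_R^r(M) = \Omega_R^{r-n}(\Omega_R^n(M))$ is periodic of period dividing $p$ — in fact exactly $p$ since syzygies of periodic modules keep the period — so $\Omega_R^{r+p}(M) \cong \Omega_R^r(M)$ already in $R\Mod$, a fortiori in $R\sMod$, and $\Pdim_R M \le r \le r+1$. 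If instead $n = r+1$ (the remaining case, given the lower bound $r \le n$ just proved, forces $n \in \{r, r+1\}$), then $\Omega_R^{r+1}(M) = \Omega_R^1(G)$ is periodic of period $p$, so $\Omega_R^{r+1+p}(M) \cong \Omega_R^{r+1}(M)$ in $R\Mod$; applying the shift (inverse syzygy) functor $\Sigma$ once in the triangulated category $R\sGProj$ — equivalently using that $G$ is Gorenstein projective so $\Sigma$ is an autoequivalence and $\Sigma \Omega_R = \id$ on $R\sGProj$ — transports this to $\Omega_R^{r+p}(M) \cong \Omega_R^r(M)$ in $R\sMod$. This also re-confirms $\Pdim_R M \le n = r+1$ by Lemma~\ref{claim_1}.

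The main obstacle, and the step deserving the most care, is the passage from a periodicity isomorphism at level $r+1$ back up to level $r$ \emph{in the stable category}: one must justify that $\Sigma$ (the inverse of $\Omega_R$ on $R\sGProj$) genuinely sends the isomorphism $\Omega_R^{r+1+p}(M) \cong \Omega_R^{r+1}(M)$ to $\Omega_R^{r+p}(M) \cong \Omega_R^r(M)$ — this requires knowing that all modules in sight ($\Omega_R^r(M)$ and its syzygies) are Gorenstein projective (true, as syzygies of the Gorenstein projective $G$), that $\Omega_R$ is fully faithful on $R\sGProj$, and that $\Sigma^{-1} = \Omega_R$ there, so that a stable iso after one syzygy comes from a stable iso before it. The bookkeeping of the period — that it is preserved, not merely divided, when passing between consecutive syzygies of a periodic module — also needs a short argument (if $\Omega_R^{p'}(\Omega_R G) \cong \Omega_R G$ with $p' < p$, apply $\Sigma$ to contradict minimality of $p$ for $G$, using that $G$ is Gorenstein projective). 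Everything else is assembling the two cases $n = r$ and $n = r+1$ and invoking Lemma~\ref{claim_1} and equation~(\ref{eq_2}).
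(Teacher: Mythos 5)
Your overall strategy---identify $\Omega_R^r(M)$ as Gorenstein projective, move the periodicity isomorphism along the shift functor of $R\sGProj$, and finish with Lemma~\ref{claim_1}---is essentially the paper's route (the paper realizes the same desuspension by splicing the periodic tail into a totally acyclic complex and invoking $\Sigma^{-1}=\Omega_R$). Your lower-bound argument, via \lq\lq a periodic module of finite Gorenstein projective dimension is Gorenstein projective, hence the truncated resolution of length $n$ witnesses $\Gpd_R M\le n$\rq\rq, is a correct, slightly different substitute for the paper's appeal to the periodicity of $\Ext$ against projectives via (\ref{eq_2}).

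There is, however, a genuine gap in your upper bound. After proving $r\le n$ you split into the cases $n\le r$ and $n=r+1$, justifying the restriction by saying the lower bound \lq\lq forces $n\in\{r,r+1\}$\rq\rq. It does not: at that point you only know $r\le n$, and $n\le r+1$ is exactly what remains to be proved, so the case $n>r+1$---the only case in which the upper bound has content---is never addressed; as written the argument is circular, and neither $\Pdim_R M\le r+1$ nor the stable isomorphism $\Omega_R^{r+p}(M)\cong\Omega_R^{r}(M)$ is established in general. The repair is just your $n=r+1$ argument run for arbitrary $n\ge r$: every $\Omega_R^{i}(M)$ with $i\ge r$ is Gorenstein projective, periodicity gives $\Omega_R^{n-r}\bigl(\Omega_R^{r+p}(M)\bigr)=\Omega_R^{n+p}(M)\cong\Omega_R^{n}(M)=\Omega_R^{n-r}\bigl(\Omega_R^{r}(M)\bigr)$ in $R\sMod$, and since $\Omega_R=\Sigma^{-1}$ is an autoequivalence of the full subcategory $R\sGProj$ of $R\sMod$, it reflects isomorphisms between Gorenstein projectives, so applying $\Sigma^{n-r}$ yields $\Omega_R^{r+p}(M)\cong\Omega_R^{r}(M)$ in $R\sMod$; only then does Lemma~\ref{claim_1} give $n\le r+1$. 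With that adjustment (and noting that your appeal to \lq\lq the period is preserved under syzygy\rq\rq{} is only needed in the weaker form \lq\lq divides $p$\rq\rq), the proof is complete and matches the paper's in substance.
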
 

\begin{proof}
Suppose that $M$ is $(n, p)$-eventually periodic.
We first show that $r \leq n \leq r+1$.
Fix a minimal projective resolution $P_\bullet \rightarrow M$ of $M$. 
The inequality $r \leq n$ is obtained from the fact that $\Omega_{R}^{n}(M) \cong \Omega_{R}^{n+ip}(M)$ for all $i \geq 0$.
On the other hand, splicing the periodic part 
\[ 0 \rightarrow \Omega_{R}^{n+p}(M) \rightarrow P_{n+p-1} \rightarrow \cdots \rightarrow P_{n} \rightarrow \Omega_{R}^{n}(M) \rightarrow 0\] 
repeatedly, we can construct an acyclic complex $T_\bullet$ of projective $R$-modules such that  $\Omega_0(T_\bullet) = \Omega_{R}^{n}(M)$.
Since $\Omega_{R}^{i}(M)$ is Gorenstein projective for any $i \geq n$, \cite[Lemma 2.3.3]{Veliche_2006} implies that the acyclic complex $T_\bullet$ becomes totally acyclic.
Hence $\Sigma^{i}(\Omega_{R}^{n}(M))= \Sigma^{i}(\Omega_0(T_\bullet))= \Omega_{-i}(T_\bullet)$ for all  $i \in \mathbb{Z}$, where $\Sigma$ denotes the shift functor on $R\sGProj$.
Since $\Sigma^{-1} = \Omega_R$, there exist isomorphisms in $R\sGProj$
\begin{align*} 
    \Omega_{R}^{r}(M) 
    \cong \Sigma^{n-r} \Sigma^{r-n}(\Omega_{R}^{r}(M)) 
    \cong \Sigma^{n-r}(\Omega_{R}^{n}(M)) 
    \cong \Omega_{R}^{n+l}(M)
\end{align*}
for some $l$ with $0 \leq l < p$.
Applying $\Omega_R^{p}$ to the above, we obtain the following isomorphisms in $R\sGProj$:
\begin{align*}
    \Omega_{R}^{r+p}(M) 
    \cong \Omega_{R}^{n+l+p}(M) 
    \cong \Omega_{R}^{n+l}(M) 
    \cong \Omega_{R}^{r}(M).
\end{align*}
Thus Lemma \ref{claim_1} shows that $n \leq r+1$.
This completes the proof.
\end{proof}
%%%%%%%%%%%%%%%%%%%%%%%%%%%%%%%%%%%%
%           statement ↑
%%%%%%%%%%%%%%%%%%%%%%%%%%%%%%%%%%%%

As in Theorem \ref{claim_6}, one can prove a similar result for a Noetherian semiperfect ring. 
We state it without proof.

%%%%%%%%%%%%%%%%%%%%%%%%%%%%%%%%%%%%
%           statement ↓
%%%%%%%%%%%%%%%%%%%%%%%%%%%%%%%%%%%%
\begin{theorem} \label{claim_50}
Let $R$ be a Noetherian semiperfect ring and $M$ a finitely generated $(n,p)$-eventually periodic $R$-module with $\Gdim_R M = r < \infty$.
Then we have
\begin{align*}
    r \,\leq\, n \,\leq\, r+1.
\end{align*}
Moreover, there exists an isomorphism in $R\smod$
\begin{align*}
     \Omega_{R}^{r+p}(M) \cong \Omega_{R}^{r}(M).
\end{align*}
\end{theorem}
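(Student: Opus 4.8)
The plan is to transplant the proof of Theorem~\ref{claim_6} into the finitely generated, Noetherian setting, checking that each ingredient has the expected counterpart. Over a Noetherian semiperfect ring $R$ every finitely generated module has a minimal projective resolution by finitely generated projectives, unique up to isomorphism of complexes; by~(\ref{eq_11}) one has $\Gdim_R M = \Gpd_R M$; and, as recalled at the close of Section~\ref{Preliminaries}, $R\Gproj$ is a Frobenius category whose associated stable category $R\sGproj$ is triangulated, with shift $\Sigma$ restricting the syzygy functor on $R\smod$, so that $\Sigma^{-1} = \Omega_R$. I would also use that a finitely generated module of Gorenstein dimension $r$ has all its $i$-th syzygies with $i\ge r$ totally reflexive, and that Lemma~\ref{claim_1} together with its proof (via \cite[Proposition~1.44]{AusBri69}) remains valid for finitely generated modules over $R$ --- its only inputs being minimal projective resolutions and the fact that a stable isomorphism of finitely generated modules is an isomorphism up to finitely generated projective summands, so the left perfect hypothesis is not actually needed here.

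First, for the inequality $r\le n$, I would fix a projective $R$-module $Q$ and, for $i>n$, use dimension shifting along the minimal resolution to get $\Ext_R^{i}(M,Q)\cong\Ext_R^{i-n}(\Omega_R^{n}(M),Q)$; since $\Omega_R^{n}(M)$ is genuinely $p$-periodic this equals $\Ext_R^{i-n}(\Omega_R^{n+jp}(M),Q)\cong\Ext_R^{i-n+jp}(M,Q)$ for every $j\ge 0$, and choosing $j$ with $i-n+jp>r$ makes it vanish by~(\ref{eq_2}) (applicable since $\Gpd_R M = r<\infty$). Hence $\Gpd_R M\le n$, that is, $r\le n$.

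Then, for $n\le r+1$, I would splice the periodic exact sequence $0\to\Omega_R^{n+p}(M)\to P_{n+p-1}\to\cdots\to P_n\to\Omega_R^{n}(M)\to 0$ with itself, using $\Omega_R^{n+p}(M)\cong\Omega_R^{n}(M)$, into an acyclic complex $T_\bullet$ of finitely generated projectives with $\Omega_0(T_\bullet)=\Omega_R^{n}(M)$, all of whose syzygies lie among $\Omega_R^{n}(M),\dots,\Omega_R^{n+p-1}(M)$ and are therefore totally reflexive because $r\le n$; by \cite[Lemma~2.3.3]{Veliche_2006} the complex $T_\bullet$ is then totally acyclic, so $\Sigma^{i}(\Omega_R^{n}(M))=\Omega_{-i}(T_\bullet)$ for all $i\in\mathbb{Z}$. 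Using $\Sigma^{-1}=\Omega_R$, the inequality $r\le n$, and the fact that $\Sigma^{p}$ fixes $\Omega_R^{n}(M)$ in $R\sGproj$ (because $\Omega_R^{p}(\Omega_R^{n}(M))\cong\Omega_R^{n}(M)$ in $R\mod$), I would obtain in $R\sGproj$
\[
\Omega_R^{r}(M)\;\cong\;\Sigma^{n-r}\Sigma^{r-n}(\Omega_R^{r}(M))\;\cong\;\Sigma^{n-r}(\Omega_R^{n}(M))\;\cong\;\Omega_R^{n+l}(M)
\]
for the representative $l$ of $r-n$ modulo $p$ with $0\le l<p$; applying $\Omega_R^{p}$ and $p$-periodicity then gives $\Omega_R^{r+p}(M)\cong\Omega_R^{r}(M)$ in $R\sGproj$, hence in $R\smod$, since $R\sGproj$ is a full subcategory of $R\smod$ and both modules are totally reflexive. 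Finally, Lemma~\ref{claim_1} applied to finitely generated modules would force $n\le r+1$, completing the argument.

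I expect the only real work to be bookkeeping rather than new ideas: making sure that every statement borrowed from the ``large'' setting (left perfect rings, arbitrary modules, $R\GProj$ and $R\sGProj$) has its evident finitely generated Noetherian analogue, in particular that \cite[Proposition~1.44]{AusBri69} and \cite[Lemma~2.3.3]{Veliche_2006} are available in the finitely generated form used above, and that a Noetherian semiperfect ring failing to be left perfect causes no trouble. None of this looks problematic, which is presumably why the authors state the theorem without proof.
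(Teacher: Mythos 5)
Your proposal is correct and follows essentially the route the paper intends: it is the proof of Theorem~\ref{claim_6} transplanted to finitely generated modules over a Noetherian semiperfect ring, using (\ref{eq_11}), (\ref{eq_2}), the splicing/total-acyclicity argument via \cite[Lemma 2.3.3]{Veliche_2006}, and the finitely generated analogue of Lemma~\ref{claim_1} via \cite[Proposition 1.44]{AusBri69}, which is exactly why the paper states Theorem~\ref{claim_50} without proof. Your bookkeeping of the needed ingredients (minimal resolutions of finitely generated modules, $\Sigma^{-1}=\Omega_R$ on $R\sGproj$, and fullness of $R\sGproj$ in $R\smod$) is accurate.
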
 
%%%%%%%%%%%%%%%%%%%%%%%%%%%%%%%%%%%%
%           statement ↑
%%%%%%%%%%%%%%%%%%%%%%%%%%%%%%%%%%%%

%%%%%%%%%%%%%%%%%%%%%%%%%%%%%%%%%%%%
%           remark ↓
%%%%%%%%%%%%%%%%%%%%%%%%%%%%%%%%%%%%
\begin{remark} \label{remark_3}
{\rm
Let $R$ be a Gorenstein local ring.
Then Theorem \ref{claim_50} can be used to improve results on eventually periodic $R$-modules such as \cite[Theorem 4.1]{Eisenbud_1980}, \cite[Theorem 1.2]{Gasharov-Peeva_1990} and \cite[Theorem 1.6]{Avramov_1989_proceedings}.
}\end{remark}
%%%%%%%%%%%%%%%%%%%%%%%%%%%%%%%%%%%%
%           remark ↑
%%%%%%%%%%%%%%%%%%%%%%%%%%%%%%%%%%%%

We end this section with three corollaries of Theorem \ref{claim_6}. 
First, we consider two extreme cases for eventually periodic modules having finite Gorenstein projective dimension.

%%%%%%%%%%%%%%%%%%%%%%%%%%%%%%%%%%%%
%           statement ↓
%%%%%%%%%%%%%%%%%%%%%%%%%%%%%%%%%%%%
\begin{corollary} \label{claim_7}
The following statements hold for any $R$-module $M$.
\begin{enumerate}

    \item If $M$ is  $p$-periodic and has finite Gorenstein projective dimension, then $M$ is $p$-strongly Gorenstein projective.
    
    \item If $M$ is $(n,p)$-eventually periodic and is Gorenstein projective, then $M$ is $p$-strongly Gorenstein projective. 

\end{enumerate}
\end{corollary}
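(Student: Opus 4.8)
The plan is to reduce both statements directly to the structural information already extracted in Theorem~\ref{claim_6}, namely the stable isomorphism $\Omega_R^{r+p}(M) \cong \Omega_R^{r}(M)$ together with the fact that the periodic syzygies are Gorenstein projective and hence $\Ext$-orthogonal to all projectives. For part~(1), I would note that $M$ being $p$-periodic means $n=0$, so the Gorenstein projective dimension $r$ satisfies $0 \le r \le n = 0$, forcing $r=0$; that is, $M$ itself is Gorenstein projective. Theorem~\ref{claim_6} then yields an isomorphism $\Omega_R^{p}(M) \cong M$ in $R\sMod$. Combined with $M$ Gorenstein projective (so $\Ext_R^{i}(M,P)=0$ for all $i>0$ and all $P\in R\Proj$), the characterization of $n$-strongly Gorenstein projective modules recalled in Section~\ref{Preliminaries} — $\Omega_n(P_\bullet)\cong M$ in $R\sMod$ plus vanishing of $\Ext_R^{i}(M,P)$ for $1\le i\le n$ — applies with $n=p$, giving that $M$ is $p$-strongly Gorenstein projective.

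For part~(2), $M$ is Gorenstein projective, so $r=0$, and Theorem~\ref{claim_6} again provides $\Omega_R^{p}(M)\cong\Omega_R^{0}(M)=M$ in $R\sMod$ (here the first periodic syzygy is $\Omega_R^{n}(M)$ but since $M$ is already Gorenstein projective the argument of the theorem collapses to $r=0$ and the stable isomorphism is at degree $0$). The same Ext-vanishing for Gorenstein projective $M$ then lets us invoke the $n$-strongly Gorenstein projective criterion with $n=p$, concluding that $M$ is $p$-strongly Gorenstein projective.

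The one point that needs a little care — and is the main obstacle — is making sure the stable isomorphism in Theorem~\ref{claim_6} is realized by an honest exact sequence of the required shape, i.e.\ that $\Omega_R^{p}(M)\cong M$ in $R\sMod$ can be upgraded, using projectivity of the intermediate terms in a minimal projective resolution and the Ext-vanishing, to an exact sequence $0\to M\to P_{p-1}\to\cdots\to P_0\to M\to 0$ left exact under $\Hom_R(-,Q)$ for every $Q\in R\Proj$. This is exactly the content of the equivalence stated in Section~\ref{Preliminaries} (cf.\ \cite[Proposition 2.2.17]{X-WChen_2017}), so it suffices to verify its hypotheses: the stable isomorphism $\Omega_p(P_\bullet)\cong M$ holds by Theorem~\ref{claim_6}, and the $\Ext$-conditions $\Ext_R^{i}(M,P)=0$ for $1\le i\le p$ hold because $M\in R\GProj\subseteq{}^{\perp}(R\Proj)$. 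Once these are in place, both parts follow immediately, so I would keep the write-up short: state $r=0$ in each case, quote Theorem~\ref{claim_6} for the degree-$p$ stable periodicity, and cite the $n$-strongly Gorenstein projective characterization.
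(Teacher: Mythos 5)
Your proposal is correct and takes essentially the same route as the paper, whose proof simply declares the corollary a direct consequence of Theorem \ref{claim_6}: in both parts one forces $r=0$ (via $r\leq\Pdim_R M$ in (1), via Gorenstein projectivity in (2)), reads off the stable isomorphism $\Omega_R^{p}(M)\cong M$ in $R\sMod$, and combines it with $R\GProj\subseteq{}^{\perp}(R\Proj)$ and the characterization of $p$-strongly Gorenstein projective modules recalled in the Preliminaries. No gaps.
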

\begin{proof} 
It is a direct consequence of Theorem \ref{claim_6}.
\end{proof}
%%%%%%%%%%%%%%%%%%%%%%%%%%%%%%%%%%%%
%           statement ↑
%%%%%%%%%%%%%%%%%%%%%%%%%%%%%%%%%%%%

Next, we refine Theorem \ref{claim_6} in case $R$ is left artin.

%%%%%%%%%%%%%%%%%%%%%%%%%%%%%%%%%%%%
%           statement ↓
%%%%%%%%%%%%%%%%%%%%%%%%%%%%%%%%%%%%
\begin{corollary} \label{claim_49}
Let $R$ be a left artin ring and $M$ a finitely generated $(n,p)$-eventually periodic $R$-module with $\Gpd_R M = r < \infty$.
Then the following assertions hold.
\begin{enumerate}
    \item $r \leq n \leq r+1$, and $\Omega_{R}^{r+p}(M) \oplus P \cong \Omega_{R}^{r}(M)$ in $R\mod$ for some $P \in R\proj$.
    \item $n=r$ if and only if  $\Omega_{R}^{r}(M)$ is in $R\modp$.
    \item If we write $\Omega_{R}^{n-1}(M) = X \oplus Q$ with $X \in R\modp$ and $Q \in R\proj$, then  $r=n-1$ if and only if  $X \cong \Omega_{R}^{n+p-1}(M)$ in $R\mod$.
\end{enumerate}
\end{corollary}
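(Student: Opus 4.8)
The plan is to bootstrap everything off Theorem \ref{claim_6} (for $R$ left artin, which is in particular left perfect) together with the Krull--Schmidt property of $R\mod$ and the fact, recalled in the excerpt, that finitely generated Gorenstein projective modules over a Gorenstein-type ring lie in ${}^{\perp}(R\Proj) = {}^{\perp}(R\Fpd)$. For part (1), I would invoke Theorem \ref{claim_6} directly: it gives $r \le n \le r+1$ and an isomorphism $\Omega_R^{r+p}(M) \cong \Omega_R^{r}(M)$ in $R\sMod$; by \cite[Proposition 1.44]{AusBri69} (the same tool used in Lemma \ref{claim_1}) a stable isomorphism between finitely generated modules upgrades to $\Omega_R^{r+p}(M) \oplus P \cong \Omega_R^{r}(M) \oplus P'$ in $R\mod$ with $P, P'$ finitely generated projective, and since $\Omega_R^{r}(M) = \Omega_R^{r-n}(\Omega_R^{n}(M))$ with $\Omega_R^{n}(M)$ periodic hence $\Omega_R^{r}(M)$ a syzygy of a periodic module — one should check that the minimal projective resolution makes $\Omega_R^{r}(M)$ have no projective summand when $r = n$, and when $r = n-1$ handle the $P'$ by absorbing it — so that after removing the common projective summand one gets the stated form $\Omega_R^{r+p}(M) \oplus P \cong \Omega_R^{r}(M)$. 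Actually the cleanest route is: $\Omega_R^{r}(M)$ is Gorenstein projective (as $r \ge n$ fails only when... no, $r \le n$ so $\Omega_R^r(M)$ need not be Gorenstein projective when $r=n-1$); I would instead just cite Theorem \ref{claim_6} and note $\Omega_R^{r+p}(M)$ is a $(p{+}\text{something})$-th syzygy taken from a minimal resolution, hence has no nonzero projective summand, forcing $P' = 0$ and giving exactly the claim.

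For part (2), the point is that $n = r$ is equivalent to $\Omega_R^{r}(M) = \Omega_R^{n}(M)$ being the first periodic syzygy, and by Theorem \ref{claim_6} we always have the stable periodicity $\Omega_R^{r+p}(M) \cong \Omega_R^{r}(M)$; so the question is precisely when this stable isomorphism is an honest isomorphism, i.e. when $\Omega_R^{r}(M) \in R\modp$ (no projective summand). If $\Omega_R^{r}(M) \in R\modp$, then since $\Omega_R^{r+p}(M)$ also lies in $R\modp$ (being a syzygy of period $p$ later), part (1) plus Krull--Schmidt forces $P = 0$ and $\Omega_R^{r+p}(M) \cong \Omega_R^{r}(M)$, so $\Omega_R^{r}(M)$ is periodic and $n \le r$, whence $n = r$. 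Conversely if $n = r$, then $\Omega_R^{n}(M)$ is the first periodic syzygy, and I would argue it has no projective summand: a projective summand of $\Omega_R^n(M)$ with $n \ge 1$ would contradict minimality of the resolution, and if $n = 0$ then $M$ itself is periodic and an indecomposable projective summand of a periodic module is both projective and periodic — but a periodic module has infinite projective dimension on every indecomposable summand unless... here one uses that projective modules are $(1,1)$-eventually periodic not periodic (unless zero), so a periodic module has no nonzero projective summand. Hence $\Omega_R^r(M) \in R\modp$.

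For part (3), write $\Omega_R^{n-1}(M) = X \oplus Q$ with $X \in R\modp$, $Q \in R\proj$. If $r = n-1$, then $\Omega_R^{r}(M) = \Omega_R^{n-1}(M) = X \oplus Q$, and by part (1), $\Omega_R^{r+p}(M) \oplus P \cong X \oplus Q$ for some $P \in R\proj$; since $\Omega_R^{n+p-1}(M) = \Omega_R^{r+p}(M)$ is a syzygy from the minimal resolution it has no projective summand, so Krull--Schmidt matches its indecomposable summands with those of $X$ and yields $\Omega_R^{n+p-1}(M) \cong X$. Conversely, if $X \cong \Omega_R^{n+p-1}(M)$, then taking $p$-fold syzygies and noting $\Omega_R^p(Q) = 0$ (projective), the isomorphism $\Omega_R^p$ applied to $\Omega_R^{n-1}(M) = X \oplus Q$ gives $\Omega_R^{n+p-1}(M) \cong \Omega_R^p(X) \cong X$ already — rather, one deduces $\Omega_R^{n-1+p}(M) \cong \Omega_R^{n-1}(M)$ up to the projective summand $Q$, i.e. $\Omega_R^{n-1}(M)$ becomes periodic in $R\sMod$ and in fact $X$ is periodic; then $\Pdim_R M \le n-1$, and combined with $r \le n$ from part (1) plus the already-established case analysis, this forces $r = n-1$. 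The main obstacle I anticipate is the careful bookkeeping of projective summands — distinguishing "stable isomorphism" from "genuine isomorphism" and repeatedly using that syzygies coming from a \emph{minimal} projective resolution lie in $R\modp$ — so that Krull--Schmidt can be applied cleanly at each step; the homological content is entirely supplied by Theorem \ref{claim_6}.
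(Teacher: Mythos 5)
Your skeleton (Theorem \ref{claim_6} plus Krull--Schmidt plus bookkeeping of projective summands) is the right one, but the load-bearing step rests on a false claim. You repeatedly assert that a syzygy taken from a \emph{minimal} projective resolution has no nonzero projective direct summand --- this is what you use to kill $P'$ in (1), to get the forward direction of (2) when $n\geq 1$, and to apply Krull--Schmidt in (3). Minimality does not give this: the paper's own Example \ref{example_5} has $\Omega_{\L}^{3}(S_3)=(\rad P_3)/S_4\oplus P_0$, and in Example \ref{example_1} the projective $S_0$ is a direct summand of $\Omega_{\G}^{d}(S_d)$, both computed from minimal resolutions; indeed, if your claim were true, part (2) would be trivially \lq\lq always $n=r$\rq\rq\ and part (3) vacuous. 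What actually forces $\Omega_R^{r+p}(M)$ into $R\modp$ is the hypothesis $\Gpd_R M=r<\infty$: by Holm's formula (\ref{eq_2}), $\Ext_R^{i}(M,Q)=0$ for all $i>r$ and all projective $Q$, and over a minimal resolution a nonzero projective direct summand of $\Omega_R^{i}(M)$ with $i\geq 1$ produces a nonzero class in $\Ext_R^{i}(M,Q)$ (push out along the projection onto the summand; minimality, i.e.\ $\Omega_R^{i}(M)\subseteq\rad P_{i-1}$, prevents splitting). Since $r+p>r$, this is exactly why $\Omega_R^{r+p}(M)\in R\modp$, which is the paper's argument; once this is in place, your Krull--Schmidt steps in (1), the backward direction of (2), and the \lq\lq only if\rq\rq\ half of (3) do go through. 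Note also that your $n=0$ case in (2) (\lq\lq a periodic module has no projective summand because projectives are not periodic\rq\rq) tacitly assumes that direct summands of periodic modules are periodic, which Example \ref{example_2} refutes; under the finite-$\Gpd$ hypothesis the statement is true, but it comes from the same Ext-vanishing (or simply from (1), as in the paper), not from that reasoning.

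The \lq\lq if\rq\rq\ direction of (3) is also off. From $X\cong\Omega_R^{n+p-1}(M)$ you conclude \lq\lq $X$ is periodic; then $\Pdim_R M\leq n-1$\rq\rq, but the latter does not follow ($X\oplus Q$ need not be periodic when $Q\neq 0$) and would in fact contradict the hypothesis $\Pdim_R M=n$; the subsequent appeal to \lq\lq the already-established case analysis\rq\rq\ is not an argument for excluding $r=n$. The step that closes this direction is Gorenstein projectivity: since $n+p-1\geq r$, the module $\Omega_R^{n+p-1}(M)$ is Gorenstein projective, hence so is $X$, hence so is $\Omega_R^{n-1}(M)=X\oplus Q$, giving $\Gpd_R M\leq n-1$ and therefore $r=n-1$ by (1). (Your observation that $X\cong\Omega_R^{p}(X)$ is correct, but turning periodicity of $X$ into Gorenstein projectivity still requires the finite-$\Gpd$ hypothesis together with Theorem \ref{claim_6}.)
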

\begin{proof} 
For (1), it suffices to get the stated isomorphism. 
We know from Theorem \ref{claim_6} that $\Omega_{R}^{r+p}(M)  \cong \Omega_{R}^{r}(M)$ in $R\sGproj$.
Then the fact that $\Ext_{R}^{n}(M, R)= 0$ for all $i>r$ implies that $\Omega_R^{r+p}(M)$ has no non-zero projective direct summand.
Consequently, the Krull-Schmidt theorem shows that there exists a finitely generated projective $R$-module $P$ such that $\Omega_{R}^{r+p}(M) \oplus P \cong \Omega_{R}^{r}(M)$ in $R\mod$.
Now, (2) is an immediate consequence of (1). 

It remains to show (3). 
Write the inequalities in (1) as $n-1 \leq r \leq n$.  
Let $\Omega_{R}^{n-1}(M) = X \oplus Q$ be a decomposition as $R$-modules, where $X$ and $Q$ belong to $R\modp$ and $R\proj$, respectively. 
Such an $R$-module $X$ is uniquely determined up to isomorphism.
To verify the\,\lq\lq only if\,\rq\rq\,part, suppose that $r= n-1$.
We then see that $X$ is a Gorenstein projective module satisfying that $\Omega_{R}(X) \cong \Omega_{R}^{n}(M) \cong \Omega_{R}^{n+p}(M) \cong \Omega_{R}^{p+1}(X)$ in $R\smod$. 
Thus it follows from Lemma \ref{claim_1} and Corollary \ref{claim_7} that $X$ is $p$-strongly Gorenstein projective, which implies that  $X \cong \Omega_{R}^{p}(X)$ in $R\smod$. 
This isomorphism can be lifted to $R\mod$ because both $X$ and $\Omega_{R}^{p}(X)$ have no non-zero projective summand.
Therefore, we obtain $X \cong  \Omega_{R}^{p}(X) \cong  \Omega_{R}^{n+p-1}(M)$ in $R\mod$.
The\,\lq\lq if\,\rq\rq\,part follows from the inequalities $n-1 \leq r \leq n \leq n+p-1$.
\end{proof}
%%%%%%%%%%%%%%%%%%%%%%%%%%%%%%%%%%%%
%           statement ↑
%%%%%%%%%%%%%%%%%%%%%%%%%%%%%%%%%%%%

The next example explains how to use Corollary \ref{claim_49} (3).
Recall that an artin algebra is called {\it CM-finite} \cite{Beligiannis_2011} if there are only finitely many pairwise nonisomorphic finitely generated indecomposable  Gorenstein projective modules. 
The class of CM-finite algebras includes algebras of finite global dimension, representation-finite algebras, and monomial algebras. 
Observe that if $\L$ is a CM-finite Gorenstein algebra, then any finitely generated $\L$-module has finite periodic and Gorenstein projective dimension (use Proposition \ref{Veliche_2006_2.4.2} and \cite[Lemma 2.2]{X-WChen_2012} for example).

%%%%%%%%%%%%%%%%%%%%%%%%%%%%%%%%%%%%
%           Example ↓
%%%%%%%%%%%%%%%%%%%%%%%%%%%%%%%%%%%%
\begin{example} \label{example_5}
{\rm 
Let $\L = kQ/I$, where $Q$ is the following quiver:
\begin{align*}
    \xymatrix@C=20pt@R=10pt{
      & 4 \ar[rd]^-{\beta}  &  &  &   &  \\
    5 \ar@(ul,dl)_-{\delta} \ar[ru]^-{\gamma} &  & 3 \ar[ll]^-{\alpha} \ar[r]^-{\varepsilon} & 2 \ar[r]^-{\zeta} & 1 \ar[r]^-{\eta} & 0 
    }
\end{align*}
and $I$ is the ideal of $kQ$ generated by $\alpha\beta\gamma -\delta^2, \, \gamma\alpha\beta, \, \delta\alpha, \, \gamma\delta, \, \varepsilon\beta, \, \zeta\varepsilon, \, \eta\zeta$.
Then one sees that $\L$ is a finite dimensional special biserial $4$-Gorenstein algebra. 
Moreover, it follows from \cite[Section 2]{Wald-Waschbusch_1985} that $\L$ is representation-finite.
Thus the above observation concludes that $\Pdim_{\L}M < \infty$ and  $\Gpd_{\L}M < \infty$ for all $M \in \L\mod$.
Recall that $S_i$ is the simple $\L$-module corresponding to the vertex $i$.
A calculation shows that
\begin{align*}
    \projdim_{\L}S_i = 
    \begin{cases}
    i
    & 
    \mbox{ if $ 0 \leq i \leq 2$, }\\
    \infty & \mbox{ if $3 \leq i \leq 5$}
    \end{cases}
\end{align*}
and 
\begin{align*}
    \Pdim_{\L}S_i = 
    \begin{cases}
    i+1 & 
    \mbox{ if $ 0 \leq i \leq 4$, }\\
    1 & \mbox{ if $i = 5$}.
    \end{cases}
\end{align*}
Moreover, the period of the first periodic syzygy of $S_i$ is equal to $1$ if $i = 0, 1, 2, 4, 5$ and to $2$ if $i=3$.
We denote by $P_i$ a projective cover of $S_i$ and by $\rad M$ the radical of a $\L$-module $M$.
By direct computation, we have $\Omega_{\L}^{i}(S_i) = 0 \oplus P_0$ and $\Omega_{\L}^{i+1}(S_i) = 0$ for $i=0, 1, 2$; $\Omega_{\L}^{3}(S_3) = (\rad P_3)/S_4 \oplus P_0$ with $(\rad P_3)/S_4 \in \L\modp$ and $\Omega_{\L}^{5}(S_3) = (\rad P_3)/S_4 $; and $\Omega_{\L}^{4}(S_4) = \rad P_5 \oplus P_0$ with $\rad P_5 \in \L\modp$ and $\Omega_{\L}^{5}(S_4) = \rad P_5$.    
On the other hand, $\Omega_{\L}^{0}(S_5) = S_5 \not= \rad S_5 = \Omega_{\L}^{1}(S_5)$ with $S_5 \in \L\modp$.
Therefore, Corollary \ref{claim_49} (3) concludes that 
\begin{align*}    
    \Gpd_{\L}S_i = 
    \begin{cases}
    \Pdim_{\L}S_i -1  = i & 
    \mbox{ if $ 0 \leq i \leq 4$, }\\
    \Pdim_{\L}S_5 = 1 & \mbox{ if $i = 5$}.
    \end{cases}
\end{align*}
}\end{example}
%%%%%%%%%%%%%%%%%%%%%%%%%%%%%%%%%%%%
%           Example ↑
%%%%%%%%%%%%%%%%%%%%%%%%%%%%%%%%%%%%

Finally, we give a useful property of periodic dimensions.

%%%%%%%%%%%%%%%%%%%%%%%%%%%%%%%%%%%%
%           statement ↓
%%%%%%%%%%%%%%%%%%%%%%%%%%%%%%%%%%%%
\begin{corollary} \label{claim_48}
Let $\{ M_i \}_{i \in I}$ be a finite set of finitely generated modules over a left artin ring $R$.
If $M := \bigoplus_{i\in I} M_i$ has finite Gorenstein projective dimension, then we have
    \begin{align*}
        \Pdim_R M = \sup\left\{\, \Pdim_R M_i \mid i \in I\,\right\}.
    \end{align*}
\end{corollary}
\begin{proof} 
Together with the inequality (\ref{eq_7}), Proposition \ref{claim_46} (1) implies that 
$\Pdim_R M$ $= \infty$ if and only if $\Pdim_R M_i$ $= \infty$ for some $i \in I$.
Thus we have to obtain the desired equality in case $\Pdim_R M = n < \infty$.
In this case, the first periodic syzygy $\Omega_R^n(M)$ is Gorenstein projective by Theorem \ref{claim_6} and hence belongs to $R\modfpd$. Here, we use the fact that $R\GProj \subseteq {}^{\perp}(R\Proj) = {}^{\perp}(R\Fpd)$.
Then Corollary \ref{claim_47} completes the proof.
\end{proof}
%%%%%%%%%%%%%%%%%%%%%%%%%%%%%%%%%%%%
%           statement ↑
%%%%%%%%%%%%%%%%%%%%%%%%%%%%%%%%%%%%

%%%%%%%%%%%%%%%%%%%%%%%%%%%%%%%%%%%%
%           Example ↓
%%%%%%%%%%%%%%%%%%%%%%%%%%%%%%%%%%%%
\begin{example} \label{example_6}
{\rm 
Let $\L = kQ/I$ be as in Example \ref{example_5}.
We know that $\Pdim_{\L}M < \infty$ and  $\Gpd_{\L}M < \infty$ for all $M \in \L\mod$.
Recall that $J(\L)$ is the Jacobson radical of $\L$.
Together with the fact that $\L/J(\L) = \bigoplus_{i=1}^{5} S_i$, we can conclude from Corollary \ref{claim_48} and Example \ref{example_5} that 
\begin{align*}    
    \Pdim_{\L}\L/J(\L) = \sup\left\{\, \Pdim_{\L}S_i \mid 0 \leq i \leq  5\,\right\}= 5.
\end{align*}
}\end{example}
%%%%%%%%%%%%%%%%%%%%%%%%%%%%%%%%%%%%
%           Example ↑
%%%%%%%%%%%%%%%%%%%%%%%%%%%%%%%%%%%%

%%%%%%%%%%%%%%%%%%%%%%%%%%%%%%%%%%%%
%           Section ↑
%%%%%%%%%%%%%%%%%%%%%%%%%%%%%%%%%%%%

%%%%%%%%%%%%%%%%%%%%%%%%%%%%%%%%%%%%
%           Section ↓
%%%%%%%%%%%%%%%%%%%%%%%%%%%%%%%%%%%%
%%%%%%%%%%%%%%%%%%%%%%%%%%%%%%%%%%%%
\section{Bimodule periodic dimensions of algebras} \label{The_case_of_algebras} 
%%%%%%%%%%%%%%%%%%%%%%%%%%%%%%%%%%%%
%%%%%%%%%%%%%%%%%%%%%%%%%%%%%%%%%%%%

In the rest of this paper, an algebra will mean a finite dimensional algebra over the field $k$.
This section studies the periodic dimensions of algebras viewed as regular bimodules.  
We start with the definition of eventually periodic algebras.

%%%%%%%%%%%%%%%%%%%%%%%%%%%%%%%%%%%%
%           definition ↓
%%%%%%%%%%%%%%%%%%%%%%%%%%%%%%%%%%%%
\begin{definition} \label{def_4}
{\rm 
An algebra $\L$ is called {\it eventually periodic} if the regular $\L$-bimodule $\L$ is eventually periodic.
If $\L$ is periodic as a $\L$-bimodule, $\L$ is said to be {\it periodic}.
}
\end{definition}
%%%%%%%%%%%%%%%%%%%%%%%%%%%%%%%%%%%%
%           definition ↑
%%%%%%%%%%%%%%%%%%%%%%%%%%%%%%%%%%%%

Throughout this paper, an {\it $(n,p)$-eventually periodic algebra} will mean an algebra $\L$ that is $(n,p)$-eventually periodic over $\Le$.
Further, we refer to $n = \Pdim_{\Le}\L$ as the {\it bimodule periodic dimension} of $\L$.

We now make a brief note on eventually periodic algebras:
as pointed out in \cite[Section 2]{Usui_2022}, eventually periodic algebras are not Gorenstein in general.
This is perhaps slightly surprising because periodic algebras are known to be self-injective (\cite[Proposition IV.11.18]{Skowronski-Yamagara_book_I}).
For this reason, we will characterize eventually periodic Gorenstein algebras (Proposition \ref{claim_39} and Theorem \ref{claim_38}).
We will also show that eventually periodic algebras are at least both left and right weakly Gorenstein (Proposition \ref{claim_40}). 
On the other hand, the class of eventually periodic algebras includes monomial Gorenstein algebras (\cite[the proof of Corollary 6.4]{Dotsenko-Gelinas-Tamaroff_2023}) and monomial Nakayama algebras (\cite[Section 3.2]{Usui_2022}). 
It is not difficult to check that this is a consequence of the following result due to K{\" u}pper \cite[Corollary 2.10 (1) and (2)]{kupper2010two}.

%%%%%%%%%%%%%%%%%%%%%%%%%%%%%%%%%%%%
%           ↓ statement     
%%%%%%%%%%%%%%%%%%%%%%%%%%%%%%%%%%%%
\begin{proposition}[K{\" u}pper] \label{claim_51}
Let $\L$ be a monomial algebra. 
Then $\L$ is an eventually periodic algebra if and only if every simple $\L$-module is eventually periodic.
\end{proposition}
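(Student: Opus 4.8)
The plan is to pass through the enveloping algebra $\Le$ and reduce the statement for the bimodule $\L$ to a statement about the simple $\Le$-modules, using the hypothesis that $\L$ is monomial. For the forward direction, suppose $\L$ is eventually periodic, i.e.\ $\L$ is eventually periodic over $\Le$. The key observation is that $\Le = \L \otimes_k \Lop$ is again a monomial algebra when $\L$ is monomial, and the simple $\Le$-modules are exactly the tensor products $S_i \otimes_k S_j^{\mathrm{op}}$ of simple $\L$-modules and simple $\Lop$-modules; moreover $\L/J(\Le)\cdot\L$ (the top of the regular $\Le$-module $\L$) decomposes as a direct sum of such simples. So I would first record that the top of $\L$ as an $\Le$-module is semisimple and each simple summand is a simple $\Le$-module, and then invoke K{\"u}pper's characterization of eventually periodic \emph{modules} over a monomial algebra (the module-level analogue underlying \cite{kupper2010two}) applied to $\Le$: a finitely generated $\Le$-module is eventually periodic iff ... — but more directly, I would use that over a monomial algebra the syzygies of any module are built (up to the relevant equivalence) from syzygies of simples, so $\L$ eventually periodic over $\Le$ forces each simple constituent of its top to be eventually periodic over $\Le$; pulling back along $\L \otimes_k \Lop$, this translates to each $S_i$ (as $\L$-module) being eventually periodic over $\L$.

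For the reverse direction, suppose every simple $\L$-module is eventually periodic. Over a monomial algebra $\L$ this is exactly the hypothesis of K{\"u}pper's criterion guaranteeing good control of syzygies; concretely, by \cite[Corollary 2.10]{kupper2010two} the syzygy behaviour of $\L$ is governed by that of the simples, so I would argue that each simple $\Le$-module $S_i\otimes_k S_j^{\mathrm{op}}$ is eventually periodic over $\Le$ (one expects $\Omega_{\Le}(S_i\otimes_k S_j^{\mathrm{op}})$ to relate to $\Omega_{\L}(S_i)$ and $\Omega_{\Lop}(S_j^{\mathrm{op}})$ via a Künneth-type decomposition, so periodicity of the syzygies of the factors yields eventual periodicity of the tensor product). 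Since $\L$ as an $\Le$-module has a finite filtration (indeed a projective resolution built from projectives $P_i\otimes_k P_j^{\mathrm{op}}$) and its top is a direct sum of such simples, eventual periodicity of all these simples over $\Le$ propagates to $\L$; here I would lean on the fact that over a monomial algebra eventual periodicity of every simple implies eventual periodicity of every finitely generated module, which is the substance of K{\"u}pper's result and is already asserted to apply in the paragraph preceding the statement.

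The main obstacle I anticipate is making precise the passage between $\Le$-modules and pairs of $\L$-modules — specifically, verifying that $\Le$ is monomial (which needs a compatible choice of admissible quiver presentation for the tensor-product algebra, using that $kQ\otimes_k kQ' \cong k(Q\times Q')$ modulo commutativity relations, and checking these extra commutativity relations do not destroy monomiality, or else arguing directly without this) and that the syzygies decompose in the expected Künneth fashion, so that ``$S_i$ eventually periodic over $\L$ for all $i$'' is genuinely equivalent to ``every simple $\Le$-module eventually periodic over $\Le$''. If invoking K{\"u}pper's theorem directly on $\Le$ is cleaner, then the obstacle reduces to just confirming $\Le$ is monomial and identifying its simple modules with the $S_i\otimes_k S_j^{\mathrm{op}}$; the equivalence ``$\L$ eventually periodic'' $\iff$ ``all simple $\Le$-modules eventually periodic over $\Le$'' is then immediate from applying the quoted criterion to the algebra $\Le$, and the remaining task is the routine translation ``simple $\Le$-module eventually periodic'' $\iff$ ``corresponding simple $\L$-module eventually periodic'', for which one uses the exactness of $-\otimes_k S_j^{\mathrm{op}}$ and that $\Lop$-syzygies of simples over a monomial algebra are again (direct sums of) simples up to projectives.
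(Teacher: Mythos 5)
There is a genuine gap: your reduction to the enveloping algebra does not work. First, $\Le=\L\otimes_k\Lop$ is essentially never a monomial algebra even when $\L$ is: any quiver presentation of $\Le$ involves the commutativity relations $(\alpha\otimes 1)(1\otimes\beta)-(1\otimes\beta)(\alpha\otimes 1)$, which are binomial and cannot be eliminated, so K\"upper's criterion (stated for monomial algebras) cannot be invoked for $\Le$. Second, and more fatally, the intermediate statement you want — ``every simple $\Le$-module is eventually periodic'' — is neither implied by the hypothesis nor equivalent to eventual periodicity of the bimodule $\L$. The K\"unneth-type step fails because the minimal resolution of $S_i\otimes_k S_j$ over $\Le$ is the \emph{total complex} of the two one-sided resolutions, so Betti numbers convolve rather than stay bounded. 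Concretely, take $\L=k[x]/(x^2)$: it is monomial, its unique simple module is periodic, and $\L$ is even periodic as a bimodule; yet $\Le\cong k[x,y]/(x^2,y^2)$ and the simple $\Le$-module $k$ has Betti numbers $\beta_n=n+1$, hence is not eventually periodic. So both bridges in your plan (simples over $\L$ $\Rightarrow$ simples over $\Le$, and simples over $\Le$ $\Leftrightarrow$ the bimodule $\L$) break down, and the ``if'' direction — the actual content of the proposition — is not established.

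For comparison: the paper does not prove this statement at all; it is quoted as K\"upper's result (Corollary 2.10 of \cite{kupper2010two}), whose proof is specific to monomial algebras and works with the combinatorics of the minimal bimodule resolution (overlaps/Bardzell-type data) rather than with simple $\Le$-modules. Note also that the ``only if'' direction needs none of this machinery and holds for arbitrary algebras: if $\L$ is $(n,p)$-eventually periodic as a bimodule, then $\Omega_{\L}^{\,n+p}\cong\Omega_{\L}^{\,n}$ as endofunctors on $\L\sMod$ (this is Lemma \ref{claim_42} in the paper, using $\Omega_{\L}^{\,i}\cong\Omega_{\Le}^{\,i}(\L)\otimes_{\L}-$ up to projectives), so every $\L$-module, in particular every simple, is eventually periodic. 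The monomial hypothesis is needed precisely for the converse, and any correct proof of it must engage with the structure of bimodule syzygies of monomial algebras directly, not via simple $\Le$-modules.
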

%%%%%%%%%%%%%%%%%%%%%%%%%%%%%%%%%%%%
%           statement ↑
%%%%%%%%%%%%%%%%%%%%%%%%%%%%%%%%%%%%

We now move on to considerations on the bimodule periodic dimensions of eventually periodic algebras.
In \cite[the proof of Corollary 6.4]{Dotsenko-Gelinas-Tamaroff_2023}, Dotsenko, G\'{e}linas and Tamaroff showed that $\Pdim_{\Le} \L \leq d+1$ for any monomial $d$-Gorenstein algebra $\L$, and 
the author proved in \cite[Proposition 4.2]{Usui21_No02} that $\Pdim_{\Le} \L \geq d$ for any $d$-Gorenstein algebra $\L$.
As the main result of this section, we will show that the bimodule periodic dimension of an eventually periodic $d$-Gorenstein algebra equals either $d$ or $d+1$. 
To this end, we now calculate the bimodule Gorenstein dimension of an arbitrary Gorenstein algebra.

Let $\L$ be an algebra. 
We see from \cite[Lemma 8.2.4]{Witherspoon_book} that for any finitely generated $\L$-modules $M$ and $N$, there exists an isomorphism of graded vector spaces 
\begin{align*}
    \Ext_\L^{\bullet}(M, N)  \cong \Ext_{\Le}^{\bullet}(\L, \Hom_k(M, N)).
\end{align*}
Let $D$ denote the $k$-duality $\Hom_k(-, k)$. 
Then the isomorphism of $\Le$-modules 
\begin{align*}
    \Le = {}_{\L}\L \otimes \L_\L \cong \Hom_k(D(\L_\L), {}_{\L}\L)
\end{align*}
induces the following isomorphism of graded vector spaces:
\begin{align} \label{eq_6}
    \Ext_\L^{\bullet}(D(\L), \L) \cong \Ext_{\Le}^{\bullet}(\L, \Le).
\end{align}

%%%%%%%%%%%%%%%%%%%%%%%%%%%%%%%%%%%%
%           ↓ statement     
%%%%%%%%%%%%%%%%%%%%%%%%%%%%%%%%%%%%
\begin{proposition} \label{claim_44}
Let $\L$ be a Gorenstein algebra. Then we have
\[\Gdim_{\Le} \L= \injdim_{\L}\L.\]
\end{proposition}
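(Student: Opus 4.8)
The plan is to compute $\Gdim_{\Le}\L$ by combining formula (\ref{eq_2}) (via the identity (\ref{eq_11})) with the isomorphism (\ref{eq_6}). Since $\L$ is Gorenstein, Proposition \ref{Veliche_2006_2.4.2} guarantees that every $\Le$-module has finite Gorenstein projective dimension (here one uses that $\Le$ is itself Gorenstein, e.g.\ by \cite[Lemma 6.1]{benson_iyengar_krause_pevtsova_2020}), so $\Gpd_{\Le}\L < \infty$ and hence, by (\ref{eq_2}),
\[
    \Gdim_{\Le}\L = \Gpd_{\Le}\L = \sup\left\{\, i \geq 0 \mid \Ext_{\Le}^{i}(\L, Q) \not= 0 \text{ for some } Q \in \Le\Proj \,\right\}.
\]
Because $\Le$ is a finite dimensional algebra, it suffices in the supremum to test against $Q = \Le$ itself (every projective is a summand of a free module, and $\Ext$ commutes with finite direct sums). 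So the task reduces to identifying the largest $i$ with $\Ext_{\Le}^{i}(\L, \Le) \not= 0$.

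Next I would invoke the isomorphism of graded vector spaces (\ref{eq_6}), namely $\Ext_\L^{\bullet}(D(\L), \L) \cong \Ext_{\Le}^{\bullet}(\L, \Le)$, to rewrite the supremum as
\[
    \Gdim_{\Le}\L = \sup\left\{\, i \geq 0 \mid \Ext_{\L}^{i}(D(\L), \L) \not= 0 \,\right\}.
\]
Now $D(\L)$ is the minimal injective cogenerator of $\L\mod$, so $\Ext_{\L}^{i}(D(\L), \L)$ detects the injective dimension of $\L$ as a left $\L$-module: one has $\injdim_{\L}\L \leq d$ if and only if $\Ext_{\L}^{d+1}(N, \L) = 0$ for all $N \in \L\mod$, and since every $N$ embeds (as a direct summand, after enough copies) appropriately into products of $D(\L)$, vanishing of $\Ext_{\L}^{d+1}(D(\L), \L)$ already forces $\injdim_{\L}\L \leq d$; conversely $\injdim_{\L}\L = d$ makes $\Ext_{\L}^{d}(D(\L), \L)$ nonzero because the top of a minimal injective resolution of $\L$ contributes a nonzero map. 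Putting these together yields $\Gdim_{\Le}\L = \injdim_{\L}\L$, as desired. (Recall $\injdim_{\L}\L = \injdim_{\Lop}\L$ for a Gorenstein algebra by \cite[Lemma A]{Zaks69}, so the left-module convention is harmless.)

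The step I expect to be the main obstacle is the last one: carefully justifying that $\sup\{\, i \mid \Ext_{\L}^{i}(D(\L), \L) \not= 0 \,\}$ equals $\injdim_{\L}\L$ rather than merely bounding it. The inequality $\leq$ is immediate since $\Ext_{\L}^{i}(-, \L)$ vanishes above $\injdim_{\L}\L$; the reverse inequality requires showing that $D(\L)$, as a cogenerator, is "large enough" to witness the top nonvanishing $\Ext$ against $\L$. The cleanest route is probably to note that $\Ext_{\L}^{\bullet}(D(\L), \L) \cong \Ext_{\Lop}^{\bullet}(D(\L_\L), \L)$ relates to $\injdim_{\Lop}\L$ on the other side, or simply to argue directly: if $\injdim_{\L}\L = d$ with minimal injective resolution $0 \to \L \to I^0 \to \cdots \to I^d \to 0$, apply $D$ to get a minimal projective resolution of $D(\L)$ over $\Lop$ and dualize back to see $\Ext^d$ is nonzero. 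I would keep this argument short by citing the standard fact that $\injdim_{\L}\L = \sup\{\, i \mid \Ext_{\L}^{i}(\L/J(\L), \L) \not= 0 \,\}$ and observing $\L/J(\L)$ is a summand of $\soc D(\L) \subseteq D(\L)$ up to multiplicity, which transfers the nonvanishing.
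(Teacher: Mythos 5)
Your reduction is the same as the paper's: since $\Le$ is Gorenstein, $\Gpd_{\Le}\L<\infty$, and by the formula (\ref{eq_2}) together with the isomorphism (\ref{eq_6}) the problem becomes computing $\sup\{\,i \mid \Ext^i_{\L}(D(\L),\L)\neq 0\,\}$. The genuine gap is in how you evaluate this supremum. The two justifications you lean on do not work: (i) from an embedding $0\to N\to D(\L)^m\to C\to 0$ the long exact sequence gives $\Ext^{d+1}_{\L}(C,\L)\to\Ext^{d+1}_{\L}(D(\L)^m,\L)\to\Ext^{d+1}_{\L}(N,\L)\to\Ext^{d+2}_{\L}(C,\L)$, so vanishing of $\Ext^{d+1}_{\L}(D(\L),\L)$ says nothing about $\Ext^{d+1}_{\L}(N,\L)$ unless you already control $C$, and iterating cosyzygies never terminates; a cogenerator is not enough here, since $N$ is not a direct summand of copies of $D(\L)$. (ii) Likewise, $\L/J(\L)$ being a submodule ($\soc D(\L)$) of $D(\L)$ does not transfer nonvanishing of $\Ext^i_{\L}(\L/J(\L),\L)$ to $\Ext^i_{\L}(D(\L),\L)$; only direct summands propagate $\Ext$ this way. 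There is also a left/right slip in your "direct" variant: the module $D(\L)$ in (\ref{eq_6}) is $D(\L_\L)$ viewed as a left $\L$-module, so the resolution to dualize is the minimal injective resolution of the right regular module $\L_\L$, which yields a minimal projective resolution of $D(\L)$ over $\L$ (not over $\Lop$) of length $\injdim_{\Lop}\L$.

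The fix, which is exactly the paper's proof, avoids injective-cogenerator arguments entirely: $\projdim_{\L}D(\L)=\injdim_{\Lop}\L=d<\infty$ (Gorensteinness plus Zaks' symmetry), hence $\Gdim_{\L}D(\L)=\projdim_{\L}D(\L)=d$ by \cite[Proposition 2.27]{HenrikHolm04}, and applying (\ref{eq_2}) to $D(\L)$ shows that the supremum you want equals $d$ on the nose. Combined with your (correct) finiteness step via the $(2d)$-Gorensteinness of $\Le$, this gives $\Gdim_{\Le}\L=\Gdim_{\L}D(\L)=d=\injdim_{\L}\L$. So the skeleton of your argument is right, but the final identification must go through $\projdim_{\L}D(\L)$ rather than through the cogenerator/socle reasoning you propose.
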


\begin{proof}
Assume that $\L$ is $d$-Gorenstein.
Since the enveloping algebra $\Le$ is $(2d)$-Gorenstein, it follows that $\Gdim_{\Le} \L \leq 2d$.
Moreover, we obtain that 
\begin{align*}
   d  = \injdim_{\L^{\rm op}}\L = \projdim_\L D(\L) =  \Gdim_\L D(\L).
\end{align*}
Hence the isomorphism (\ref{eq_6}) implies that $\Gdim_{\Le} \L = \Gdim_\L D(\L) = d$.
\end{proof}
%%%%%%%%%%%%%%%%%%%%%%%%%%%%%%%%%%%%
%           statement ↑
%%%%%%%%%%%%%%%%%%%%%%%%%%%%%%%%%%%%

Remark that the proposition extends a result of Shen \cite[Proposition 5.6]{Shen_2019} to the higher dimensional case.

We are now ready to prove the following main result of this section.

%%%%%%%%%%%%%%%%%%%%%%%%%%%%%%%%%%%%
%           statement ↓
%%%%%%%%%%%%%%%%%%%%%%%%%%%%%%%%%%%%
\begin{theorem} \label{claim_10}
Let $\L$ be an $(n, p)$-eventually periodic $d$-Gorenstein algebra.
Then we have
\begin{align*}
    d \leq  n \leq d+1.
\end{align*}
Moreover, there exists an isomorphism in $\Le\mod$
\begin{align*}
    \Omega_{\Le}^{d+p}(\L) \oplus P \cong \Omega_{\Le}^d(\L) 
\end{align*}
for some $P$ in $\Le\proj$.
In particular, $n = d$ if and only if $\Omega_{\Le}^d(\L)$ has no non-zero projective direct summand.
\end{theorem}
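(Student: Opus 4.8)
The plan is to deduce Theorem \ref{claim_10} directly from the already-established Theorem \ref{claim_6} (applied over the ring $\Le$) together with the computation of the bimodule Gorenstein dimension in Proposition \ref{claim_44}. First I would observe that $\Le$ is a left artin ring (indeed a finite dimensional algebra), so every $\Le$-module admits a minimal projective resolution and the notion of periodic dimension from Section \ref{Periodic dimension} applies to $\L$ regarded as a regular $\Le$-module. By hypothesis $\L$ is $(n,p)$-eventually periodic over $\Le$, so $\Pdim_{\Le}\L = n < \infty$; and by Proposition \ref{claim_44}, using $\Gdim_{\Le}\L = \Gpd_{\Le}\L$ from \eqref{eq_11}, we get $\Gpd_{\Le}\L = d < \infty$. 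Thus $\L$ is an eventually periodic $\Le$-module of finite Gorenstein projective dimension $d$, and Theorem \ref{claim_6} immediately yields $d \leq n \leq d+1$ together with the stable isomorphism $\Omega_{\Le}^{d+p}(\L) \cong \Omega_{\Le}^{d}(\L)$ in $\Le\sMod$.

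To upgrade the stable isomorphism to the asserted isomorphism in $\Le\mod$, I would apply Corollary \ref{claim_49}(1) with $R = \Le$ and $M = \L$ (which is finitely generated over $\Le$): this gives a finitely generated projective $\Le$-module $P$ with $\Omega_{\Le}^{d+p}(\L) \oplus P \cong \Omega_{\Le}^{d}(\L)$ in $\Le\mod$. The mechanism behind Corollary \ref{claim_49}(1) is that $\Ext_{\Le}^{i}(\L, Q) = 0$ for all $i > d$ and all $Q \in \Le\Proj$ (by \eqref{eq_2} applied to the finite Gorenstein projective dimension $d$), which forces $\Omega_{\Le}^{d+p}(\L)$ to have no non-zero projective direct summand, so the Krull--Schmidt theorem converts the stable isomorphism into a genuine one after adding a projective summand.

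Finally, for the ``in particular'' clause, I would argue as follows. If $\Omega_{\Le}^{d}(\L)$ has no non-zero projective direct summand, then the isomorphism $\Omega_{\Le}^{d+p}(\L) \oplus P \cong \Omega_{\Le}^{d}(\L)$ forces $P = 0$ (again by Krull--Schmidt, since the left-hand side has $P$ as a projective summand while the right-hand side has none), whence $\Omega_{\Le}^{d+p}(\L) \cong \Omega_{\Le}^{d}(\L)$ in $\Le\mod$, showing that $\Omega_{\Le}^{d}(\L)$ is periodic and therefore $n = \Pdim_{\Le}\L \leq d$; combined with $n \geq d$ this gives $n = d$. Conversely, if $n = d$ then $\Omega_{\Le}^{d}(\L)$ is itself periodic of period $p$, hence Gorenstein projective (being a syzygy past the Gorenstein projective dimension, or directly periodic), and $R\GProj \subseteq {}^{\perp}(R\Proj)$ gives $\Ext_{\Le}^{i}(\Omega_{\Le}^{d}(\L), Q) = 0$ for $i > 0$; a periodic module with no self-extensions into projectives has no non-zero projective summand (equivalently, this is exactly the content of Corollary \ref{claim_49}(2) with $r = n = d$). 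This covers both directions.

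I do not anticipate a serious obstacle here: the theorem is explicitly flagged in the introduction as ``a consequence of the first main result,'' and the only care needed is bookkeeping — verifying that $\Le$ meets the hypotheses (left perfect, indeed left artin) required by Theorem \ref{claim_6} and Corollary \ref{claim_49}, and correctly translating $\Gdim_{\Le}\L = \injdim_{\L}\L = d$ through Proposition \ref{claim_44} and \eqref{eq_11}. The mildest subtlety is making sure the ``no non-zero projective direct summand'' characterization in the last clause is stated with respect to $\Le\mod$ rather than $\Le\Mod$, but since $\L$ is finitely generated over $\Le$ all its syzygies are finitely generated and Krull--Schmidt applies throughout.
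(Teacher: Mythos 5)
Your proposal is correct and follows essentially the same route as the paper: the paper likewise derives $\Gpd_{\Le}\L=\Gdim_{\Le}\L=d$ from Proposition \ref{claim_44} and the formula (\ref{eq_11}), and then invokes Corollary \ref{claim_49} (whose parts (1) and (2) contain exactly the Theorem \ref{claim_6} bounds, the Krull--Schmidt upgrade of the stable isomorphism, and the ``no projective summand'' criterion you spell out). Your write-up simply unpacks the mechanism of Corollary \ref{claim_49} rather than citing it wholesale, which is fine.
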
 
\begin{proof}
It follows from the formula (\ref{eq_11}) and Proposition \ref{claim_44} that the finitely generated eventually periodic $\Le$-module $\L$ satisfies $\Gpd_{\Le} \L = \Gdim_{\Le} \L= d$. 
Thus Corollary \ref{claim_49} completes the proof. 
\end{proof}
%%%%%%%%%%%%%%%%%%%%%%%%%%%%%%%%%%%%
%           statement ↑
%%%%%%%%%%%%%%%%%%%%%%%%%%%%%%%%%%%%

%%%%%%%%%%%%%%%%%%%%%%%%%%%%%%%%%%%%
%           remark ↓
%%%%%%%%%%%%%%%%%%%%%%%%%%%%%%%%%%%%
\begin{remark} \label{remark_1}
{\rm
It is possible to describe the bimodule periodic dimension of an eventually periodic algebra with finite bimodule Gorenstein dimension.
As will be seen in Theorem \ref{claim_38}, such algebras are Gorenstein.
}\end{remark}
%%%%%%%%%%%%%%%%%%%%%%%%%%%%%%%%%%%%
%           remark ↑
%%%%%%%%%%%%%%%%%%%%%%%%%%%%%%%%%%%%

%%%%%%%%%%%%%%%%%%%%%%%%%%%%%%%%%%%%
%           remark ↓
%%%%%%%%%%%%%%%%%%%%%%%%%%%%%%%%%%%%
\begin{remark} \label{remark_2}
{\rm
    The bound given in Theorem \ref{claim_10} is the best possible. 
    Indeed, $d$-Gorenstein algebras of bimodule periodic dimension $d$ can be found in \cite[Proposition 4.3]{Usui21_No02}.
    Besides, Proposition \ref{claim_52} and Examples \ref{example_4} and \ref{example_1} below exhibit examples of $d$-Gorenstein algebras of bimodule periodic dimension $d+1$.
}\end{remark}
%%%%%%%%%%%%%%%%%%%%%%%%%%%%%%%%%%%%
%           remark ↑
%%%%%%%%%%%%%%%%%%%%%%%%%%%%%%%%%%%%

We now briefly recall some basic facts on projective resolutions over an algebra $\L$.
Let $P_\bullet \xrightarrow{\varepsilon} \L$ be  a projective resolution of $\L$ over $\Le$.
Then any $\L$-module $M$ admits a  projective resolution of the form  
\begin{align*}
    P_{\bullet} \otimes_{\L} M \xrightarrow{\varepsilon \otimes_{\L} \id_M} \L \otimes_{\L} M.
\end{align*}
In particular, $\Omega_{i}(P_{\bullet} \otimes_{\L} M) = \Omega_{i}(P_{\bullet}) \otimes_{\L} M$ for all $i \geq 0$.
Similar projective resolutions can be constructed for any $\L^{\rm op}$-modules.  
Therefore, one gets an inequality
\begin{align*}
    \gldim \L \leq \projdim_{\Le}\L.
\end{align*}
It is known that the equality holds if the semisimple quotient $\L/J(\L)$ is separable. 
The following observation gives another condition under which the equality holds.

%%%%%%%%%%%%%%%%%%%%%%%%%%%%%%%%%%%%
%           ↓ statement     
%%%%%%%%%%%%%%%%%%%%%%%%%%%%%%%%%%%%
\begin{proposition} \label{claim_52}
Let $\L$ be an algebra with finite bimodule projective dimension $d$. 
Then $\L$ is a $(d+1, 1)$-eventually periodic $d$-Gorenstein algebra. 
Moreover, we have $\gldim \L = \projdim_{\Le}\L$.
\end{proposition}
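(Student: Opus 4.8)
The plan is to deduce everything from the single hypothesis $\projdim_{\Le}\L=d<\infty$ together with the isomorphism $(\ref{eq_6})$. First, as an $\Le$-module $\L$ has finite projective dimension $d$, so by the observation following Definition~\ref{def_1} that a module of finite projective dimension $n$ is $(n+1,1)$-eventually periodic, $\L$ is a $(d+1,1)$-eventually periodic algebra. Next, the inequality $\gldim\L\le\projdim_{\Le}\L$ recalled above gives $\gldim\L\le d<\infty$; in particular the regular $\L$-module has finite injective dimension on both sides, so $\L$ is a Gorenstein algebra. What remains is to identify the relevant dimensions with the value $d$, and for this it will be enough to establish $\injdim_{\L}\L\ge d$.

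The key step will be to show $\Ext_{\Le}^{d}(\L,\Le)\ne 0$. Since $\Le$ is a finite-dimensional algebra and $\L$ is a finitely generated $\Le$-module with $\projdim_{\Le}\L=d$, I would take a minimal projective resolution $0\to P_d\xrightarrow{\delta}P_{d-1}\to\cdots\to P_0\to\L\to 0$ over $\Le$, so that $P_d\ne 0$ and $\delta(P_d)\subseteq J(\Le)P_{d-1}$ by minimality. Applying $\Hom_{\Le}(-,\Le)$, the image of $\Hom_{\Le}(P_{d-1},\Le)\to\Hom_{\Le}(P_d,\Le)$ is contained in the radical of the nonzero right $\Le$-module $\Hom_{\Le}(P_d,\Le)$, so its cokernel $\Ext_{\Le}^{d}(\L,\Le)$ is nonzero. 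Transporting this through $(\ref{eq_6})$ yields $\Ext_{\L}^{d}(D(\L),\L)\ne 0$, and since $D(\L)$ is a left $\L$-module this forces $\injdim_{\L}\L\ge d$.

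Combining the two steps gives $\injdim_{\L}\L=d$; since $\L$ is Gorenstein, $\injdim_{\Lop}\L=\injdim_{\L}\L=d$ by \cite[Lemma~A]{Zaks69}, so $\L$ is $d$-Gorenstein. Finally, $d\le\injdim_{\L}\L\le\gldim\L\le d$ shows $\gldim\L=d=\projdim_{\Le}\L$. The only step that is not routine bookkeeping is the nonvanishing $\Ext_{\Le}^{d}(\L,\Le)\ne 0$: this is what upgrades the numerical identity $\projdim_{\Le}\L=d$ to a genuine obstruction witnessed by the regular bimodule, and it relies on the minimality of projective resolutions over the (finite-dimensional, hence semiperfect) algebra $\Le$.
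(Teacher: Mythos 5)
Your proof is correct, but it pins down the value $d$ by a different mechanism than the paper. The paper's proof shares your first two steps (eventual periodicity of $(d+1,1)$-type from $\projdim_{\Le}\L=d$, and Gorensteinness from $\gldim\L\le\projdim_{\Le}\L<\infty$, which also gives $\injdim_{\L}\L=\gldim\L$), but then finishes in one line with the chain $d=\projdim_{\Le}\L=\Gpd_{\Le}\L=\Gdim_{\Le}\L=\injdim_{\L}\L$, using Holm's equality $\Gpd=\projdim$ for modules of finite projective dimension together with Proposition \ref{claim_44}. You instead bypass the Gorenstein-dimension machinery entirely: you prove the lower bound $\injdim_{\L}\L\ge d$ by exhibiting $\Ext_{\Le}^{d}(\L,\Le)\ne 0$ via a minimal $\Le$-projective resolution (the image of $\Hom_{\Le}(\delta,\Le)$ lies in $\Hom_{\Le}(P_d,J(\Le))=\rad\Hom_{\Le}(P_d,\Le)$, so Nakayama gives a nonzero cokernel), transport it through the isomorphism (\ref{eq_6}), and then sandwich $d\le\injdim_{\L}\L\le\gldim\L\le d$, invoking Zaks for the right-hand injective dimension. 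Both routes ultimately rest on (\ref{eq_6}) — the paper through Proposition \ref{claim_44}, you directly — so what your argument buys is self-containedness and elementarity (only minimal resolutions over the semiperfect algebra $\Le$ and Nakayama, no appeal to \cite{HenrikHolm04} or to Proposition \ref{claim_44}), at the cost of redoing by hand a nonvanishing that the paper gets for free from the formula (\ref{eq_2}) once $\Gpd_{\Le}\L=\projdim_{\Le}\L$ is known; the paper's version is shorter precisely because Proposition \ref{claim_44} already packages the identity $\Gdim_{\Le}\L=\injdim_{\L}\L$ for later reuse (e.g.\ in Theorem \ref{claim_10}).
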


\begin{proof}
We claim that $\L$ is $d$-Gorenstein.
Since $\gldim \L \leq \projdim_{\Le}\L < \infty$, it follows that $\L$ is Gorenstein with $\injdim_\L \L = \gldim\L$.
Now, one has
\begin{align*}
d  = \projdim_{\Le}\L = \Gdim_{\Le} \L = \injdim_\L \L ,
\end{align*}
where the third equality follows from Proposition \ref{claim_44}. This completes the proof.
\end{proof}
%%%%%%%%%%%%%%%%%%%%%%%%%%%%%%%%%%%%
%           statement ↑
%%%%%%%%%%%%%%%%%%%%%%%%%%%%%%%%%%%%

Next, we have the following description of the bimodule periodic dimensions of eventually periodic Gorenstein algebras.
In what follows, we set $\mathbbm{k} := \L/J(\L)$ for an algebra $\L$.

%%%%%%%%%%%%%%%%%%%%%%%%%%%%%%%%%%%%
%           statement ↓
%%%%%%%%%%%%%%%%%%%%%%%%%%%%%%%%%%%%
\begin{theorem} \label{claim_45}
Let $\L$ be an eventually periodic $d$-Gorenstein algebra.
If $\mathbbm{k}$ is a separable algebra, then we have
\begin{align*}
    \Pdim_{\Le} \L = \Pdim_{\L}\mathbbm{k} = \Pdim_{\L^{\rm op}}\mathbbm{k},
\end{align*}
where the common value is either $d$ or $d+1$.
Moreover, the following conditions are equivalent.
\begin{enumerate}
  \setlength{\itemsep}{.5mm} 
    \item The bimodule periodic dimension of $\L$ is equal to $d$.
    \item The $d$-th syzygy of  ${}_{\Le}\L$ is in $\Le\modp$.
    \item The $d$-th syzygy of  ${}_{\L}\mathbbm{k}$ is in $\L\modp$.
    \item The $d$-th syzygy of  ${}_{\L^{\rm op}}\mathbbm{k}$ is in $\L^{\rm op}\modp$.
\end{enumerate}
\end{theorem}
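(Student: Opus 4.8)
The plan is to reduce everything to the single-module statement of Corollary \ref{claim_49} applied over $\L$, $\L^{\mathrm{op}}$, and $\Le$, using the separability of $\mathbbm{k}$ to link the three periodic dimensions. First I would record the standard consequences of separability: when $\mathbbm{k}$ is separable, $\gldim\L=\projdim_{\Le}\L$, and more generally the functor $-\otimes_\L\mathbbm{k}$ (together with its right-hand analogue) transports a minimal projective $\Le$-resolution of $\L$ to a minimal projective $\L$-resolution of $\mathbbm{k}$ (and a $\L^{\mathrm{op}}$-resolution of $\mathbbm{k}$). Concretely, if $P_\bullet\to\L$ is a minimal projective $\Le$-resolution, then $P_\bullet\otimes_\L\mathbbm{k}\to\mathbbm{k}$ and $\mathbbm{k}\otimes_\L P_\bullet\to\mathbbm{k}$ are minimal projective resolutions over $\L$ and $\L^{\mathrm{op}}$; hence $\Omega_{\Le}^{i}(\L)\otimes_\L\mathbbm{k}\cong\Omega_\L^{i}(\mathbbm{k})$ and $\mathbbm{k}\otimes_\L\Omega_{\Le}^{i}(\L)\cong\Omega_{\L^{\mathrm{op}}}^{i}(\mathbbm{k})$ for all $i\geq 0$, and a projective $\Le$-module is sent to a projective (and nonzero, if the original was nonzero, by minimality over a semiperfect ring) $\L$-module, respectively $\L^{\mathrm{op}}$-module. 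This is the technical backbone; I expect it is either folklore or available in the literature on separable quotients, so I would cite rather than reprove it.

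Next I would establish the equality $\Pdim_{\Le}\L=\Pdim_\L\mathbbm{k}=\Pdim_{\L^{\mathrm{op}}}\mathbbm{k}$. Since $\L$ is eventually periodic, $\Pdim_{\Le}\L=n<\infty$, and applying $-\otimes_\L\mathbbm{k}$ to the periodicity $\Omega_{\Le}^{n+p}(\L)\cong\Omega_{\Le}^{n}(\L)$ (up to projective summands, as in Theorem \ref{claim_10}) immediately gives that $\Omega_\L^{n}(\mathbbm{k})$ is periodic, so $\Pdim_\L\mathbbm{k}\leq n$. For the reverse inequality I would use Proposition \ref{Dotsenko-Gelinas-Tamaroff_2023_Proposition 2.4}: $\L$ being Gorenstein forces $\mathbbm{k}$ to have finite Gorenstein dimension, and in fact $\Gdim_\L\mathbbm{k}=\injdim_\L\L=d$; the same for $\L^{\mathrm{op}}$. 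Then Theorem \ref{claim_50} (or Corollary \ref{claim_49}) applied to $\mathbbm{k}$ over $\L$ gives $d\leq\Pdim_\L\mathbbm{k}\leq d+1$; combined with $d\leq\Pdim_{\Le}\L\leq d+1$ from Theorem \ref{claim_10} and the inequality just proved, the three values are forced to coincide and to lie in $\{d,d+1\}$. (The symmetric argument with $\mathbbm{k}\otimes_\L-$ handles $\L^{\mathrm{op}}$.)

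Finally, for the equivalence of (1)--(4), I would invoke Corollary \ref{claim_49}(2) three times: over $\Le$, condition (1) says $\Pdim_{\Le}\L=d=\Gdim_{\Le}\L$, which by Corollary \ref{claim_49}(2) is equivalent to (2), $\Omega_{\Le}^{d}(\L)\in\Le\modp$; likewise over $\L$, $\Pdim_\L\mathbbm{k}=d=\Gdim_\L\mathbbm{k}$ is equivalent to (3), and over $\L^{\mathrm{op}}$ to (4). Since the three periodic dimensions agree by the previous paragraph, (1) $\Leftrightarrow$ (2) $\Leftrightarrow$ (3) $\Leftrightarrow$ (4) follows. The main obstacle I anticipate is making the separability step fully rigorous — specifically, checking that $P_\bullet\otimes_\L\mathbbm{k}$ really is a \emph{minimal} projective resolution of $\mathbbm{k}$ (so that syzygies are literally computed, not just up to projective summands) and that nonzero projective $\Le$-summands do not collapse to zero after tensoring; separability of $\mathbbm{k}$ is exactly what guarantees $\mathbbm{k}\otimes_\L\mathbbm{k}\neq 0$ and keeps the resolution minimal, but I would want to state this carefully, perhaps phrasing the syzygy correspondence only up to projective summands and then noting that this suffices because taking $\modp$-parts is insensitive to projective summands.
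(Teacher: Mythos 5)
Your overall strategy (transport a minimal $\Le$-projective resolution of $\L$ through $-\otimes_\L\mathbbm{k}$ and $\mathbbm{k}\otimes_\L-$, then apply Corollary \ref{claim_49} over $\Le$, $\L$ and $\L^{\rm op}$) is the same as the paper's, but the step in which you conclude that the three periodic dimensions coincide is a non sequitur. What you actually establish is $\Pdim_{\L}\mathbbm{k}\le\Pdim_{\Le}\L$ together with $d\le\Pdim_{\L}\mathbbm{k}\le d+1$ and $d\le\Pdim_{\Le}\L\le d+1$; these constraints do not force equality, since $\Pdim_{\Le}\L=d+1$ and $\Pdim_{\L}\mathbbm{k}=d$ is compatible with all of them. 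The missing implication is exactly the hard direction $\Pdim_{\Le}\L=d+1\Rightarrow\Pdim_{\L}\mathbbm{k}=\Pdim_{\L^{\rm op}}\mathbbm{k}=d+1$ (equivalently $\Pdim_{\Le}\L\le\Pdim_{\L}\mathbbm{k}$), and without it your equivalence of (1), (3), (4) also collapses, because you derive it from the (unproved) coincidence of the dimensions. (A minor side remark: in proving $\Pdim_{\L}\mathbbm{k}\le n$ you should use that $\Omega_{\Le}^{n}(\L)$ is genuinely periodic, since $n$ is the periodic dimension; if the isomorphism were only up to projective summands you would only get $\le n+1$.)

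The fix is the ingredient you already list in your ``backbone'' but never deploy at the critical point, and it is how the paper argues: if $\Pdim_{\Le}\L=d+1$, then by Theorem \ref{claim_10} (equivalently Corollary \ref{claim_49}(2) over $\Le$, using $\Gdim_{\Le}\L=d$) the syzygy $\Omega_{\Le}^{d}(\L)$ has a nonzero projective direct summand $P$; by minimality of the tensored resolutions one has $\Omega_{\L}^{d}(\mathbbm{k})=\Omega_{\Le}^{d}(\L)\otimes_\L\mathbbm{k}$ and $\Omega_{\L^{\rm op}}^{d}(\mathbbm{k})=\mathbbm{k}\otimes_\L\Omega_{\Le}^{d}(\L)$, and $P\otimes_\L\mathbbm{k}$, $\mathbbm{k}\otimes_\L P$ are nonzero projective direct summands of these; then Corollary \ref{claim_49}(2) over $\L$ and $\L^{\rm op}$ (with $\Gdim_{\L}\mathbbm{k}=\Gdim_{\L^{\rm op}}\mathbbm{k}=d$) forces $\Pdim_{\L}\mathbbm{k}=\Pdim_{\L^{\rm op}}\mathbbm{k}=d+1$. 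Together with your inequality $\Pdim_{\L}\mathbbm{k},\Pdim_{\L^{\rm op}}\mathbbm{k}\le\Pdim_{\Le}\L$ in the case $\Pdim_{\Le}\L=d$, this gives the coincidence, and then (1)--(4) follow from Corollary \ref{claim_49}(2) applied over the three rings as you indicate. Finally, you propose to cite the minimality of $P_\bullet\otimes_\L\mathbbm{k}$ and $\mathbbm{k}\otimes_\L P_\bullet$ as folklore; since the whole theorem hinges on it, you should prove it, which takes only two lines from separability: $J(\Le)=J(\L)\otimes_k\L^{\rm op}+\L\otimes_kJ(\L^{\rm op})$ and $\Le/J(\Le)\cong\mathbbm{k}\otimes_k\mathbbm{k}^{\rm op}$, so the complex $\mathbbm{k}\otimes_\L P_\bullet\otimes_\L\mathbbm{k}$ has zero differentials, which is precisely minimality of both tensored resolutions.
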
 

\begin{proof}
We need only verify that $\Pdim_{\Le} \L = \Pdim_{\L}\mathbbm{k} = \Pdim_{\L^{\rm op}}\mathbbm{k}$.
It follows from Propositions \ref{Veliche_2006_2.4.2} and \ref{claim_44} that $\Gdim_{\Le}\L = \Gdim_{\L}\mathbbm{k} = \Gdim_{\L^{\rm op}}\mathbbm{k} = d$. 
Moreover, $\Pdim_{\Le} \L$ is finite by definition, and $\Pdim_{\L}\mathbbm{k}$ and $\Pdim_{\L^{\rm op}}\mathbbm{k}$ are both finite by Lemma \ref{claim_42}, which will be proved in the next section.
Thus Corollary \ref{claim_49} implies that 
\begin{align*}
    d \  \leq  \ \Pdim_{\Le} \L, \ \Pdim_{\L}\mathbbm{k}, \ \Pdim_{\L^{\rm op}}\mathbbm{k} \ \leq \  d+1.
\end{align*}

We claim that $\Pdim_{\Le} \L = d+1$ implies $\Pdim_\L \mathbbm{k}= d+1 =\Pdim_{\L^{\rm op}} \mathbbm{k}$.
Since $\mathbbm{k}$ is separable, one has that
\begin{align*}
    J(\Le) = J(\L)\otimes_k \L^{\rm op} + \L \otimes_k J(\L^{\rm op}) \mbox{\quad and \quad } \Le /J(\Le) \cong \mathbbm{k}\otimes_k \mathbbm{k}^{\rm op}.
\end{align*}
Hence if $P_\bullet \rightarrow \L$ is a minimal projective resolution of $\L$ over $\Le$, then the following complex induced by the tensor functor $\Le /J(\Le) \otimes_{\Le} -$ has trivial differentials:
\begin{align*} 
    \mbox{$\mathbbm{k} \otimes_{\L}  P_\bullet \otimes_{\L} \mathbbm{k} \rightarrow \mathbbm{k} \otimes_{\L} \L \otimes_{\L} \mathbbm{k}$.}
\end{align*}
This implies that the projective resolutions 
\begin{align}\label{eq_10}
    \mbox{
$P_{\bullet} \otimes_{\L} \mathbbm{k} \rightarrow \L \otimes_{\L} \mathbbm{k} = {}_{\L}\mathbbm{k}$ \quad  and \quad  $\mathbbm{k} \otimes_{\L} P_{\bullet}  \rightarrow \mathbbm{k} \otimes_{\L} \L = {}_{\L^{\rm op}}\mathbbm{k}$.}
\end{align}
are both minimal. 
We thus conclude that if $\Omega_{\Le}^d(\L)$ has a non-zero projective direct summand, then so do 
\begin{align*} 
    \Omega_{\L}^d(\mathbbm{k}) = \Omega_{\Le}^{d}(\L) \otimes_{\L} \mathbbm{k}  \quad  \mbox{ and } \quad   \Omega_{\L^{\rm op}}^d(\mathbbm{k}) = \mathbbm{k} \otimes_{\L} \Omega_{\Le}^{d}(\L).
\end{align*}
Then Corollary \ref{claim_49} enables us to obtain the desired statement.

To complete the proof, it is enough to check that $\Pdim_{\Le} \L = d$ implies  $\Pdim_{\L}\mathbbm{k} = d = \Pdim_{\L^{\rm op}} \mathbbm{k}$. 
But, this is trivial because of the minimality of the projective resolutions (\ref{eq_10}).
\end{proof}
%%%%%%%%%%%%%%%%%%%%%%%%%%%%%%%%%%%%
%           statement ↑
%%%%%%%%%%%%%%%%%%%%%%%%%%%%%%%%%%%%

Note that $\mathbbm{k}$ is separable when $\L$ is an algebra over a perfect field or a bound quiver algebra over a field.
We end this section with an example of $d$-Gorenstein algebras $\L$ with $\Pdim_{\Le} \L = d+1$ and $\projdim_{\Le} \L = \infty$.

%%%%%%%%%%%%%%%%%%%%%%%%%%%%%%%%%%%%
%           Example ↓
%%%%%%%%%%%%%%%%%%%%%%%%%%%%%%%%%%%%
\begin{example} \label{example_4}
{\rm 
Consider the following disconnected quiver $Q$:
\begin{align*}
    \xymatrix{0 \ar@(ul,dl)_-{\beta}  & -1
    }
\end{align*}
Let $I$ be the ideal of $kQ$ generated by $\beta^2$, and let $\L = kQ/I$.
Since $\L$ is a monomial algebra isomorphic to the product of the periodic algebra $k[x]/(x^2)$ and the simple self-injective algebra $k$, it follows that  $\L$ is self-injective and hence eventually periodic.
Recall that we denote by $S_i$ the simple $\L$-module corresponding to the vertex $i$.
Since 
\begin{align*}
    \projdim_{\L}S_i = 
    \begin{cases}
    0 & 
    \mbox{ if $i = -1$, }\\
    \infty & \mbox{ if $i = 0$}
    \end{cases}
    \quad \mbox{and} \quad 
    \Pdim_{\L}S_i = 
    \begin{cases}
    1 & 
    \mbox{ if $i = -1$, }\\
    0 & \mbox{ if $i = 0$},
    \end{cases}
\end{align*}
we have that
\begin{align*}
    \Pdim_{\Le} \L = \Pdim_{\L}\mathbbm{k} = 
    \max\left\{\, \Pdim_{\L}S_i \,\mid\, i = -1, 0 \,\right\}
    = 1,
\end{align*}
where the first and the second equality are obtained from Theorem \ref{claim_45} and Corollary \ref{claim_48}, respectively. 
}\end{example}
%%%%%%%%%%%%%%%%%%%%%%%%%%%%%%%%%%%%
%           Example ↑
%%%%%%%%%%%%%%%%%%%%%%%%%%%%%%%%%%%%

The next example is inspired by \cite[Section 2.3]{Dotsenko-Gelinas-Tamaroff_2023}.

%%%%%%%%%%%%%%%%%%%%%%%%%%%%%%%%%%%%
%           Example ↓
%%%%%%%%%%%%%%%%%%%%%%%%%%%%%%%%%%%%
\begin{example} \label{example_1}
{\rm 
For any positive integer $d$, consider the following quiver $Q$:
\begin{align*}
\xymatrix{
    d \ar@(ul,dl)_-{\beta}  \ar[r]^-{\alpha_{d}} & d-1 \ar[r]^-{\alpha_{d-1}}& d-2 \ar[r]  & \cdots \ar[r] & 1\ar[r]^-{\alpha_{1}} & 0
    }
\end{align*}
Let $I$ be the ideal of $kQ$ generated by $\{ \beta^2, \alpha_{i-1}\alpha_{i} \mid 2 \leq i \leq d  \}$, and let $\G = kQ/I$. 
Thanks to \cite[Theorem 2.9]{Dotsenko-Gelinas-Tamaroff_2023}, it follows that the monomial algebra $\G$ is $d$-Gorenstein and hence eventually periodic.
Moreover, one has that 
\begin{align*}
    \projdim_{\G}S_i = 
    \begin{cases}
    i & 
\mbox{ if $0 \leq i \leq  d-1$, }\\
    \infty & \mbox{ if $i = d$}
    \end{cases}
\end{align*}

and 

\begin{align*}
    \Pdim_{\G}S_i = 
    \begin{cases}
    i+1 & 
    \mbox{ if $0 \leq i \leq  d-1$, }\\
    d+1 & \mbox{ if $i = d$}.
    \end{cases}
\end{align*}
Note that the simple projective $\G$-module $S_0 ( = \G \alpha_1)$ is a non-zero projective direct summand of $\Omega_{\G}^{d}(S_d)$.
As in Example \ref{example_4}, one concludes that
\begin{align*}
    \Pdim_{\Ge}\G = \Pdim_{\G}\mathbbm{k} = \max\left\{\, \Pdim_{\G}S_i \mid 0 \leq i \leq  d\,\right\}= d+1.
\end{align*}
}\end{example}
%%%%%%%%%%%%%%%%%%%%%%%%%%%%%%%%%%%%
%           Example ↑
%%%%%%%%%%%%%%%%%%%%%%%%%%%%%%%%%%%%

%%%%%%%%%%%%%%%%%%%%%%%%%%%%%%%%%%%%
%           Section ↑
%%%%%%%%%%%%%%%%%%%%%%%%%%%%%%%%%%%%

%%%%%%%%%%%%%%%%%%%%%%%%%%%%%%%%%%%%
%           Section ↓
%%%%%%%%%%%%%%%%%%%%%%%%%%%%%%%%%%%%
\section{Homological properties of eventually periodic algebras} \label{Homological properties of eventually periodic algebras}
%%%%%%%%%%%%%%%%%%%%%%%%%%%%%%%%%%%%

This section reveals some basic homological properties of eventually periodic algebras. 
We show that a lot of homological conjectures hold for this class of algebras. 
Moreover, as promised in Section \ref{The_case_of_algebras}, we characterize eventually periodic Gorenstein algebras and show that eventually periodic algebras are both left and right weakly Gorenstein.

First of all, we focus on the {\it periodicity conjecture}, which states that an algebra must be periodic if all its simple modules are periodic.
We refer to \cite[Section 1]{Erdmann-Skowronski_2015} for more information on the conjecture.

%%%%%%%%%%%%%%%%%%%%%%%%%%%%%%%%%%%%
%           statement ↓
%%%%%%%%%%%%%%%%%%%%%%%%%%%%%%%%%%%%
\begin{proposition} \label{claim_3}
An eventually periodic connected algebra $\L$ is periodic if and only if all the simple $\L$-modules are periodic. 
\end{proposition}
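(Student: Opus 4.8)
The plan is to prove the contrapositive-free direction carefully, since one implication is immediate. The ``only if'' part is trivial: if $\L$ is periodic as a $\L$-bimodule, then for every simple $\L$-module $S$ the projective resolution $P_\bullet \otimes_\L S \to S$ coming from a periodic bimodule resolution shows $\Omega_\L^{p}(S) = \Omega_{\Le}^{p}(\L) \otimes_\L S \cong \L \otimes_\L S = S$, so $S$ is periodic (of period dividing that of $\L$). The substance is the ``if'' direction, and here I would first invoke Proposition \ref{claim_3}'s hypothesis that $\L$ is eventually periodic together with the assumption that every simple module is periodic.

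The key idea is to compare periodic dimensions of $\L$ over $\Le$ and of $\mathbbm{k} = \L/J(\L)$ over $\L$. If every simple $\L$-module is periodic, then $\mathbbm{k}$, being a finite direct sum of simples, is periodic, so $\Pdim_\L \mathbbm{k} = 0$; in particular $\mathbbm{k}$ has finite periodic dimension, so by Theorem \ref{claim_6} (or its finitely generated version) $\mathbbm{k}$ is Gorenstein projective, hence has finite Gorenstein projective dimension, namely $\Gpd_\L \mathbbm{k} = 0$. But $\Gpd_\L \mathbbm{k} = \Gdim_\L \mathbbm{k}$, and Proposition \ref{Dotsenko-Gelinas-Tamaroff_2023_Proposition 2.4} then tells us $\L$ is Gorenstein with $\injdim_\L \L = \Gdim_\L \mathbbm{k} = 0$; that is, $\L$ is self-injective. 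Now $\L$ is an eventually periodic self-injective (hence $0$-Gorenstein) algebra, so Theorem \ref{claim_10} with $d = 0$ gives $\Pdim_{\Le}\L \leq 1$. It remains to rule out $\Pdim_{\Le}\L = 1$ and force it to be $0$, i.e.\ to show $\L$ is periodic.

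For that final step I would use connectedness together with Theorem \ref{claim_45}. Since $\L$ is a bound quiver algebra over $k$, the semisimple quotient $\mathbbm{k}$ is separable, so Theorem \ref{claim_45} applies with $d = 0$: $\Pdim_{\Le}\L = \Pdim_\L \mathbbm{k}$, and the common value is $0$ or $1$. Because every simple is periodic, $\Pdim_\L \mathbbm{k} = 0$, hence $\Pdim_{\Le}\L = 0$, which is exactly the statement that $\L$ is a periodic algebra. (Alternatively, one can argue directly: condition (3) of Theorem \ref{claim_45}, that the $0$-th syzygy of ${}_\L\mathbbm{k}$ — namely $\mathbbm{k}$ itself — lies in $\L\modp$, holds since a periodic module over a non-semisimple self-injective algebra has no projective summand; this gives condition (1), $\Pdim_{\Le}\L = 0$.) I expect the main obstacle to be bookkeeping around the hypotheses of the results being cited — specifically making sure that separability of $\mathbbm{k}$ is legitimately available (it is, for bound quiver algebras), and checking the degenerate edge cases, e.g.\ that $\L$ is not semisimple when we claim periodic modules have no projective summands; if $\L$ were semisimple it would already be trivially periodic, so connectedness plus ``some simple is periodic, i.e.\ $\Omega_\L(S)\ne 0$'' handles this. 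No genuinely hard computation is needed; the proof is an assembly of Theorems \ref{claim_6}, \ref{claim_10}, \ref{claim_45} and Propositions \ref{Veliche_2006_2.4.2}, \ref{Dotsenko-Gelinas-Tamaroff_2023_Proposition 2.4}.
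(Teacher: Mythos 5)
Your \lq\lq only if\rq\rq\ direction is fine, but the \lq\lq if\rq\rq\ direction breaks at its first substantive step. You argue that since $\mathbbm{k}=\L/J(\L)$ is periodic, \lq\lq by Theorem \ref{claim_6}\rq\rq\ it is Gorenstein projective, so $\Gpd_{\L}\mathbbm{k}=0$. Theorem \ref{claim_6} runs in the opposite direction: finiteness of the Gorenstein projective dimension is one of its \emph{hypotheses}, not a conclusion, so it cannot convert $\Pdim_{\L}\mathbbm{k}=0$ into $\Gpd_{\L}\mathbbm{k}<\infty$. In general a periodic module need not be Gorenstein projective: in Example \ref{example_2} the module $\Omega_{\L}^{3}(S_4)=S_1\oplus S_3\oplus S_5$ is periodic, yet it has the summand $S_1$ of finite projective dimension $1$ and hence is not Gorenstein projective. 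Indeed, Proposition \ref{claim_39} shows that for eventually periodic algebras the implication \lq\lq periodic $\Rightarrow$ Gorenstein projective (up to projective summands)\rq\rq\ is \emph{equivalent} to $\L$ being Gorenstein, so assuming it is assuming exactly the nontrivial point. What your argument needs here --- that a connected algebra all of whose simple modules are periodic is self-injective --- is the Green--Snashall--Solberg theorem that the paper's proof cites; it does not follow formally from the machinery of this paper.

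There is also a secondary problem in your endgame: Theorem \ref{claim_45} requires $\mathbbm{k}$ to be separable, but the proposition assumes nothing of the sort ($\L$ is an arbitrary finite dimensional algebra over an arbitrary field, not necessarily a bound quiver algebra), and your parenthetical alternative still routes through Theorem \ref{claim_45}. The paper's proof avoids both issues: after invoking Green--Snashall--Solberg to get self-injectivity, Proposition \ref{claim_44} gives $\Gdim_{\Le}\L=0$, and Corollary \ref{claim_49} (2) is applied directly to the $\Le$-module $\L$, which is indecomposable by connectedness and not projective over $\Le$ (otherwise $\L$ would be semisimple and no simple module would be periodic); hence $\L\in\Le\modp$ and $\Pdim_{\Le}\L=0$, with no separability assumption needed.
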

\begin{proof}
It suffices to show the\,\lq\lq if\ \!\rq\rq\,part.
By \cite[Theorem 1.4]{Green-Snashall-Solberg_2003}, the algebra $\L$ satisfying the required condition is self-injective, which implies that $\Gdim_{\Le}\L = 0$. 
Applying Corollary \ref{claim_49} (2) to the indecomposable Gorenstein projective $\Le$-module $\L$, we conclude that $\L$ is periodic.
\end{proof}
%%%%%%%%%%%%%%%%%%%%%%%%%%%%%%%%%%%%
%           statement ↑
%%%%%%%%%%%%%%%%%%%%%%%%%%%%%%%%%%%%

We now prepare the following easy lemma, which will be frequently used from now on.

%%%%%%%%%%%%%%%%%%%%%%%%%%%%%%%%%%%%
%           statement ↓
%%%%%%%%%%%%%%%%%%%%%%%%%%%%%%%%%%%%
\begin{lemma} \label{claim_42}  
Let $\L$ be an $(n,p)$-eventually periodic algebra. Then we have the following statements.
\begin{enumerate}
    \item The endofunctor $\Omega_{\L}$ on $\L\sMod$ satisfies that $\Omega_{\L}^{n+p} \cong \Omega_{\L}^{n}$.
    \item The endofunctor $\Omega_{\Lop}$ on $\Lop\sMod$ satisfies that $\Omega_{\Lop}^{n+p} \cong \Omega_{\Lop}^{n}$.
\end{enumerate} 
In particular, for any $\L$-module $M$ (resp. $\Lop$-module $N$), we have  
\begin{align*}
    \Pdim_{\L} M \leq n+1 \quad \left(\mbox{resp. } \Pdim_{\L^{\rm op}} N \leq n+1 \right).
\end{align*}
Moreover, the period of the first periodic syzygy of $M$ (resp. $N$) divides $p$.
\end{lemma}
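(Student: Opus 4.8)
The key input is the construction recalled just above Proposition~\ref{claim_52}: if $P_\bullet \xrightarrow{\varepsilon} \L$ is a projective resolution of $\L$ over $\Le$, then for any $\L$-module $M$ the complex $P_\bullet \otimes_\L M \xrightarrow{\varepsilon \otimes_\L \id_M} M$ is a projective resolution of $M$ over $\L$, and moreover $\Omega_i(P_\bullet \otimes_\L M) = \Omega_i(P_\bullet)\otimes_\L M$ for all $i\ge 0$. The plan is to exploit this to transport the bimodule periodicity to modules. First I would fix a projective $\Le$-resolution $P_\bullet$ of $\L$; since $\L$ is $(n,p)$-eventually periodic, $\Omega_{n+p}(P_\bullet)\cong \Omega_n(P_\bullet)$ in $\Le\sMod$, i.e.\ they differ by a projective summand in $\Le\Mod$ (using \cite[Proposition 1.44]{AusBri69} as in the proof of Lemma~\ref{claim_1}), say $\Omega_{n+p}(P_\bullet)\oplus Q_1 \cong \Omega_n(P_\bullet)\oplus Q_2$ with $Q_1,Q_2\in\Le\proj$.

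\textbf{The functor statements (1) and (2).} For (1), I would observe that the assignment $M\mapsto \Omega_\L^n(M)$ agrees, up to the stable category, with $M\mapsto \Omega_n(P_\bullet\otimes_\L M)=\Omega_n(P_\bullet)\otimes_\L M$; this is where the syzygy-functor formalism ($\Omega_\L:\L\sMod\to\L\sMod$ well defined, recalled in Section~\ref{Periodic dimension}) is needed, since an arbitrary projective resolution computes syzygies only up to projective summands, which vanish in $\L\sMod$. Applying $-\otimes_\L M$ to the $\Le$-isomorphism $\Omega_{n+p}(P_\bullet)\oplus Q_1\cong\Omega_n(P_\bullet)\oplus Q_2$ and noting that $Q_i\otimes_\L M$ is a projective $\L$-module, I get $\Omega_\L^{n+p}(M)\cong\Omega_\L^n(M)$ in $\L\sMod$, naturally in $M$; hence $\Omega_\L^{n+p}\cong\Omega_\L^n$ as endofunctors of $\L\sMod$. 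Statement (2) is entirely symmetric: an $(n,p)$-eventually periodic algebra is also $(n,p)$-eventually periodic over $(\Le)^{\rm op}=(\Lop)^{\rm e}$ (the regular bimodule $\L$ is self-dual under swapping the two sides, so its $\Le$-resolution gives a $(\Lop)^{\rm e}$-resolution of $\Lop$ with the same syzygy pattern), and tensoring on the other side with a $\Lop$-module $N$ gives $\Omega_{\Lop}^{n+p}\cong\Omega_{\Lop}^n$.

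\textbf{The ``in particular'' and ``moreover'' clauses.} For any $\L$-module $M$, statement (1) gives $\Omega_\L^{n+p}(M)\cong\Omega_\L^n(M)$ in $\L\sMod=\L\sMod$, so $\Omega_\L^n(M)$ has $\Omega_\L^{n+p}(M)$ as syzygy-periodic data; lifting this isomorphism through one more syzygy exactly as in Lemma~\ref{claim_1} (apply $\Omega_\L$, using \cite[Proposition 1.44]{AusBri69}) yields $\Omega_\L^{n+1}(M)\cong\Omega_\L^{n+p+1}(M)$ in $\L\Mod$, whence $\Pdim_\L M\le n+1$; the same argument with (2) gives $\Pdim_{\Lop}N\le n+1$. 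Finally, for the statement that the period of the first periodic syzygy of $M$ divides $p$: once $\Omega_\L^{m}(M)$ is periodic with $m=\Pdim_\L M\le n+1$, it satisfies $\Omega_\L^{m+p}(M)\cong\Omega_\L^m(M)$ (this again follows from the $\L\sMod$-isomorphism in (1) after lifting across projective summands, since $\Omega_\L^m(M)$ has no obstruction once $m\ge n$—or more directly, a periodic module with a syzygy-self-isomorphism of degree $p$ in $\L\Mod$ has period dividing $p$), so the least period divides $p$; same for $N$. I expect the main obstacle to be bookkeeping the distinction between isomorphisms in $\L\Mod$ and in $\L\sMod$ cleanly—in particular making sure the naturality in (1) and the projective-summand lifting in the last clause are stated precisely—rather than any conceptual difficulty.
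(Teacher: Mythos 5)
Your proposal is correct and follows essentially the same route as the paper: transport the bimodule periodicity by tensoring the $\Le$-projective resolution of $\L$ over $\L$ (i.e.\ the identification $\Omega_{\L}^{i}\cong\Omega_{\Le}^{i}(\L)\otimes_{\L}-$ on $\L\sMod$, and symmetrically for $\Lop$), and then deduce the bound on $\Pdim$ and the divisibility of the period from (the argument of) Lemma~\ref{claim_1}. The only cosmetic difference is that you work with an arbitrary bimodule resolution and track projective summands stably, while the paper uses the minimal resolution so that $\Omega_{\Le}^{n+p}(\L)\cong\Omega_{\Le}^{n}(\L)$ holds already in $\Le\Mod$; for the final divisibility clause you may simply invoke Lemma~\ref{claim_1} (whose statement already contains it) rather than re-deriving it, which tidies the slightly loose handling of the case $\Pdim_{\L}M<n$.
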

\begin{proof} 
We only prove (1); the proof of (2) is similar. 
Since $\Omega_\L^i \cong \Omega_{\Le}^{i}(\L) \otimes_\L -$ as endofunctors on $\L\sMod$ for every $i \geq 0$, 
there are isomorphisms of endofunctors on $\L\sMod$ \[\Omega_\L^{n+p} \cong \Omega_{\Le}^{n+p}(\L) \otimes_\L - \cong \Omega_{\Le}^{n}(\L) \otimes_\L -  \cong \Omega_\L^{n}.\]
The remaining is a consequence of Lemma \ref{claim_1}.
\end{proof}
%%%%%%%%%%%%%%%%%%%%%%%%%%%%%%%%%%%%
%           statement ↑
%%%%%%%%%%%%%%%%%%%%%%%%%%%%%%%%%%%%

The lemma enables us to decide whether an eventually periodic algebra is Gorenstein or not.

%%%%%%%%%%%%%%%%%%%%%%%%%%%%%%%%%%%%
%           statement ↓
%%%%%%%%%%%%%%%%%%%%%%%%%%%%%%%%%%%%
\begin{proposition} \label{claim_39} 
Let $\L$ be an eventually periodic algebra. 
Then the following conditions are equivalent.
\begin{enumerate}
    \item  $\L$ is a Gorenstein algebra.
    \item A finitely generated $\L$-module $M$ is periodic if and only if $M$ is Gorenstein projective without non-zero projective direct summands.
\end{enumerate}
\end{proposition}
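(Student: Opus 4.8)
The plan is to prove the equivalence $(1)\Leftrightarrow(2)$ by exploiting the fact that over an eventually periodic algebra every module has finite periodic dimension (Lemma \ref{claim_42}), and then coupling this with the characterization of periodic dimension via Gorenstein projective dimension from Theorem \ref{claim_6} / Corollary \ref{claim_49}.

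For the implication $(1)\Rightarrow(2)$, assume $\L$ is $d$-Gorenstein. First I would observe that one direction of the stated equivalence in $(2)$ is automatic: a periodic module $M$ satisfies $\Omega_\L^p(M)\cong M$, so it is $p$-strongly Gorenstein projective by Corollary \ref{claim_7}(1) (its Gorenstein projective dimension is finite since $\L$ is Gorenstein), hence Gorenstein projective; and being periodic it has no non-zero projective direct summand (a projective summand would force $\projdim$ to be infinite only if it were non-projective, so more carefully: a periodic module is in $R\Modp$ because a projective direct summand $P$ of $M$ would give, after taking $p$ syzygies, $\Omega^p(P)=0$ as a summand of $\Omega^p(M)\cong M$, contradicting indecomposable-summand bookkeeping unless $P=0$). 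Conversely, suppose $M$ is finitely generated Gorenstein projective with no non-zero projective direct summand. Since $\L$ is eventually periodic, $M$ is eventually periodic with finite periodic dimension, and $\Gpd_\L M = 0$ because $M$ is Gorenstein projective. Now Corollary \ref{claim_49}(2) (applied with $r=0$) says $\Pdim_\L M = 0$ precisely when $\Omega_\L^0(M)=M$ lies in $\L\modp$, i.e. has no non-zero projective direct summand — which is exactly our hypothesis. Hence $\Pdim_\L M = 0$, meaning $M$ is periodic.

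For the implication $(2)\Rightarrow(1)$, the strategy is contrapositive: assume $\L$ is eventually periodic but not Gorenstein, and produce a finitely generated Gorenstein projective $\L$-module with no non-zero projective direct summand that is \emph{not} periodic. The natural candidate is a high enough syzygy of a suitable module. Since $\L$ is not Gorenstein, by Proposition \ref{Veliche_2006_2.4.2} there is some finitely generated $\L$-module $N$ with $\Gpd_\L N = \infty$; equivalently (using the eventual periodicity and Corollary \ref{claim_49}, or directly), there is a module whose Gorenstein projective dimension is infinite, so its syzygies are never Gorenstein projective — but wait, that would make the syzygies fail to be Gorenstein projective, which is the wrong direction. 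Instead I should use the other characterization: $\L$ not Gorenstein means, via Proposition \ref{Dotsenko-Gelinas-Tamaroff_2023_Proposition 2.4}, that $\mathbbm{k}=\L/J(\L)$ has infinite Gorenstein dimension. But $\mathbbm{k}$ \emph{is} eventually periodic (Lemma \ref{claim_42}), so by Theorem \ref{claim_6}, if $\Pdim_\L\mathbbm{k}<\infty$ and $\Gpd_\L\mathbbm{k}<\infty$ we would be done — the point is that $\Pdim_\L\mathbbm{k}<\infty$ always, while $\Gpd_\L\mathbbm{k}=\infty$. Take $n = \Pdim_\L\mathbbm{k}$ and let $M$ be the non-projective part of $\Omega_\L^n(\mathbbm{k})$, stripped of projective summands; then $M$ is periodic, hence (by the already-established easy direction, or by Corollary \ref{claim_7}) $M$ would be Gorenstein projective, forcing $\Gpd_\L\mathbbm{k}\le n<\infty$, a contradiction. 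So no such decomposition is possible; this shows that if $(2)$ holds then every periodic module is Gorenstein projective, and running the argument shows $\mathbbm{k}$ must have finite Gorenstein dimension, i.e. $\L$ is Gorenstein. The main obstacle is setting up this last contradiction cleanly — I need to make sure the logical direction of $(2)$ being used (periodic $\Rightarrow$ Gorenstein projective) genuinely forces finiteness of $\Gpd_\L\mathbbm{k}$, which requires that the first periodic syzygy of $\mathbbm{k}$, once it is known to be Gorenstein projective, bounds $\Gpd_\L\mathbbm{k}$ by its degree via formula \eqref{eq_2} together with $\L\GProj\subseteq{}^\perp(\L\Proj)$.

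I would therefore structure the proof as: (i) the easy direction of $(2)$, that a finitely generated periodic module over a Gorenstein (indeed just eventually periodic) algebra is Gorenstein projective without projective summands, deduced from Corollary \ref{claim_7}(1); (ii) $(1)\Rightarrow(2)$, completing the converse direction via Corollary \ref{claim_49}(2); (iii) $(2)\Rightarrow(1)$ by contraposition using $\mathbbm{k}$, Lemma \ref{claim_42}, Proposition \ref{Dotsenko-Gelinas-Tamaroff_2023_Proposition 2.4}, and formula \eqref{eq_2}. The trickiest bookkeeping will be confirming that stripping projective summands from $\Omega_\L^n(\mathbbm{k})$ at the critical degree $n=\Pdim_\L\mathbbm{k}$ yields a module that is still periodic (handled by Corollary \ref{claim_49}(1), which says $\Omega_R^{r+p}(M)\oplus P\cong\Omega_R^r(M)$) and that its Gorenstein projectivity, via \eqref{eq_2}, propagates back to bound $\Gpd_\L\mathbbm{k}$.
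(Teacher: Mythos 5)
Your overall plan coincides with the paper's: get (1)$\Rightarrow$(2) from Corollary \ref{claim_49} after using Proposition \ref{Veliche_2006_2.4.2} and Lemma \ref{claim_42} to see that every finitely generated module has finite Gorenstein projective and finite periodic dimension, and get (2)$\Rightarrow$(1) by applying the equivalence to $\mathbbm{k}=\L/J(\L)$ and invoking Proposition \ref{Dotsenko-Gelinas-Tamaroff_2023_Proposition 2.4}. However, two of your steps are genuinely flawed as written. First, the ``bookkeeping'' argument that a periodic module cannot have a non-zero projective direct summand is invalid, and the parenthetical claim that the forward half of (2) holds over any eventually periodic algebra is false. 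Over $\L=kQ/R^{2}$ of Example \ref{example_2} (an eventually periodic monomial algebra by Proposition \ref{claim_51}, since all simples have finite periodic dimension), one computes $\Omega_{\L}(S_0\oplus S_2\oplus S_4)\cong S_1\oplus S_3\oplus S_5$ and $\Omega_{\L}^{2}(S_0\oplus S_2\oplus S_4)\cong S_0\oplus S_2\oplus S_4$; so $S_0\oplus S_2\oplus S_4$ is periodic yet contains the projective summand $S_0=P_0$ --- the projective summand reappears as a syzygy of a non-projective summand, which is exactly what your cancellation argument overlooks. (Indeed, ``periodic $\Rightarrow$ Gorenstein projective without projective summands'' for all finitely generated modules is equivalent to Gorensteinness; that is the whole point of the proposition.) In the Gorenstein case the conclusion you want is true, but it must be extracted from Corollary \ref{claim_49}: if $M$ is periodic and $\Gpd_{\L}M=r<\infty$, then $r\le n=0$ forces $n=r$, and Corollary \ref{claim_49}(2) then gives $\Omega_{\L}^{r}(M)=M\in\L\modp$.

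Second, in your step (iii) the appeal to ``the already-established easy direction, or Corollary \ref{claim_7}'' to conclude that the stripped module $M$ is Gorenstein projective is circular: both require finite Gorenstein projective dimension, which is precisely what is unknown once $\L$ is not assumed Gorenstein. Moreover, the claim that the non-projective part of $\Omega_{\L}^{n}(\mathbbm{k})$ is still periodic is unjustified and false in general: in the example above, $S_2\oplus S_4$ is not periodic, since every positive syzygy of it is isomorphic to $S_1\oplus S_3\oplus S_5$ or to $S_0\oplus S_2\oplus S_4$. The stripping is also unnecessary. The correct (and the paper's) argument assumes (2) directly: by Lemma \ref{claim_42}, $n:=\Pdim_{\L}\mathbbm{k}<\infty$, so $\Omega_{\L}^{n}(\mathbbm{k})$ is periodic; by hypothesis (2) it is Gorenstein projective, whence $\Gdim_{\L}\mathbbm{k}\le n<\infty$, and Proposition \ref{Dotsenko-Gelinas-Tamaroff_2023_Proposition 2.4} yields that $\L$ is Gorenstein. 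With these two repairs --- both available from results you already cite --- your proof becomes the paper's proof.
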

\begin{proof}
We first prove that (1) implies (2).
It follows from Proposition \ref{Veliche_2006_2.4.2} and Lemmas \ref{claim_1} and \ref{claim_42} that any finitely generated $\L$-modules $M$ satisfy that $\Gdim_\L M < \infty$ and $\Pdim_\L M < \infty$.
Therefore, the desired equivalence is a consequence of Corollary \ref{claim_49}.
Conversely, suppose that the equivalence in (2) holds.  
Since we know by Lemma \ref{claim_42} that $\Pdim_\L \L/J(\L) < \infty$, the equivalence implies that $\Gdim_\L \L/J(\L)  < \infty$. 
Thus Proposition \ref{Dotsenko-Gelinas-Tamaroff_2023_Proposition 2.4} finishes the proof.
\end{proof}
%%%%%%%%%%%%%%%%%%%%%%%%%%%%%%%%%%%%
%           statement ↑
%%%%%%%%%%%%%%%%%%%%%%%%%%%%%%%%%%%%

Recall that the {\it big finitistic dimension} of an algebra  $\L$ is defined as 
\[\Findim \L := \sup\{\projdim_{\L}M \mid  M \in \L\Mod \ \mbox{ and }\ \projdim_{\L}M <\infty \}\]
and the {\it little finitistic dimension} of $\L$ is defined to be 
\[\findim \L := \sup\{\projdim_{\L}M \mid  M \in \L\mod \ \mbox{ and }\ \projdim_{\L}M <\infty \}.\] 
It is conjectured that the little finitistic dimension of an arbitrary algebra is finite. 
This is known as the {\it finitistic dimension conjecture} and is still open. 
See \cite{Smalo_2000,Yamagata_book_1996,Zimmermann-Huisgen_1992} for more information on this and related homological conjectures.
We now observe that the finitistic dimension conjecture holds for eventually periodic algebras and their opposite algebras.

%%%%%%%%%%%%%%%%%%%%%%%%%%%%%%%%%%%%
%           statement ↓
%%%%%%%%%%%%%%%%%%%%%%%%%%%%%%%%%%%%
\begin{proposition}\label{claim_18}  
Let $\L$ be an $(n,p)$-eventually periodic algebra.  
Then 
\begin{align*}
    \Findim \L \leq n \quad \mbox{ and } \quad  \Findim \Lop \leq n.
\end{align*}
\end{proposition}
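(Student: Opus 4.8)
The plan is to show that any $\L$-module $M$ of finite projective dimension actually has $\projdim_\L M \le n$, and then invoke the symmetric statement for $\Lop$. First I would recall from Lemma~\ref{claim_42}(1) that the syzygy functor on $\L\sMod$ satisfies $\Omega_\L^{n+p}\cong\Omega_\L^{n}$. Given $M\in\L\Mod$ with $\projdim_\L M<\infty$, suppose for contradiction that $\projdim_\L M = m$ with $m>n$; then $\Omega_\L^{m}(M)$ is a non-zero projective $\L$-module while $\Omega_\L^{m+1}(M)=0$. From $\Omega_\L^{n+p}\cong\Omega_\L^{n}$ on $\L\sMod$ we get, after applying $\Omega_\L^{m-n}$, an isomorphism $\Omega_\L^{m+p}(M)\cong\Omega_\L^{m}(M)$ in $\L\sMod$; but $\Omega_\L^{m+p}(M)=0$ since $p>0$ and $\projdim_\L M=m$, so $\Omega_\L^{m}(M)$ is zero in the stable category, i.e.\ projective. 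That on its own is not yet a contradiction, so the cleaner route is to work one step lower.

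Here is the step I would actually carry out. Put $N:=\Omega_\L^{n}(M)$, which still has finite projective dimension, equal to $\max\{m-n,0\}$ where $m=\projdim_\L M$. It suffices to show $\projdim_\L N = 0$, i.e.\ $N$ is projective, for then $m\le n$. Now $\Omega_\L^{p}(N)\cong\Omega_\L^{n+p}(M)\cong\Omega_\L^{n}(M)=N$ in $\L\sMod$ by Lemma~\ref{claim_42}(1). By \cite[Proposition~1.44]{AusBri69} (as used in the proof of Lemma~\ref{claim_1}) this lifts to an isomorphism $\Omega_\L^{p}(N)\oplus P\cong N\oplus Q$ in $\L\Mod$ for suitable projectives $P,Q$; hence $N$ is, up to projective summands, $p$-periodic, so $N$ cannot have finite positive projective dimension (a periodic-up-to-projectives module of finite projective dimension is projective, since iterating $\Omega_\L$ on it would never reach $0$ unless it was already projective). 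Therefore $N$ is projective, giving $\projdim_\L M\le n$, and taking the supremum over all such $M$ yields $\Findim\L\le n$.

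Finally, since $\L$ is $(n,p)$-eventually periodic, Lemma~\ref{claim_42}(2) gives $\Omega_{\Lop}^{n+p}\cong\Omega_{\Lop}^{n}$ as endofunctors on $\Lop\sMod$, and the identical argument applied to $\Lop$-modules of finite projective dimension gives $\Findim\Lop\le n$. The only genuinely delicate point is the clean statement that a module which is periodic up to projective summands and has finite projective dimension must be projective; I would phrase this as follows: if $\Omega_\L^{p}(N)\oplus P\cong N\oplus Q$ with $P,Q$ projective and $\projdim_\L N=s<\infty$, then applying $\Omega_\L^{s}$ and using $\Omega_\L^{s+j}(N)=0$ for all $j\ge1$ when $s\ge1$ forces $\Omega_\L^{s}(N)$ (a non-zero projective) to be a summand of $\Omega_\L^{s}(Q)\oplus 0$, contradicting $s\ge 1$ unless one notes more carefully that $\Omega_\L^{s}$ kills projectives up to isomorphism; so in fact $s=0$. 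This is short but is the part to write out with care; everything else is formal manipulation with the established lemmas.
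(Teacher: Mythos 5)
Your proof is correct and takes essentially the same route as the paper: both rest on Lemma \ref{claim_42} together with the observation that, for $M$ of finite projective dimension, the stable periodicity of syzygies forces $\Omega_{\L}^{n}(M)$ to be projective, whence $\projdim_{\L}M \le n$, and the $\Lop$ case is symmetric. The paper is merely more economical, noting directly that $\Omega_{\L}^{n}(M)\cong\Omega_{\L}^{n+ip}(M)$ in $\L\sMod$ for all $i\ge 0$ and that the right-hand side vanishes for large $i$, so your detour through \cite[Proposition 1.44]{AusBri69} and the auxiliary claim about modules periodic up to projective summands, while valid, is not needed.
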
 

\begin{proof} 
We only show that $\Findim \L \leq n$; the other is similarly proved.
Let $M$ be a $\L$-module of finite projective dimension.
Lemma \ref{claim_42} implies that $\Omega_{\L}^{n}(M) \cong \Omega_{\L}^{n+ip}(M)$ in $\L\sMod$ for all $i \geq 0$, so that $\Omega_{\L}^{n}(M)$ must be projective.
\end{proof}
%%%%%%%%%%%%%%%%%%%%%%%%%%%%%%%%%%%%
%           statement ↑
%%%%%%%%%%%%%%%%%%%%%%%%%%%%%%%%%%%%

The following consequence of Proposition \ref{claim_18} says that {\it Gorenstein symmetric conjecture} \cite{Beligiannis_2005} holds for eventually periodic algebras.

%%%%%%%%%%%%%%%%%%%%%%%%%%%%%%%%%%%%
%           statement ↓
%%%%%%%%%%%%%%%%%%%%%%%%%%%%%%%%%%%%
\begin{proposition} \label{claim_19}  
Let $\L$ be an eventually periodic algebra. 
Then $\injdim_{\L}\L < \infty$ if and only if $\injdim_{\L^{\rm op}}\L < \infty$.
\end{proposition}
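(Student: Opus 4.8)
The plan is to deduce this from the finitistic-dimension bound of Proposition~\ref{claim_18} together with the characterization of Gorenstein algebras in Proposition~\ref{Dotsenko-Gelinas-Tamaroff_2023_Proposition 2.4}. Recall that for an algebra $\L$ one has $\injdim_{\L}\L < \infty$ if and only if $\L$ is a Gorenstein algebra, which by Proposition~\ref{Dotsenko-Gelinas-Tamaroff_2023_Proposition 2.4} is equivalent to $\Gdim_{\L}\mathbbm{k} < \infty$; and since $\L$ is Gorenstein precisely when $\Lop$ is Gorenstein (the definition of Gorenstein is left–right symmetric), the statement is really just the assertion that $\L$ is Gorenstein whenever one of the two injective dimensions is finite.

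First I would assume $\injdim_{\L}\L = d < \infty$ and show $\injdim_{\Lop}\L < \infty$. Write $\mathbbm{k} = \L/J(\L)$. By Proposition~\ref{claim_18}, the big finitistic dimensions of both $\L$ and $\Lop$ are bounded by $n$, where $\L$ is $(n,p)$-eventually periodic. The key point is that $\injdim_{\L}\L < \infty$ forces $\Gdim_{\L}\mathbbm{k} < \infty$ via Proposition~\ref{Dotsenko-Gelinas-Tamaroff_2023_Proposition 2.4}. Then I would transfer finiteness of the Gorenstein dimension to the opposite side: the isomorphism of graded vector spaces $\Ext_{\L}^{\bullet}(M,N) \cong \Ext_{\Le}^{\bullet}(\L, \Hom_k(M,N))$ from Section~\ref{The_case_of_algebras}, applied with $M$ running over simple $\L$-modules and with the duality $D$, relates vanishing of $\Ext_{\L}^{i}(\mathbbm{k}, \L)$ in high degrees to vanishing of $\Ext_{\Lop}^{i}(\mathbbm{k}, \L)$ in high degrees. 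Concretely, $\L$ being Gorenstein with $\injdim_{\L}\L = d$ means $\Ext_{\L}^{i}(\mathbbm{k},\L) = 0$ for $i > d$; dualizing and using $D(\L)$ one gets the corresponding statement over $\Lop$, hence $\injdim_{\Lop}\L < \infty$. Alternatively, and more cleanly, once $\Gdim_{\L}\mathbbm{k} < \infty$ we know $\L$ is a Gorenstein algebra, and Gorensteinness already includes $\injdim_{\Lop}\L < \infty$ by definition (together with $\injdim_{\L}R = \injdim_{\Rop}R$ from \cite[Lemma~A]{Zaks69}). The symmetric argument handles the converse implication.

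In fact the cleanest route avoids even invoking the $\Ext$-transfer: Proposition~\ref{Dotsenko-Gelinas-Tamaroff_2023_Proposition 2.4} says $\L$ is Gorenstein $\iff$ $\Gdim_{\L}\mathbbm{k}<\infty$, and $\L$ Gorenstein $\iff$ $\Lop$ Gorenstein $\iff$ $\Gdim_{\Lop}\mathbbm{k}<\infty$, while being Gorenstein is by definition the conjunction $\injdim_{\L}\L<\infty$ and $\injdim_{\Lop}\L<\infty$. So I would argue: if $\injdim_{\L}\L<\infty$ then $\L$ has finite self-injective dimension on one side, and I must promote this to both sides. This is exactly where eventual periodicity enters and where the argument is not purely formal — for a general algebra one-sided finite injective dimension need not imply the other (this is the Gorenstein symmetric conjecture). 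The mechanism is: $\injdim_{\L}\L<\infty$ implies $\projdim_{\Lop}D(\L)<\infty$, hence $D(\L)\in\Lop\fpd$, hence by Proposition~\ref{claim_18} applied to $\Lop$ we get $\projdim_{\Lop}D(\L)\le n$, i.e.\ $\injdim_{\L}\L\le n$. Symmetrically $\injdim_{\Lop}\L<\infty$ implies $\projdim_{\L}D(\L)<\infty$ and then $\injdim_{\Lop}\L\le n$. The bridge between the two sides is then supplied by noting that finiteness of $\injdim_{\L}\L$ alone already yields, via Proposition~\ref{Dotsenko-Gelinas-Tamaroff_2023_Proposition 2.4}, that $\Gdim_{\L}\mathbbm{k}<\infty$, so $\L$ is Gorenstein and thus $\injdim_{\Lop}\L<\infty$ as well.

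The main obstacle, and the only non-routine ingredient, is making precise that eventual periodicity rules out the Gorenstein-symmetric pathology — i.e.\ ensuring that the finitistic-dimension bound of Proposition~\ref{claim_18} combines correctly with the one-sided hypothesis. Once it is observed that $\injdim_{\L}\L<\infty$ is literally the condition $\projdim_{\Lop}D(\L)<\infty$ (because $D$ is an exact duality exchanging injectives on one side with projectives on the other), Proposition~\ref{claim_18} for $\Lop$ finishes one direction immediately, and symmetry finishes the other. I would write the proof in two short paragraphs, one for each implication, citing Proposition~\ref{claim_18} and the exactness of the duality $D$, with Proposition~\ref{Dotsenko-Gelinas-Tamaroff_2023_Proposition 2.4} invoked if one prefers to phrase the conclusion as ``$\L$ is Gorenstein''. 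I do not expect any genuinely hard calculation here; the content is entirely in correctly assembling the already-established facts.
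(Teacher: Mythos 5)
There is a genuine gap at the decisive step. Your same-side bookkeeping is fine: $\injdim_{\L}\L=\projdim_{\Lop}D(\L)$ since $D$ is an exact duality exchanging injectives and projectives, so Proposition \ref{claim_18} applied to $\Lop$ gives $\injdim_{\L}\L\le n$ whenever it is finite, and symmetrically for the other side. But this only bounds the dimension on the side you already assumed finite; it does not pass from one side to the other. The bridge you then invoke --- ``finiteness of $\injdim_{\L}\L$ alone already yields, via Proposition \ref{Dotsenko-Gelinas-Tamaroff_2023_Proposition 2.4}, that $\Gdim_{\L}\mathbbm{k}<\infty$, so $\L$ is Gorenstein'' --- is a misreading of that proposition. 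Its statement is: $\L$ is Gorenstein (i.e.\ \emph{both} $\injdim_{\L}\L<\infty$ and $\injdim_{\Lop}\L<\infty$) if and only if $\Gdim_{\L}\mathbbm{k}<\infty$. One-sided finiteness of the injective dimension is not known to imply $\Gdim_{\L}\mathbbm{k}<\infty$; establishing that implication for a general algebra would settle the Gorenstein symmetric conjecture, which is exactly the content of the statement you are trying to prove. So the argument is circular precisely at the point where, as you yourself note, eventual periodicity must enter. The alternative transfer you sketch via $\Ext_{\L}^{\bullet}(M,N)\cong\Ext_{\Le}^{\bullet}(\L,\Hom_k(M,N))$ has the same problem: that isomorphism only produces one-sided Ext groups again, and converting vanishing of $\Ext_{\L}^{i}(\mathbbm{k},\L)$ in high degrees into vanishing of $\Ext_{\Lop}^{i}(\mathbbm{k},\L)$ is once more the symmetry question, not a formal dualization.

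What is missing is the classical result the paper cites: Auslander--Reiten (Adv.\ Math.\ 1991, Proposition 6.10), which shows that when the finitistic dimensions of $\L$ and $\Lop$ are finite, finiteness of the self-injective dimension on one side forces it on the other. The paper's proof is exactly your first ingredient (Proposition \ref{claim_18}, giving $\Findim\L\le n$ and $\Findim\Lop\le n$) combined with that Auslander--Reiten bridge; the eventual periodicity is used only to supply the finitistic-dimension bound, and the genuinely nontrivial left--right passage is outsourced to the cited result. To repair your write-up, replace the appeal to Proposition \ref{Dotsenko-Gelinas-Tamaroff_2023_Proposition 2.4} by this Auslander--Reiten argument (or reprove it), since without it the two halves of your proof never connect.
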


\begin{proof}
It is a  consequence of Proposition \ref{claim_18} and \cite[Proposition 6.10]{AusRei_1991_AdvMath}.
\end{proof}
%%%%%%%%%%%%%%%%%%%%%%%%%%%%%%%%%%%%
%           statement ↑
%%%%%%%%%%%%%%%%%%%%%%%%%%%%%%%%%%%%

We say that an algebra $\L$ {\it satisfies ${(\rm AC)}$} if  the following condition is satisfied:
\begin{enumerate}
    \item[(AC)] For a finitely generated $\L$-module $M$, there exists an integer $b_{M} \geq 0$ such that if a finitely generated $\L$-module $N$ satisfies that $\Ext_{\L}^{\gg 0}(M, N) = 0$, then $\Ext_{\L}^{> b_M}(M, N) = 0$.
\end{enumerate}  
See \cite{Celikbas-Takahashi_2013,Christensen-Holm_2010} for more information on this condition and related homological problems.

%%%%%%%%%%%%%%%%%%%%%%%%%%%%%%%%%%%%
%           statement ↓
%%%%%%%%%%%%%%%%%%%%%%%%%%%%%%%%%%%%
\begin{proposition} \label{claim_28}   
An eventually periodic algebra and its opposite algebra satisfy {\rm (AC)}.
\end{proposition}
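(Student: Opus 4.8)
The plan is to reduce the statement to the property that the syzygy operator is eventually periodic and that condition (AC) is stable under this situation. Let $\L$ be an $(n,p)$-eventually periodic algebra, and let $M$ be a finitely generated $\L$-module. By Lemma \ref{claim_42}(1), we have an isomorphism of endofunctors $\Omega_{\L}^{n+p} \cong \Omega_{\L}^{n}$ on $\L\sMod$. The key observation is that, for $i > n$, the Tate-type periodicity forces $\Ext_{\L}^{i}(M, N) \cong \Ext_{\L}^{i+p}(M, N)$ for all finitely generated $N$: indeed, for $i > n$ one has $\Ext_{\L}^{i}(M,N) \cong \underline{\Hom}_{\L}(\Omega_{\L}^{i}(M), N)$, and since $i > n$ we may write $i = n + j$ with $j>0$ and use $\Omega_{\L}^{n+j}(M) \cong \Omega_{\L}^{n+j+p}(M)$ in $\L\sMod$ (applying $\Omega_\L^{j}$ to the isomorphism $\Omega_\L^{n+p}\cong\Omega_\L^n$), giving $\Ext_{\L}^{i}(M,N) \cong \Ext_{\L}^{i+p}(M,N)$.

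Granting this periodicity of Ext in high degrees, the verification of (AC) is immediate with the uniform bound $b_M := n$, independent of $M$. Suppose $N$ is a finitely generated $\L$-module with $\Ext_{\L}^{\gg 0}(M, N) = 0$, say $\Ext_{\L}^{i}(M,N) = 0$ for all $i \geq t$ for some $t$. For any $i > n$, pick a positive integer $m$ with $i + mp \geq t$; then $\Ext_{\L}^{i}(M, N) \cong \Ext_{\L}^{i + mp}(M, N) = 0$ by iterating the periodicity isomorphism. Hence $\Ext_{\L}^{> n}(M, N) = 0$, so $\L$ satisfies (AC) with $b_M = n$. The same argument applied to the $(n,p)$-eventually periodic algebra $\L^{\rm op}$ (using Lemma \ref{claim_42}(2)) shows that $\L^{\rm op}$ satisfies (AC); note that by Definition \ref{def_4} and the symmetry of $\Le$ in its two tensor factors, $\L$ being $(n,p)$-eventually periodic as a $\Le$-module is equivalent to $\L^{\rm op}$ being $(n,p)$-eventually periodic as a $(\L^{\rm op})^{\rm e}$-module, so Lemma \ref{claim_42} applies verbatim.

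I do not anticipate a serious obstacle here; the only point requiring a little care is the passage from the functorial isomorphism $\Omega_\L^{n+p}\cong\Omega_\L^n$ on the stable category to the concrete isomorphism $\Ext_\L^{i}(M,N)\cong\Ext_\L^{i+p}(M,N)$ for $i>n$, and the fact that the bound $b_M$ can be taken to be the constant $n$ (so (AC) holds in the strong, uniform form). This mirrors exactly the mechanism already used in the proofs of Propositions \ref{claim_18} and \ref{claim_39}, where eventual periodicity of the syzygy functor is exploited to control homological invariants in degrees above $n$. Thus the proof is essentially a formal consequence of Lemma \ref{claim_42}.
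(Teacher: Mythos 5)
Your overall route is the same as the paper's: use Lemma \ref{claim_42} to get $\Ext_{\L}^{i}(M,N)\cong\Ext_{\L}^{i+p}(M,N)$ for all $i>n$ and all $N\in\L\mod$, and then take the uniform bound $b_{M}:=n$ (the opposite algebra being handled by Lemma \ref{claim_42}(2)). However, the way you justify the periodicity of Ext has a genuine flaw: the identification $\Ext_{\L}^{i}(M,N)\cong\underline{\Hom}_{\L}(\Omega_{\L}^{i}(M),N)$ is not valid over a general algebra. What one always has is only a natural surjection $\Ext_{\L}^{i}(M,N)\twoheadrightarrow\underline{\Hom}_{\L}(\Omega_{\L}^{i}(M),N)$: the maps $\Omega_{\L}^{i}(M)\to N$ that extend to $P_{i-1}$ form a subgroup of those factoring through a projective, and the inclusion is strict in general. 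For instance, over the path algebra of the quiver $1\to 2$ one has $\Ext^{1}(S_{1},S_{2})\cong k$ while $\Omega(S_{1})=S_{2}$ is projective, so $\underline{\Hom}(\Omega(S_{1}),S_{2})=0$. The identification you invoke does hold over self-injective algebras (or, in degree $i$, whenever $\Ext_{\L}^{i}(M,\L)=0$), but eventually periodic algebras need not be self-injective or even Gorenstein, and nothing in your argument shows that $\Ext_{\L}^{i}(M,\L)$ vanishes for $i>n$; so, as written, the key step is unproved and the stated isomorphism with stable Hom is false in general.

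The repair is short and is exactly the mechanism the paper uses in Lemma \ref{claim_1}: from $\Omega_{\L}^{n+p}(M)\cong\Omega_{\L}^{n}(M)$ in $\L\sMod$ (Lemma \ref{claim_42}), \cite[Proposition 1.44]{AusBri69} yields projective modules $P,Q$ with $\Omega_{\L}^{n+p}(M)\oplus P\cong\Omega_{\L}^{n}(M)\oplus Q$ in $\L\Mod$, and then dimension shifting gives, for every $i>n$,
\begin{align*}
\Ext_{\L}^{i}(M,N)\cong\Ext_{\L}^{i-n}\!\left(\Omega_{\L}^{n}(M),N\right)\cong\Ext_{\L}^{i-n}\!\left(\Omega_{\L}^{n+p}(M),N\right)\cong\Ext_{\L}^{i+p}(M,N),
\end{align*}
since $\Ext_{\L}^{j}(P,N)=0=\Ext_{\L}^{j}(Q,N)$ for $j\geq 1$. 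With this substitution your argument, including the uniform choice $b_{M}=n$ and the treatment of $\Lop$, coincides with the paper's proof; the paper additionally separates off modules of finite projective dimension using $\findim\L\leq n$ (Proposition \ref{claim_18}), but as you observe that case is also absorbed by the periodicity isomorphism.
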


\begin{proof} 
We only prove that an $(n, p)$-eventually periodic algebra $\L$ satisfies {\rm (AC)}; the proof for the opposite algebra $\Lop$ is similar.
Since  $\findim \L < \infty$ by Proposition \ref{claim_18}, it suffices to consider finitely generated $\L$-modules $M$ with $\projdim_{\L}M = \infty$.
It follows from Lemma \ref{claim_42} that $\Ext_{\L}^{i}(M, N) \cong \Ext_{\L}^{i+p}(M, N)$ for all $i > n$ and all $N \in \L\mod$. 
Thus taking $b_M := n$ will complete the proof.
\end{proof}
%%%%%%%%%%%%%%%%%%%%%%%%%%%%%%%%%%%%
%           statement ↑
%%%%%%%%%%%%%%%%%%%%%%%%%%%%%%%%%%%%

Thanks to Christensen and Holm \cite[Theorem A]{Christensen-Holm_2010}, we know that the following condition holds for an algebra $\L$ satisfying (AC):
\begin{enumerate}
    \item[(ARC)] Let $M$ be a finitely generated $\L$-module. If $\Ext_{\L}^{i}(M, M) $ $=$ $0$ $= \Ext_{\L}^{i}(M,  \L)$ for all $i >0$, then  $M$ is projective.
\end{enumerate}  
On the other hand, we know from Subsection \ref{Preliminaries_GorensteinRings} that if $\L$ is a Gorenstein algebra, then the bimodule Gorenstein dimension of $\L$ is finite. 
The following main result of this section shows that the converse holds for the class of eventually periodic algebras.
The key to the proof is the condition (ARC).

%%%%%%%%%%%%%%%%%%%%%%%%%%%%%%%%%%%%
%           statement ↓
%%%%%%%%%%%%%%%%%%%%%%%%%%%%%%%%%%%%
\begin{theorem} \label{claim_38} 
Let $\L$ be an eventually periodic algebra.
Then $\L$ is a Gorenstein algebra if and only if the Gorenstein dimension of the regular $\L$-bimodule $\L$ is finite.
In this case, we have $\Gdim_{\Le} \L= \injdim_{\L}\L$.
\end{theorem}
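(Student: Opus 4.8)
The plan is to prove the two directions separately, with the non-trivial implication being that finite bimodule Gorenstein dimension forces $\L$ to be Gorenstein. The forward direction is immediate: if $\L$ is a Gorenstein algebra, then Proposition \ref{claim_44} together with the formula (\ref{eq_11}) gives $\Gdim_{\Le}\L = \Gpd_{\Le}\L = \injdim_{\L}\L < \infty$, which also establishes the final formula in that case.

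For the converse, suppose $\L$ is eventually periodic and $\Gdim_{\Le}\L = \Gpd_{\Le}\L =: d < \infty$. The goal is to show $\L$ is Gorenstein, and by Proposition \ref{Dotsenko-Gelinas-Tamaroff_2023_Proposition 2.4} it suffices to show $\mathbbm{k} = \L/J(\L)$ has finite Gorenstein dimension as a $\L$-module. The strategy is to produce, from the eventually periodic structure, a suitable finitely generated $\L$-module to which the condition (ARC) applies. Concretely, $\L$ is $(n,p)$-eventually periodic, so $\Omega_{\Le}^{n}(\L)$ is periodic over $\Le$; since $\Gpd_{\Le}\L = d < \infty$, Theorem \ref{claim_6} (or Corollary \ref{claim_49}) tells us $\Omega_{\Le}^{n}(\L)$ is Gorenstein projective, and in fact $p$-strongly Gorenstein projective by Corollary \ref{claim_7}. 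First I would use this to get a totally acyclic complex of projective $\Le$-modules realizing the relevant high syzygies of $\L$; tensoring over $\L$ with $\mathbbm{k}$ (or with any $M \in \L\mod$) and using $\Omega_{\L}^{i} \cong \Omega_{\Le}^{i}(\L)\otimes_{\L}-$ from Lemma \ref{claim_42}, one controls the high syzygies of every $\L$-module. The point is then to show that for a high enough syzygy $X := \Omega_{\L}^{m}(\mathbbm{k})$ one has $\Ext_{\L}^{i}(X,X) = 0 = \Ext_{\L}^{i}(X,\L)$ for all $i>0$: the periodicity of the bimodule syzygy makes $\Ext_{\L}^{i}(X,-)$ eventually $p$-periodic (Proposition \ref{claim_28}), and combined with the Gorenstein-projective (hence $^{\perp}(\L\Proj)$) behaviour inherited from $\Omega_{\Le}^{n}(\L)$, these Ext-groups vanish. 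By (ARC) — which holds for $\L$ by Proposition \ref{claim_28} and \cite[Theorem A]{Christensen-Holm_2010} — such an $X$ is projective, so $\mathbbm{k}$ has finite projective dimension up to a Gorenstein projective syzygy, i.e.\ $\Gdim_{\L}\mathbbm{k} < \infty$. Then Proposition \ref{Dotsenko-Gelinas-Tamaroff_2023_Proposition 2.4} yields that $\L$ is Gorenstein, and Proposition \ref{claim_44} gives $\Gdim_{\Le}\L = \injdim_{\L}\L$.

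The main obstacle I anticipate is arranging the Ext-vanishing hypothesis of (ARC) for a single module $X$: one needs $\Ext_{\L}^{i}(X,X)=0$ for \emph{all} $i>0$ (not merely for large $i$), and similarly $\Ext_{\L}^{i}(X,\L)=0$. The first requires combining two facts — that a high syzygy $X$ is a summand-free piece of $\Omega_{\Le}^{n}(\L)\otimes_{\L}\mathbbm{k}$, which lies in $^{\perp}(\L\Proj)$ by the Gorenstein-projective property, and that $\Ext_{\L}^{i}(X,X)$ for $0<i\le$ (period range) can be pushed up by $p$ indefinitely via Lemma \ref{claim_42}, hence agrees with $\Ext^{i}_{\L}$ in arbitrarily large degrees where it vanishes. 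The bookkeeping of which syzygy index to take, and checking that tensoring the totally acyclic $\Le$-complex with $\mathbbm{k}$ stays acyclic and totally acyclic (using that $\Omega_{\Le}^{n}(\L)$ is $p$-strongly Gorenstein projective, so the defining exact sequence stays $\Hom_{\Le}(-,\Le\text{-proj})$-exact and tensoring with $\mathbbm{k}$ is harmless on the projective terms), is where the real care is needed; everything else is a direct appeal to the cited propositions.
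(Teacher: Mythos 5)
Your forward direction and your overall plan (verify the two Ext-vanishing hypotheses of (ARC) for a single finitely generated module and let \cite[Theorem A]{Christensen-Holm_2010} via Proposition \ref{claim_28} force projectivity) are the same as the paper's, but the module you apply (ARC) to is the wrong one, and the step where you verify its hypotheses has a genuine gap that cannot be repaired. You take $X=\Omega_{\L}^{m}(\mathbbm{k})$ and claim (a) $X\in{}^{\perp}(\L\Proj)$ because it is \lq\lq inherited\rq\rq\ from the Gorenstein projectivity of $\Omega_{\Le}^{n}(\L)$, and (b) $\Ext_{\L}^{i}(X,X)=0$ for all $i>0$ by pushing the degree up with the $p$-periodicity. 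Neither holds. For (a): tensoring the totally acyclic complex of projective $\Le$-modules with $\mathbbm{k}$ over $\L$ does give an acyclic complex of projective $\L$-modules (the bimodule syzygies are projective on one side), but $\Hom_{\Le}(-,\Le)$-exactness does not transfer to $\Hom_{\L}(-,\L)$-exactness, so total acyclicity is not inherited; in fact $\Ext_{\L}^{>0}(\Omega_{\L}^{m}(\mathbbm{k}),\L)=0$ is equivalent to $\injdim_{\L}\L\leq m$, i.e.\ it is essentially the conclusion you are trying to prove, so assuming it is circular. For (b): pushing up by $p$ only identifies $\Ext_{\L}^{i}(X,X)$ with $\Ext_{\L}^{i+m}(X,X)$ in arbitrarily high degrees, and there is no reason these vanish. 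A concrete reductio: if your two vanishings held, (ARC) would make $\Omega_{\L}^{m}(\mathbbm{k})$ projective, i.e.\ $\gldim\L<\infty$; but $\L=k[x]/(x^{2})$ is a periodic algebra with $\Gdim_{\Le}\L=0$, every syzygy of $\mathbbm{k}$ is $\mathbbm{k}$, and $\Ext_{\L}^{i}(\mathbbm{k},\mathbbm{k})\neq0$ for all $i$. So your route could only ever prove the theorem for algebras of finite global dimension.

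The paper's fix is the choice $X=\Omega_{\L}^{n}(D(\L))$, where $D=\Hom_k(-,k)$. The hypothesis $\Gdim_{\Le}\L=r<\infty$ enters through the isomorphism (\ref{eq_6}), which gives $\Ext_{\L}^{i}(D(\L),\L)=0$ for $i>r$, hence $\Ext_{\L}^{>0}(X,\L)=0$ since $r\leq n$ (Corollary \ref{claim_49}); and for the self-extensions one uses Lemma \ref{claim_42} to raise the degree past $n$ and then dimension-shifts the \emph{second} argument down from $\Omega_{\L}^{n}(D(\L))$ to $D(\L)$, where everything dies because $D(\L)$ is injective --- this is exactly the vanishing that is unavailable with $\mathbbm{k}$ in the second slot. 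Then (ARC) gives that $\Omega_{\L}^{n}(D(\L))$ is projective, so $\injdim_{\Lop}\L=\projdim_{\L}D(\L)\leq n<\infty$, and Proposition \ref{claim_19} (rather than Proposition \ref{Dotsenko-Gelinas-Tamaroff_2023_Proposition 2.4}) upgrades this one-sided finiteness to Gorensteinness; the equality $\Gdim_{\Le}\L=\injdim_{\L}\L$ then follows from Proposition \ref{claim_44}, as you say. In short: replace $\mathbbm{k}$ by $D(\L)$ and route the hypothesis $\Gdim_{\Le}\L<\infty$ through (\ref{eq_6}); without that, the Ext-vanishing you need is either false or presupposes the theorem.
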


\begin{proof}  It is sufficient to show the\ \lq\lq if\ \!\rq\rq\ part.
Suppose that $\L$ is $(n, p)$-eventually periodic with $\Gdim_{\Le}\L = r < \infty$. 
Then  the isomorphism (\ref{eq_6}) implies that $\Ext_\L^{i}(D(\L), \L)  = 0$ for all $i >r$, and we know from Corollary \ref{claim_49} that $r\leq n$.
Moreover, by Lemma \ref{claim_42}, there exists an isomorphism 
\begin{align*}
    \Ext_{\L}^{i}\!\left(\Omega_{\L}^{n}(D(\L)), N\right) \cong \Ext_{\L}^{i+p}\!\left(\Omega_{\L}^{n}(D(\L)), N\right)
\end{align*}
for all $i >  0$ and all $N \in \L\mod$. 
As a result, letting  $m$ be an integer divided by $p$ with $m > n$, we have the following isomorphisms for all $i>0$:
\begin{align*}
    \Ext_\L^{i}\!\left( \Omega_{\L}^{n}(D(\L)),  \Omega_{\L}^{n}(D(\L)) \right) 
    &\ \cong\  \Ext_\L^{i+p}\left( \Omega_{\L}^{n}(D(\L)),  \Omega_{\L}^{n}(D(\L)) \right) \\
    &\ \ \,  \vdots \\
    &\ \cong\   \Ext_\L^{i+m}\!\left( \Omega_{\L}^{n}(D(\L)),  \Omega_{\L}^{n}(D(\L)) \right) \\
    &\ \cong\   \Ext_\L^{i+m-n}\!\left( \Omega_{\L}^{n}(D(\L)), D(\L) \right) \\
    &\ = 0.
\end{align*}
Hence the fact that $\L$ satisfies (ARC) implies that $\injdim_{\L^{\rm op}}\L$ $=$ $\projdim_{\L}D(\L)$ $\leq n$, so that
$\L$ is Gorenstein by Proposition \ref{claim_19}.
The last statement follows from Proposition \ref{claim_44}.
\end{proof}
%%%%%%%%%%%%%%%%%%%%%%%%%%%%%%%%%%%%
%           statement ↑
%%%%%%%%%%%%%%%%%%%%%%%%%%%%%%%%%%%%

Now, we focus on Gorenstein projective modules over an eventually periodic algebra. 
Recall from \cite{Saito_2021} that a triangulated category $\mathcal{T}$ with shift functor $\Sigma$ is {\it periodic} if  $\Sigma^{m} \cong \mathrm{Id}_{\mathcal{T}}$ for some $m>0$. 
The smallest such $m$ is called the {\it period} of $\mathcal{T}$. 
We then have the following observation.

%%%%%%%%%%%%%%%%%%%%%%%%%%%%%%%%%%%%
%           statement ↓
%%%%%%%%%%%%%%%%%%%%%%%%%%%%%%%%%%%%
\begin{proposition} \label{claim_41}   
Let  $\L$ be an $(n,p)$-eventually periodic algebra.
Then the following statements hold.
\begin{enumerate}
  \setlength{\itemsep}{1.5mm}  
    \item $\L\sGProj$ and $\L\sGproj$ are periodic of period dividing $p$.
    \item $\L\GProj = p\mbox{-}\L\SGProj$ and   $\L\Gproj = p\mbox{-}\L\SGproj$.
\end{enumerate}
\end{proposition}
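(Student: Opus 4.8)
The plan is to deduce both statements from the key identity $\Omega_{\L}^{n+p} \cong \Omega_{\L}^{n}$ on $\L\sMod$ (Lemma \ref{claim_42}) together with the fact, recalled in Section \ref{Preliminaries}, that on Gorenstein projective modules the syzygy functor $\Omega_{\L}$ induces an \emph{equivalence}, namely $\Sigma^{-1}$ on $\L\sGProj$ (and $\L\sGproj$). The point is that periodicity of $\Omega_{\L}$ \emph{up to projective summands}, which is all one gets from eventual periodicity, becomes genuine periodicity once we restrict to the triangulated quotient $\L\sGProj$, where projectives are zero.

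For (1), I would argue as follows. Let $M$ be Gorenstein projective. Since each $\Omega_{\L}^{i}(M)$ is Gorenstein projective for $i \ge 0$, we have $\Sigma^{-i}M \cong \Omega_{\L}^{i}(M)$ in $\L\sGProj$ for all $i \ge 0$ by \cite[Lemma 2.3.4]{Veliche_2006}. Applying this with $i = n$ and with $i = n+p$ and using Lemma \ref{claim_42}(1), we get
\begin{align*}
    \Sigma^{-(n+p)}M \cong \Omega_{\L}^{n+p}(M) \cong \Omega_{\L}^{n}(M) \cong \Sigma^{-n}M
\end{align*}
in $\L\sGProj$, and this isomorphism is natural in $M$ because it is built from the natural isomorphism $\Omega_{\L}^{n+p} \cong \Omega_{\L}^{n}$ of functors on $\L\sMod$ restricted to the full subcategory $\L\GProj$ (composed with the natural identifications $\Sigma^{-i} \cong \Omega_{\L}^{i}$, which are natural on $\L\sGProj$). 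Since $\Sigma$ is an autoequivalence, applying $\Sigma^{n+p}$ yields a natural isomorphism $\mathrm{Id} \cong \Sigma^{p}$ on $\L\sGProj$, i.e.~$\L\sGProj$ is periodic of period dividing $p$. The same argument works verbatim for $\L\sGproj$, using the finitely generated version of \cite[Lemma 2.3.4]{Veliche_2006}. The main subtlety to handle carefully is the naturality claim: one must check that the abstract isomorphisms $\Sigma^{-i}M \cong \Omega_{\L}^{i}(M)$ from \cite{Veliche_2006} can be chosen functorially in $M \in \L\sGProj$ so that they splice with $\Omega_{\L}^{n+p} \cong \Omega_{\L}^{n}$; alternatively, one can cite that $\Sigma^{-1}$ and $\Omega_{\L}$ literally agree as functors on $\L\sGProj$ (cf.~the discussion preceding Lemma \ref{claim_1} and \cite[Lemma 2.3.4]{Veliche_2006}), which makes the naturality immediate.

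For (2), recall the chain of inclusions $\L\Proj \subseteq 1\mbox{-}\L\SGProj \subseteq p\mbox{-}\L\SGProj \subseteq \L\GProj$ from \cite[Proposition 2.5]{Bennis-Mahdou_2009}, so only $\L\GProj \subseteq p\mbox{-}\L\SGProj$ needs proof. Let $M \in \L\GProj$. By part (1), $\Sigma^{p}M \cong M$ in $\L\sGProj$, equivalently $\Omega_{\L}^{p}(M) \cong M$ in $\L\sMod$ (using $\Sigma^{-1} = \Omega_{\L}$ on $\L\sGProj$). By the characterisation of $n$-strongly Gorenstein projective modules recalled in Section \ref{Preliminaries}, $M$ is $p$-strongly Gorenstein projective precisely when $\Omega_{p}(P_\bullet) \cong M$ in $\L\sMod$ for a projective resolution $P_\bullet \to M$ --- which is exactly $\Omega_{\L}^{p}(M) \cong M$ in $\L\sMod$ --- \emph{and} $\Ext_{\L}^{i}(M, P) = 0$ for $1 \le i \le p$ and all $P \in \L\Proj$; the latter holds automatically since $M$ is Gorenstein projective, hence lies in ${}^{\perp}(\L\Proj)$. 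Therefore $M \in p\mbox{-}\L\SGProj$, proving $\L\GProj = p\mbox{-}\L\SGProj$. The finitely generated statement $\L\Gproj = p\mbox{-}\L\SGproj$ follows identically from part (1) for $\L\sGproj$ and the finitely generated analogues of these facts. I expect the only real work is the bookkeeping in part (1); part (2) is then a short formal consequence.
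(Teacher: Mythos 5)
Your proof is correct and follows essentially the same route as the paper: both rest on Lemma \ref{claim_42} together with the identification $\Sigma^{-1} = \Omega_{\L}$ on $\L\sGProj$ (resp.\ $\L\sGproj$), giving $\Sigma^{-n-p} \cong \Sigma^{-n}$ and hence $\Sigma^{p} \cong \mathrm{Id}$. The paper simply states that (2) is immediate from (1); your verification via the characterisation of $p$-strongly Gorenstein projective modules, the inclusion $\L\GProj \subseteq {}^{\perp}(\L\Proj)$, and the Bennis--Mahdou inclusions is exactly the argument left implicit there.
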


\begin{proof}
Let $\Sigma$ denote the shift functor on $\L\sGProj$. 
Since $\Omega_\L^{n+p} \cong \Omega_\L^{n}$ as endofunctors on $\L\sMod$ by Lemma \ref{claim_42}, the fact that $\Sigma^{-1} = \Omega_\L$ implies that $\Sigma^{-n-p} \cong\Sigma^{-n}$ and hence $\Sigma^{p} \cong \mathrm{Id}$.
Since $\Sigma$ restricts to the shift functor on $\L\sGproj$, we conclude that $\L\sGProj$ and $\L\sGproj$ are both periodic.
Now, (2) immediately follows from (1).
\end{proof}
%%%%%%%%%%%%%%%%%%%%%%%%%%%%%%%%%%%%
%           statement ↑
%%%%%%%%%%%%%%%%%%%%%%%%%%%%%%%%%%%%

%%%%%%%%%%%%%%%%%%%%%%%%%%%%%%%%%%%%
%           remark ↓
%%%%%%%%%%%%%%%%%%%%%%%%%%%%%%%%%%%%
\begin{remark}
{\rm
The same statements as Proposition \ref{claim_41}   hold for $\Lop\GProj$ and $\Lop\Gproj$. We leave it to the reader to state and show the analogous result.
}\end{remark}
%%%%%%%%%%%%%%%%%%%%%%%%%%%%%%%%%%%%
%           remark ↑
%%%%%%%%%%%%%%%%%%%%%%%%%%%%%%%%%%%%

We end this section by showing that eventually periodic algebras are both left and right weakly Gorenstein. 
Although this is a consequence of Proposition \ref{claim_28} and \cite[Theorem C]{Christensen-Holm_2010}, we give another proof. 
Recall that an algebra $\L$ is  {\it left weakly Gorenstein} if  $\L\Gproj ={}^{\perp}\L$, where ${}^{\perp}\L$ is the full subcategory of $\L\mod$ given by 
\[{}^{\perp}\L := \left\{ M \in \L\mod \mid  \Ext_{\L}^{i}(M, \L) = 0 \mbox{ for all } i > 0 \right \}. \]
Also, the algebra $\L$ is called {\it right weakly Gorenstein} if $\Lop$ is left weakly Gorenstein. 
See \cite{Marczinzik_2019,Ringel-Zhang_2020} for more details.
Also, thanks to Chen \cite[page 16]{X-WChen_2017}, we have the following equality for any algebra $\L$:
\begin{align} \label{eq_5}
    {}^{\perp}(\L\Proj) = \left\{ M \in \L\Mod \mid  \Ext_{\L}^{i}(M, \L) = 0 \mbox{ for all } i > 0 \right\}.
\end{align}

%%%%%%%%%%%%%%%%%%%%%%%%%%%%%%%%%%%%
%           statement ↓
%%%%%%%%%%%%%%%%%%%%%%%%%%%%%%%%%%%%
\begin{proposition} \label{claim_40}  
Let $\L$ be an eventually periodic algebra.
Then the following statements hold.
\begin{enumerate}
  \setlength{\itemsep}{1mm}  
    \item  $\L\GProj = {}^{\perp}(\L\Proj)$ and $\Lop\GProj = {}^{\perp}(\Lop\Proj)$.
    \item  $\L$ is both left and right weakly Gorenstein. 
\end{enumerate}  
\end{proposition}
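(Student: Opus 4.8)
The plan is to prove part (1) first, since part (2) is an immediate consequence: once we know $\L\GProj = {}^{\perp}(\L\Proj)$, intersecting with $\L\mod$ and using the equality \eqref{eq_5} gives $\L\Gproj = {}^{\perp}\L$, which is exactly left weak Gorensteinness; applying the same reasoning to $\Lop$ (which is again eventually periodic, being $(n,p)$-eventually periodic as a $(\Lop)^{\mathrm e}$-module via the canonical identification $\Le \cong (\Lop)^{\mathrm e}$) yields right weak Gorensteinness. So the entire content is in part (1), and by symmetry it suffices to prove $\L\GProj = {}^{\perp}(\L\Proj)$.

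The inclusion $\L\GProj \subseteq {}^{\perp}(\L\Proj)$ is already recorded in Section~\ref{Preliminaries} (indeed $R\GProj \subseteq {}^{\perp}(R\Proj) = {}^{\perp}(R\Fpd)$ for any ring), so the real task is the reverse inclusion: every $M \in {}^{\perp}(\L\Proj)$ is Gorenstein projective. First I would fix an $(n,p)$-eventually periodic structure on $\L$ and take $M \in {}^{\perp}(\L\Proj)$, so $\Ext_\L^i(M,\L) = 0$ for all $i>0$. The key input is Lemma~\ref{claim_42}(1): the syzygy functor on $\L\sMod$ satisfies $\Omega_\L^{n+p} \cong \Omega_\L^n$. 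Applied to $M$ this gives $\Omega_\L^{n+p}(M) \cong \Omega_\L^n(M)$ in $\L\sMod$, hence (by \cite[Proposition 1.44]{AusBri69}, as in Lemma~\ref{claim_1}) $\Omega_\L^{n+p}(M) \oplus P \cong \Omega_\L^n(M) \oplus Q$ in $\L\Mod$ for suitable projectives $P,Q$. Now splice the resulting periodic segment $0 \to \Omega_\L^{n+p}(M) \to P_{n+p-1} \to \cdots \to P_n \to \Omega_\L^n(M) \to 0$ of a projective resolution of $M$ repeatedly, exactly as in the proof of Theorem~\ref{claim_6}, to build an acyclic complex $T_\bullet$ of projectives with $\Omega_0(T_\bullet) \cong \Omega_\L^n(M)$ in $\L\Mod$, and indeed one whose "right half" (degrees $\le 0$) coincides with the fixed projective resolution of $M$ placed in degrees $0, -1, -2, \dots$; so $\Omega_{-i}(T_\bullet) \cong \Omega_\L^{\,n+i}(M)$ and $\Omega_i(T_\bullet) \cong \Omega_\L^{\,n-i}(M)$ for $0 \le i \le n$, with $\Omega_n(T_\bullet)$ one syzygy away from $M$ itself. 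It remains to verify that $T_\bullet$ is totally acyclic, i.e.\ that $\Hom_\L(T_\bullet, \L)$ is acyclic.

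This last verification is the step I expect to be the main obstacle, and it is where the hypothesis $M \in {}^{\perp}(\L\Proj)$ enters. The idea is: the homology of $\Hom_\L(T_\bullet, \L)$ in degree $i$ is governed by $\Ext_\L^{*}(\Omega_j(T_\bullet), \L)$ for appropriate $j$, and since every object $\Omega_j(T_\bullet)$ appearing is a syzygy of a shift of $M$ (up to projective summands), the vanishing $\Ext_\L^{>0}(M,\L) = 0$ propagates: $\Ext_\L^{>0}(\Omega_\L^t(M), \L) = 0$ for all $t \ge 0$ as well (syzygies of objects in ${}^{\perp}(\L\Proj)$ stay in ${}^{\perp}(\L\Proj)$), and one exploits the $p$-periodicity $\Omega_\L^{\,n+p}(M) \cong \Omega_\L^n(M)$ to control the homology of $\Hom_\L(T_\bullet,\L)$ in the "negative" degrees too — here $\Ext_\L^{i+p}(-,\L) \cong \Ext_\L^i(-,\L)$ from Lemma~\ref{claim_42} lets us shift any cohomological degree into the range $(0, \infty)$ where vanishing is already known. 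Since $\L$ is the ground-ring projective generator, $\Hom_\L(T_\bullet, Q)$ is then acyclic for every $Q \in \L\Proj$, so $T_\bullet$ is totally acyclic; consequently $\Omega_\L^n(M)$ is Gorenstein projective, and then $M$ itself is Gorenstein projective because $\L\GProj$ is closed under "cosyzygies" inside acyclic complexes of projectives (equivalently, $\Omega_n(T_\bullet) \cong M$ up to a projective summand realizes $M$ as a term of a totally acyclic complex). Alternatively, and more cleanly, one can sidestep the explicit splicing by invoking Corollary~\ref{claim_49}: $M \in {}^{\perp}(\L\Proj) \cap \L\mod$ has $\Gpd_\L M < \infty$ iff it equals $0$ by \eqref{eq_2}, but combined with $\Pdim_\L M \le n+1 < \infty$ from Lemma~\ref{claim_42} and Theorem~\ref{claim_6} one gets $\Gpd_\L M = 0$ forced once finiteness is in hand — however, establishing $\Gpd_\L M < \infty$ a priori is itself nontrivial, so I would favor the direct totally-acyclic-complex construction above for the "Mod" statement and deduce the "mod" statement by restriction.
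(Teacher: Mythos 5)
Your overall strategy — use Lemma \ref{claim_42} to get $\Omega_{\L}^{n+p}(M)\cong\Omega_{\L}^{n}(M)$ in $\L\sMod$ for $M\in{}^{\perp}(\L\Proj)$, then exploit $\Ext_{\L}^{>0}(M,\L\Proj)=0$ to show a high syzygy of $M$ is Gorenstein projective — is the same as the paper's, which packages it without any explicit splicing: $\Omega_{\L}^{n}(M)$ is $p$-strongly Gorenstein projective because of the stable isomorphism together with $\Ext_{\L}^{i}(\Omega_{\L}^{n}(M),\L)\cong\Ext_{\L}^{i+n}(M,\L)=0$ for $i>0$. But your favored route has a genuine gap at the very last step. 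The spliced complex $T_\bullet$ only has the syzygies $\Omega_{\L}^{j}(M)$ with $j\geq n$ (up to projective summands) as cokernels; $M$ itself is \emph{not} one of them. Your proposed placement of the projective resolution of $M$ in degrees $\leq 0$ cannot work (the differentials of a resolution go downward in degree, so that half of the complex would have to consist of an exact projective \emph{coresolution} of $M$, not its resolution), and the principle you invoke — that $\L\GProj$ is \lq\lq closed under cosyzygies inside acyclic complexes of projectives\rq\rq\ so that $\Omega_{n}(T_\bullet)\cong M$ up to projectives — is exactly the assertion that $M$ admits an exact projective coresolution remaining exact under $\Hom_{\L}(-,\L\Proj)$, i.e.\ that $M$ is Gorenstein projective. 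That is what you are trying to prove, so this step is circular. (A smaller issue: splicing requires a genuine module isomorphism, so one should pass to $\Omega_{\L}^{n+1}(M)$ via Lemma \ref{claim_1}, or argue up to projective summands.)

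The repair is the alternative you mention and then set aside, and it is precisely how the paper concludes: once $\Omega_{\L}^{n}(M)$ (or $\Omega_{\L}^{n+1}(M)$) is known to be Gorenstein projective, you have $\Gpd_{\L}M\leq n+1<\infty$ — so your worry that \lq\lq establishing $\Gpd_{\L}M<\infty$ a priori is nontrivial\rq\rq\ is moot, since your own totally acyclic complex is exactly what establishes it — and then Holm's formula (\ref{eq_2}) together with $\Ext_{\L}^{i}(M,Q)=0$ for all $i>0$ and $Q\in\L\Proj$ forces $\Gpd_{\L}M=0$, i.e.\ $M\in\L\GProj$. (There is no need to invoke Corollary \ref{claim_49}, which only applies to finitely generated modules; formula (\ref{eq_2}) works in $\L\Mod$.) With that finishing step substituted for the \lq\lq cosyzygy\rq\rq\ claim, your argument becomes correct and essentially coincides with the paper's proof; the deduction of (2) from (1) via ${}^{\perp}(\L\Proj)\cap\L\mod={}^{\perp}\L$ and $\L\Gproj=\L\GProj\cap\L\mod$ matches the paper as well.
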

\begin{proof}
Assume that $\L$ is $(n,p)$-eventually periodic. We only prove that $\L\GProj = {}^{\perp}(\L\Proj)$ and that $\L$ is left weakly Gorenstein; the proof for the others is similar.
For the former, 
it suffices to show the inclusion $(\supseteq)$.
For any $M \in {}^{\perp}(\L\Proj)$, its $n$-th syzygy $\Omega_{\L}^{n}(M)$ is $p$-strongly Gorenstein projective because $\Omega_{\L}^{n+p}(M) \cong \Omega_{\L}^{n}(M)$ in $\L\sMod$ by Lemma \ref{claim_42}, and
\begin{align*}
    \Ext_\L^{i}\!\left(\Omega_{\L}^{n}(M), \L\right) \cong \Ext_\L^{i+n}(M, \L) =0
\end{align*}
for all $i>0$.
This implies that $\Gpd_\L M \leq n < \infty$.
But, we get $\Gpd_\L M = 0$ because $M$ is in ${}^{\perp}(\L\Proj)$.
We have thus proved that $\L\GProj = {}^{\perp}(\L\Proj)$ as claimed.
On the other hand, the latter follows from the following equality
\begin{align*}
    \L\Gproj  = \L\GProj \cap \L\mod = {}^{\perp}(\L\Proj) \cap \L\mod = {}^{\perp}\L,
\end{align*}
where the last equality is obtained from the formula (\ref{eq_5}).
\end{proof}
%%%%%%%%%%%%%%%%%%%%%%%%%%%%%%%%%%%%
%           statement ↑
%%%%%%%%%%%%%%%%%%%%%%%%%%%%%%%%%%%%

%%%%%%%%%%%%%%%%%%%%%%%%%%%%%%%%%%%%
%           Section ↑
%%%%%%%%%%%%%%%%%%%%%%%%%%%%%%%%%%%%

%\backmatter

\section*{Acknowledgments} 
I would like to thank Takahiro Honma and Ryotaro Koshio for helpful conversations.
I am also grateful to Ryo Takahashi. I learned the observation, presented in Introduction, regarding eventually periodic modules over commutative Noetherian local rings from him.
Finally, I would like to thank the referee for kindly pointing out errors in my English and providing suggestions, which improved this paper.

%%===========================================================================================%%
%% If you are submitting to one of the Nature Portfolio journals, using the eJP submission   %%
%% system, please include the references within the manuscript file itself. You may do this  %%
%% by copying the reference list from your .bbl file, paste it into the main manuscript .tex %%
%% file, and delete the associated \verb+\bibliography+ commands.                            %%
%%===========================================================================================%%

%\bibliographystyle{alpha}
\bibliographystyle{plain}

\bibliography{ref}

\begin{thebibliography}{10}

\bibitem{Auslander-Bridger_1969}
M.~Auslander and M.~Bridger.
\newblock {\em Stable module theory}.
\newblock Memoirs of the American Mathematical Society, No. 94. American
  Mathematical Society, Providence, R.I., 1969.

\bibitem{AusRei_1991_AdvMath}
M.~Auslander and I.~Reiten.
\newblock Applications of contravariantly finite subcategories.
\newblock {\em Adv. Math.}, 86(1):111--152, 1991.

\bibitem{Avramov_1989_proceedings}
L.~L. Avramov.
\newblock Homological asymptotics of modules over local rings.
\newblock In {\em Commutative algebra ({B}erkeley, {CA}, 1987)}, volume~15 of
  {\em Math. Sci. Res. Inst. Publ.}, pages 33--62. Springer, New York, 1989.

\bibitem{Avramov_1989}
L.~L. Avramov.
\newblock Modules of finite virtual projective dimension.
\newblock {\em Invent. Math.}, 96(1):71--101, 1989.

\bibitem{Avramov_1998}
L.~L. Avramov.
\newblock {\em Infinite Free Resolutions}, pages 1--118.
\newblock Birkh{\"a}user Basel, Basel, 1998.

\bibitem{Avramov-Martsinkovsky_2002}
L.~L. Avramov and A.~Martsinkovsky.
\newblock Absolute, relative, and {T}ate cohomology of modules of finite
  {G}orenstein dimension.
\newblock {\em Proc. London Math. Soc. $(3)$}, 85(2):393--440, 2002.

\bibitem{Beligiannis_2005}
A.~Beligiannis.
\newblock Cohen-{M}acaulay modules, (co)torsion pairs and virtually
  {G}orenstein algebras.
\newblock {\em J. Algebra}, 288(1):137--211, 2005.

\bibitem{Beligiannis_2011}
A.~Beligiannis.
\newblock On algebras of finite {C}ohen–{M}acaulay type.
\newblock {\em Adv. Math.}, 226(2):1973--2019, 2011.

\bibitem{Bennis-Mahdou_2009}
D.~Bennis and N.~Mahdou.
\newblock A generalization of strongly {G}orenstein projective modules.
\newblock {\em J. Algebra Appl.}, 8(2):219--227, 2009.

\bibitem{benson_iyengar_krause_pevtsova_2020}
D.~Benson, S.~B. Iyengar, H.~Krause, and J.~Pevtsova.
\newblock Local duality for the singularity category of a finite dimensional
  {G}orenstein algebra.
\newblock {\em Nagoya Math. J.}, 244:1--24, 2021.

\bibitem{Bergh_2006}
P.~A. Bergh.
\newblock Complexity and periodicity.
\newblock {\em Colloq. Math.}, 104(2):169--191, 2006.

\bibitem{Buchweitz_1986}
R.-{O}. Buchweitz.
\newblock Maximal {C}ohen-{M}acaulay modules and {T}ate-cohomology over
  {G}orenstein rings.
\newblock unpublished manuscript, 1986. available at
  http://hdl.handle.net/1807/16682.

\bibitem{Celikbas-Takahashi_2013}
O.~Celikbas and R.~Takahashi.
\newblock Auslander-{R}eiten conjecture and {A}uslander-{R}eiten duality.
\newblock {\em J. Algebra}, 382:100--114, 2013.

\bibitem{X-WChen_2017}
X.-W. Chen.
\newblock Gorenstein {H}omological {A}lgebra of {A}rtin {A}lgebras.
\newblock (2017). arXiv:1712.04587.

\bibitem{X-WChen_2012}
X.-W. Chen.
\newblock Algebras with radical square zero are either self-injective or
  {CM}-free.
\newblock {\em Proc. Amer. Math. Soc.}, 140(1):93--98, 2012.

\bibitem{Christensen-Holm_2010}
L.~W. Christensen and H.~Holm.
\newblock Algebras that satisfy {A}uslander's condition on vanishing of
  cohomology.
\newblock {\em Math. Z.}, 265(1):21--40, 2010.

\bibitem{Croll_2013}
A.~Croll.
\newblock Periodic modules over {G}orenstein local rings.
\newblock {\em J. Algebra}, 395:47--62, 2013.

\bibitem{Dotsenko-Gelinas-Tamaroff_2023}
V.~Dotsenko, V.~G\'{e}linas, and P.~Tamaroff.
\newblock Finite generation for {H}ochschild cohomology of {G}orenstein
  monomial algebras.
\newblock {\em Selecta Math. (N.S.)}, 29(1):Paper No. 14, 2023.

\bibitem{Eisenbud_1980}
D.~Eisenbud.
\newblock Homological algebra on a complete intersection, with an application
  to group representations.
\newblock {\em Trans. Amer. Math. Soc.}, 260(1):35--64, 1980.

\bibitem{Enochs-Jenda_1995_MathZ}
E.~E. Enochs and O.~M.~G. Jenda.
\newblock Gorenstein injective and projective modules.
\newblock {\em Math. Z.}, 220(4):611--633, 1995.

\bibitem{Erdmann-Skowronski_2015}
K.~Erdmann and A.~Skowro\'{n}ski.
\newblock The periodicity conjecture for blocks of group algebras.
\newblock {\em Colloq. Math.}, 138(2):283--294, 2015.

\bibitem{Gasharov-Peeva_1990}
V.~N. Gasharov and I.~V. Peeva.
\newblock Boundedness versus periodicity over commutative local rings.
\newblock {\em Trans. Amer. Math. Soc.}, 320(2):569--580, 1990.

\bibitem{Green-Snashall-Solberg_2003}
E.~L. Green, N.~Snashall, and \O. Solberg.
\newblock The {H}ochschild cohomology ring of a selfinjective algebra of finite
  representation type.
\newblock {\em Proc. Amer. Math. Soc.}, 131(11):3387--3393, 2003.

\bibitem{HenrikHolm04}
H.~Holm.
\newblock Gorenstein homological dimensions.
\newblock {\em J. Pure Appl. Algebra}, 189(1-3):167--193, 2004.

\bibitem{Iwanaga_1980}
Y.~Iwanaga.
\newblock On rings with finite self-injective dimension. {II}.
\newblock {\em Tsukuba J. Math.}, 4(1):107--113, 1980.

\bibitem{kupper2010two}
S.~K{\"u}pper.
\newblock {\em Two-sided {P}rojective {R}esolutions, {P}eriodicity and {L}ocal
  {A}lgebras}.
\newblock Logos Verlag Berlin, 2010.

\bibitem{Marczinzik_2019}
R.~Marczinzik.
\newblock On weakly {G}orenstein algebras.
\newblock (2019). arXiv:1908.04738.

\bibitem{Ringel-Zhang_2020}
C.~M. Ringel and P.~Zhang.
\newblock Gorenstein-projective and semi-{G}orenstein-projective modules.
\newblock {\em Algebra Number Theory}, 14(1):1--36, 2020.

\bibitem{Saito_2021}
S.~Saito.
\newblock Tilting objects in periodic triangulated categories.
\newblock (2021). arXiv:2011.14096v2.

\bibitem{Shen_2019}
D.~Shen.
\newblock A description of {G}orenstein projective modules over the tensor
  products of algebras.
\newblock {\em Comm. Algebra}, 47(7):2753--2765, 2019.

\bibitem{Skowronski-Yamagara_book_I}
A.~Skowro\'{n}ski and K.~Yamagata.
\newblock {\em Frobenius algebras. {I}}.
\newblock EMS Textbooks in Mathematics. European Mathematical Society (EMS),
  Z\"{u}rich, 2011.
\newblock Basic representation theory.

\bibitem{Smalo_2000}
S.~O. Smal\o.
\newblock Homological differences between finite and infinite dimensional
  representations of algebras.
\newblock In {\em Infinite length modules ({B}ielefeld, 1998)}, Trends Math.,
  pages 425--439. Birkh\"{a}user, Basel, 2000.

\bibitem{Usui21_No02}
S.~Usui.
\newblock Tate-{H}ochschild cohomology rings for eventually periodic
  {G}orenstein algebras.
\newblock {\em SUT J. Math.}, 57(2):133--146, 2021.

\bibitem{Usui_2022}
S.~Usui.
\newblock Characterization of eventually periodic modules in the singularity
  categories.
\newblock {\em J. Pure Appl. Algebra}, 226(12):Paper No. 107145, 2022.

\bibitem{Veliche_2006}
O.~Veliche.
\newblock Gorenstein projective dimension for complexes.
\newblock {\em Trans. Amer. Math. Soc.}, 358(3):1257--1283, 2006.

\bibitem{Wald-Waschbusch_1985}
B.~Wald and J.~Waschb{\" u}sch.
\newblock Tame biserial algebras.
\newblock {\em J. Algebra}, 95:480--500, 1985.

\bibitem{Witherspoon_book}
S.~J. Witherspoon.
\newblock {\em Hochschild cohomology for algebras}, volume 204 of {\em Graduate
  Studies in Mathematics}.
\newblock American Mathematical Society, Providence, RI, [2019] \copyright
  2019.

\bibitem{Yamagata_book_1996}
K.~Yamagata.
\newblock Frobenius algebras.
\newblock In {\em Handbook of algebra, {V}ol. 1}, volume~1 of {\em Handb.
  Algebr.}, pages 841--887. Elsevier/North-Holland, Amsterdam, 1996.

\bibitem{Zaks69}
A.~Zaks.
\newblock Injective dimension of semi-primary rings.
\newblock {\em J. Algebra}, 13:73--86, 1969.

\bibitem{Zimmermann-Huisgen_1992}
B.~Zimmermann-Huisgen.
\newblock Homological domino effects and the first finitistic dimension
  conjecture.
\newblock {\em Invent. Math.}, 108(2):369--383, 1992.

\end{thebibliography}

\end{document}